\DeclareFontFamily{OT1}{rsfs}{}
\DeclareFontShape{OT1}{rsfs}{n}{it}{<-> rsfs10}{}
\DeclareMathAlphabet{\mathscr}{OT1}{rsfs}{n}{it}
\begin{document}
\theoremstyle{plain}

\newtheorem{theorem}{Theorem}[section]
\newtheorem{thm}[equation]{Theorem}
\newtheorem{prop}[equation]{Proposition}
\newtheorem{corollary}[equation]{Corollary}
\newtheorem{conj}[equation]{Conjecture}
\newtheorem{lemma}[equation]{Lemma}
\newtheorem{definition}[equation]{Definition}
\newtheorem{question}[equation]{Question}

\theoremstyle{definition}
\newtheorem{conjecture}[theorem]{Conjecture}
\newtheorem{example}[equation]{Example}
\numberwithin{equation}{section}

\newtheorem{remark}[equation]{Remark}

\newcommand{\Hecke}{\mathcal{H}}
\newcommand{\Liea}{\mathfrak{a}}
\newcommand{\Cmg}{C_{\mathrm{mg}}}
\newcommand{\Cinftyumg}{C^{\infty}_{\mathrm{umg}}}
\newcommand{\Cfd}{C_{\mathrm{fd}}}
\newcommand{\Cinftyfd}{C^{\infty}_{\mathrm{ufd}}}
\newcommand{\sspace}{\Gamma \backslash G}
\newcommand{\PP}{\mathcal{P}}
\newcommand{\bfP}{\mathbf{P}}
\newcommand{\bfQ}{\mathbf{Q}}
\newcommand{\Siegel}{\mathfrak{S}}
\newcommand{\g}{\mathfrak{g}}
\newcommand{\A}{\mathbb{A}}
\newcommand{\la}{\mathbb{\langle}}
\newcommand{\ra}{\mathbb{\rangle}}
\def\G{{\rm G}}
\def\B{{\rm B}}
\def\T{{\rm T}}
\def\SL{{\rm SL}}
\def\PSL{{\rm PSL}}
\def\GSp{{\rm GSp}}
\def\der{{\rm der}}

\newcommand{\wT}{\widehat{\T}}
\newcommand{\wG}{\widehat{\G}}
\newcommand{\wB}{\widehat{\B}}
\newcommand{\wrho}{\widehat{\rho}}
\newcommand{\Q}{\mathbb{Q}}
\newcommand{\Gm}{\mathbb{G}_m}
\newcommand{\Nm}{\mathbb{N}m}
\newcommand{\ii}{\mathfrak{i}}
\newcommand{\II}{\mathfrak{I}}

\newcommand{\kk}{\mathfrak{k}}
\newcommand{\nn}{\mathfrak{n}}
\newcommand{\tF}{\tilde{F}}
\newcommand{\p}{\mathfrak{p}}
\newcommand{\m}{\mathfrak{m}}
\newcommand{\bb}{\mathfrak{b}}
\newcommand{\Ad}{{\rm Ad}\,}
\newcommand{\ttt}{\mathfrak{t}}
\newcommand{\frakt}{\mathfrak{t}}
\newcommand{\U}{\mathcal{U}}
\newcommand{\Z}{\mathbb{Z}}
\newcommand{\bfG}{\mathbf{G}}
\newcommand{\bfT}{\mathbf{T}}
\newcommand{\R}{\mathbb{R}}
\newcommand{\ST}{\mathbb{S}}
\newcommand{\h}{\mathfrak{h}}
\newcommand{\bC}{\mathbb{C}}
\newcommand{\C}{\mathbb{C}}
\newcommand{\F}{\mathbb{F}}
\newcommand{\N}{\mathbb{N}}
\newcommand{\qH}{\mathbb {H}}
\newcommand{\temp}{{\rm temp}}
\newcommand{\Hom}{{\rm Hom}}
\newcommand{\Aut}{{\rm Aut}}
\newcommand{\rk}{{\rm rk}}
\newcommand{\Ext}{{\rm Ext}}
\newcommand{\End}{{\rm End}\,}
\newcommand{\Ind}{{\rm Ind}}
\newcommand{\ind}{{\rm ind}}
\newcommand{\Irr}{{\rm Irr}}
\def\circG{{\,^\circ G}}
\def\M{{\rm M}}
\def\H{{\rm H}}
\def\SS{{\rm S}}
\def\ZZ{{\rm Z}}
\def\diag{{\rm diag}}
\def\Ad{{\rm Ad}}
\def\As{{\rm As}}
\def\wG{{\widehat \G}}

\def\PGSp{{\rm PGSp}}
\def\Sp{{\rm Sp}}
\def\St{{\rm St}}
\def\GU{{\rm GU}}
\def\SU{{\rm SU}}
\def\U{{\rm U}}
\def\GO{{\rm GO}}
\def\GL{{\rm GL}}
\def\PGL{{\rm PGL}}
\def\GSO{{\rm GSO}}
\def\GSpin{{\rm GSpin}}
\def\GSp{{\rm GSp}}

\def\Gal{{\rm Gal}}
\def\SO{{\rm SO}}
\def\O{{\rm  O}}
\def\Sym{{\rm Sym}}
\def\sym{{\rm sym}}
\def\St{{\rm St}}
\def\Sp{{\rm Sp}}
\def\tr{{\rm tr\,}}
\def\ad{{\rm ad\, }}
\def\Ad{{\rm Ad\, }}
\def\rank{{\rm rank\,}}

\subjclass{Primary 11F70; Secondary 22E55}

\title{Character theory at a torsion element}
\author{Santosh Nadimpalli, Santosha Pattanayak, Dipendra Prasad}
\date{\today}

\thanks{
  DP thanks  Science and Engineering research board of the
  Department of Science and Technology, India for its support
through the JC Bose
National Fellowship of the Govt. of India, project number JBR/2020/000006.
SN is supported by the MATRICS grant MTR/2023/000306 from SERB, Department of Science and Technology, India.}

\address{Indian Institute of Technology, Kanpur, 208016}
\email{nvrnsantosh@gmail.com}
\address{Indian Institute of Technology, Kanpur, 208016}\email{santoshcu2@gmail.com}
\address{D.P.: Indian Institute of Technology Bombay, Powai, Mumbai-400076} 

\email{prasad.dipendra@gmail.com}
\keywords{Branching laws, principal $\SL(2)$, Coxeter element, character theory, Weyl character formula}
\maketitle

{\hfill \today}
\begin{abstract}{The paper begins by studying  the restriction of an irreducible 
finite dimensional representation of a complex 
reductive algebraic group to its principal ${\rm SL}_2(\mathbb C)$. 
This allows one to calculate the character of an irreducible representation of $\G$ at any element of its principal ${\rm SL}_2(\mathbb C)$, in particular,
at {\it principal elements} of a maximal torus which operates on all simple
roots by the same scalar, giving us another
proof of a theorem of Kostant on  the character values at the Coxeter conjugacy class,
\cite{Kos76}, and more generally for any power of the Coxeter element. 

We prove that these principal elements of order
$m$ in the adjoint group 
have smallest dimensional centralizer among elements of order $m$ in the adjoint group.
Our main theorem on character values becomes sharper 
when these principal elements of order
$m$ in the adjoint group are the only elements up to conjugacy having the
smallest dimensional centralizer among elements of order $m$ in the adjoint group. This
turns out to be true for most groups if $m|h$, where $h$ is the Coxeter 
number of $G$, though for certain pairs $(\G,m)$ it fails.

We define a  group $\G(m)$ which is the dual group of the (connected component of identity of the) centralizer
of the principal element of order $m$ in the adjoint group of $\wG$, which plays an important role in this work.
Our main theorem on character values depends on
identifying a particular irreducible representation of the simply connected cover of $\G(m)^{\der}$,  the derived
subgroup of $\G(m)$,
actually only its dimension, of highest weight $\rho/m-\rho_m$
(restricted to the maximal torus
of $\G(m)^{\der}$ where it is integral),
involving half-sum of positive roots of $\G$ and $\G(m)$,
which in turn
depends on 
the
precise heights of the simple roots in the
centralizer
of the powers of the Coxeter element. 
A preliminary analysis due to us is completed by
Patrick Polo in  \cite{Polo}.
  }
  \end{abstract}

\tableofcontents

\part*{\centering
  {Part A: Restriction to principal \texorpdfstring{${\rm SL}_2$}{}}}
\section{Introduction}

Let $\G$ be a group and $\theta$ an element of finite order in $\G$.
A well-known principle which permeates representation theory, though one which cannot be made precise except in
some very specific cases,
relates character values, $\chi_\pi(\theta)$, of irreducible representations $\pi$ of $\G$ at
$\theta$ with the character theory of the centralizer $\G^\theta$ of $\theta$ in $\G$.
It is closely related to the ``orbit method'', or the ``co-adjoint orbit method''.
One instance of this relationship is already there in one form of the classical
Schur orthogonality relations according to which for finite groups,
\[ \sum_{\pi} |\chi_\pi(\theta)|^2 = |\G^\theta|,\] 
the sum taken over the irreducible representations of $\G$, which closely resembles,
\[ \sum_{\pi'} |\dim \pi'|^2 = |\G^\theta|,\]
now the sum taken over the irreducible representations $\pi'$ of $G'= \G^\theta$, suggestive of the
possibility that when $\chi_\pi(\theta)$ is nonzero, it is related to the dimension
of an irreducible representation $\pi'$ of $G'=\G^\theta$.

A character relationship of this kind for the compact group $\O_{2n}(\R)$ at the conjugacy class
of $\theta \in \O_{2n}(\R)$ which is a reflection through any hyperplane, is the content of the
paper \cite{KLP}. In the context of Weyl groups, say $S_{n}$, and where $\theta$ is any
involution without fixed points, is the content of the paper \cite{LP}.

In this paper, we study the character theory of a compact connected Lie group
$\G$, equivalently, that of a reductive algebraic group, at  specific torsion elements
of $\G$, and relate it to the dimension of a representation of a group $G'$ which is exactly as in
\cite{KLP}, not exactly $\G^\theta$ but the dual group of a centralizer in the dual group.

The torsion elements of $\G$ at which we study the character theory of $\G$ are the
elements of $\G$, often called the {\it principal elements}, which are captured
by the principal ${\rm SL}_2(\mathbb C)$ in $\G$. Fixing a pair $(\B(\C),\T(\C))$
consisting of a Borel subgroup, and a maximal torus in $\G$, principal elements of $\G$
are those elements of $\T$ which act on all the simple roots of $\T$ on $\B$ by the same scalar.

A crucial property of these principal elements, operating on all simple roots say by $e^{2\pi i/d}$
is that these are the most ``symmetrical'' elements of $\G$ in the sense that their centralizer
is of the smallest possible dimension among elements  of order $\leq d$ in the adjoint group of $\G$.
We prove this as a consequence
of character theory of $\G$ restricted to the principal $\SL_2(\C)$ in Proposition \ref{minimal},
by comparing the orders of zeros of the numerator and the denominator of the character formula restricted
to $\SL_2(\C)$ around these principal elements.
This still leaves the possibility that there might be other conjugacy classes
in $\G$ containing an element of order $\leq d$ in the adjoint group which might achieve the
minimal dimensional centralizer.

However, if $d$ divides the Coxeter number $h$ of $\G$, then up to a small number of exceptions, it is the
only conjugacy class in an adjoint group
represented by an element of order $d$ with  the smallest dimensional
centralizer. This property sharpens  our understanding of the character theory
at principal elements, and all of Part B of this work is devoted to its analysis which is done in a
case-by-case way. Although we have not tried doing it here, it would be nice to analyze the cases $d < h$ when the principal elements
of order $d$ in the adjoint group are the unique conjugacy classes (in the adjoint group)
achieving the minimal dimensional centralizer which for the case of $\GL_n(\C)$ happens
exactly when $n \equiv -1,0,1 \bmod d$.

As an example, among involutions $\theta$ in $\G$, there is exactly one conjugacy
class of involutions such that $\G^\theta$ has the minimal possible dimension,
which is this principal element of $\G$ of order 2 in the adjoint group, and which
for $\GL_n(\C)$ is the diagonal element
 $(1,-1,1,-1,\cdots, (-1)^{n-1})$.

This work is inspired by the papers \cite{Pr1}, \cite{AK},
which calculate
character values
at special elements for $\GL_{mn}(\C), \Sp_{2mn}(\C),$
  $\SO_{2mn+1}(\C), \SO_{2mn}(\C)$ which for $\GL_{mn}(\C)$ are of the form:
\[ \underline{t}\otimes c_m = (\underline{t},  \underline{t}\cdot \omega, \cdots, \underline{t}\cdot
  \omega^{m-1}),\]  
  where $ \underline{t}=(t_1,\cdots, t_n)$ is a diagonal matrix in $\GL_n(\C) $, $\omega = e^{2\pi i/m}$, and
  $c_m$ is the diagonal matrix in $\GL_m(\C)$ given by $c_m=(1,\omega,\omega^2,\cdots, \omega^{m-1})$.
  The papers \cite{Pr1}, \cite{AK}  obtain a product formula for characters of classical groups at the element
  $\underline{t}\otimes c_m$ in terms of characters of smaller classical groups.
  A particular case of their work,  for $ \underline{t}=(t_1,\cdots, t_n) = (1,\cdots, 1)$, thus at the element of $\GL_{mn}(\C)$ (and similarly for classical groups)
  \[(1,1,\cdots, 1, \omega,\omega, \cdots, \omega,\cdots, \omega^{m-1}\cdots, \omega^{m-1}),\]
  where $\omega = e^{2\pi i/m}$, and each repetition is $n$ times,
  is a consequence of our work here, giving character value at such elements as dimension of
  certain representations of certain classical groups which, as noted in \cite{AK}, are not necessarily subgroups
  of the group involved.

  It may be added that \cite{Pr1}, \cite{AK}, as well as the works \cite{KLP}, and \cite{LP}
  relate not only the character of representations of $\G$ at $\theta$ with dimension of representations of a group $\G'$
  related to $\G^\theta$, but relate characters at {\it all}  elements of $\G'$ with character at
  {\it certain} elements of $\G$, an
  issue which needs serious considerations, but on which we will have nothing to say in this paper.

A recent work of C. Karmakar, cf. \cite{CK}, calculates the character theory of all
classical groups $\G$, and $\G_2$, at {\it all} elements $\theta$ of order 2, and finds that though there is a relationship to the centralizer group $\G^\theta$, it is more tenuous than here unless $\theta$ is a principal element
of $\G$.
  
Here is a brief summary of this work. After setting up some notations and recalling some preliminaries
in Section \ref{two}, Section \ref{three}  proves a factorization theorem about characters of $\G$ restricted to the
principal $\SL_2(\C)$ in which we bring out the role of the dual group $\wG$ associated to $\G$. Section \ref{three} contains a
proof of Proposition \ref{minimal} regarding the principal elements of $\G$ of order $m$ in the adjoint group
having the smallest dimensional centralizer among all elements of $\G$ of order $\leq m$ in the adjoint group.

Section \ref{four} contains the main theorem of this work which is Theorem \ref{conj} which calculates
the character of an irreducible representation $\pi_\lambda$ of $\G$ at principal elements of $\G$ of order $m$ in the adjoint group,
up to a sign, and a constant $d_m$, 
as the dimension of an irreducible representation 
(explicitly defined in terms of the highest weight
$\lambda$)
of the simply connected cover of  $\G(m)^{\der}$, the derived subgroup of $\G(m)$,
where $\G(m)$ is the dual group of the centralizer
of the principal element of order $m$ in the adjoint group of $\wG$.
The constant $d_m$ is also the dimension
of an irreducible fundamental representation of the simply connected cover of $\G(m)^{\der}$
which is furthermore minuscule. We leave the exact value
of the sign to the statement of Theorem \ref{conj}.
This theorem gives another
proof of a theorem of Kostant on  the character values at the Coxeter conjugacy class,
\cite{Kos76}, and  more generally for any power of the Coxeter element.

Section \ref{constant}, analyses the constant $d_m$ for classical groups, and
proves that in several cases the constant $d_m =1$.

Section \ref{examples} summarizes the calculation of the group $\G(m)$ from Part B of this work, and Section \ref{G_2}
uses Theorem \ref{conj} to calculate the character value at the unique conjugacy class of order 2 in the
exceptional group $\G_2$.

Section \ref{general} poses a question, due to the third author, of certain situations when
$\Theta_\lambda(x_0)$ must be zero, where $x_0$ is an arbitrary torsion element of $\G$, 
not necessarily a principal element.

Section \ref{adjoint} calculates the character of the restriction of the adjoint representation
of $\G$ to its principal $\SL_2(\C)$.
Recall that there is a famous theorem of Kostant which expresses the adjoint representation of $\G$ as a sum
of $\ell$ irreducible representations of the principal $\SL_2(\C)$ where $\ell = $ rank of $[\G,\G]$.
On the other hand, Theorem \ref{conj} expresses the character of an irreducible representation
of $\G$ restricted to the principal $\SL_2(\C)$ as a product over the set of positive roots of $\G$.
Neither of these two theorems gives a hint that, in fact, the character of the adjoint representation of $\G$
restricted to the principal $\SL_2(\C)$ has the simple factorization given in Theorem \ref{Adjoint}
which has at most 3 factors of the form $(1\pm z^d)$! Perhaps there is another point of view for this theorem! We have also
not managed to find a uniform statement for all groups for Theorem \ref{Adjoint} which is
proved by a case-by-case analysis.

Section \ref{converse} deals with the question whether an irreducible representation
of $\G$ is determined (say up to automorphisms) by its restriction to the principal
$\SL_2(\C)$, in particular for the group $\SL_n(\C)$, where this question amounts to
a question in combinatorics: whether a finite set $X$ of integers is determined (up to translation and reflection)
by the multiset,
\[ X-X = \{ a-b| a,b \in X\}.\]
There does not seem to be a simple answer to this question! This question has more structure
than another question often asked about reductive groups $\G$: how does one identify
irreducible representations of $\G$ with the same dimension? Here, instead, we are asking to identify
irreducible representations of $\G$ with the same restriction to the principal $\SL_2(\C)$. (Actually, for several simple groups $\G$,  the restriction
of an irreducible representation of $\G$ to the principal $\SL_2(\C)$
determines the irreducible representation, see for instance, Proposition \ref{unique},
so this strategy does not help
to identify irreducible representations of these groups having the same dimension.)

The aim of Part B is to analyze when the centralizer of an element $x_d \in \G$ of order $d$
in the adjoint group has minimal possible dimension, hoping that the principal
element $C_d$ of order $d$ in the adjoint group is the unique one with this property. For $d|h$, the Coxeter
number, the only exceptions are in some cases of $D_n,E_6,E_7$ which we explicitly analyze by hand for
classical groups one-by-one, and using Kac coordinates for exceptional groups again one-by-one. This part
starts with Section \ref{Kac} which reviews the Kac coordinates following Serre's exposition in \cite{serre_kac}.

\section{Notation and Preliminaries} \label{two}

Let $\G$ be a reductive algebraic  group over $\C$ with a fixed pair $(\T,\B)$ consisting of a maximal torus $\T$
and a Borel subgroup $\B$ with $\T\subset \B\subset \G$.
Let $X^*(\T)$ be the character group of $\T$, and $X_*(\T)$ be the co-character group of $\T$, with the natural
perfect pairing, denoted as $\langle -,- \rangle$,  of free abelian groups:
\[ X_*(\T) \times  X^*(\T) \rightarrow \Z.\]

Let $\Phi$ be the set of roots of $\T$  in $\G$, $\Phi^+$ the set of positive roots of $\T$  in $\B$,
and $\{\alpha_1,\alpha_2, \cdots, \alpha_n\}$ be the set of positive simple roots of $\T$ inside $\B$.

For each root $\alpha \in  X^*(\T)$, let  $\alpha^\vee  \in  X_*(\T)$ be the corresponding coroot with
\[ \langle \alpha^\vee, \alpha \rangle = 2.\]

Associated to the triple $(\G,\B,\T)$, let  $(\wG,\wB,\wT)$ be the dual group. The dual group
comes equipped with an identification,
\begin{eqnarray*}
  X^*(\T) & \longrightarrow &   X_*(\wT) \\
  \lambda  & \longrightarrow &  \hat{\lambda}.
\end{eqnarray*}

For each root $\alpha \in  X^*(\T)$, and a coroot  $\beta^\vee  \in  X_*(\T)$, there is the 
corresponding coroot  $\hat{\alpha} \in  X_*(\wT)$, and the root  $\hat{\beta} ^\vee  = \widehat{\beta^\vee} \in  X^*(\wT)$,
with,
\[ \langle \beta^\vee , \alpha \rangle  = \langle \hat{\alpha} , \hat{\beta}^\vee \rangle   \in \Z.\]

More generally as $\alpha \rightarrow \hat{\alpha}$
defined for roots $\alpha$ of $\T$ in $\G$, extends to an isomorphism between
$X^*(\T)$ and $X_*(\wT)$ (unlike $\alpha \rightarrow \alpha^\vee$), 
for any weight $\lambda \in X^*(\T)$, with co-weight $\hat{\lambda} \in X_*(\wT)$, and a coroot $\beta^\vee \in X_*(\T)$, we have,

\[ \langle \beta^\vee , \lambda \rangle  = \langle \hat{\lambda} , \hat{\beta}^\vee \rangle   \in \Z.\]

Let $\omega_i \in  X^*(\T)\otimes \Q$ be the fundamental weights for the triple $(\G,\B,\T)$, defined by:

\[ \la \alpha_j^\vee, \omega_i \ra = \delta_{ij},\]

and let  $\omega^\vee_i$ be the fundamental co-weights for the triple $(\G,\B,\T)$, defined by:

\[ \la \omega_i^\vee, \alpha_j \ra = \delta_{ij}.\]

Let $\rho \in  X^*(\T)\otimes \Q $ be half-sum of positive roots  for the triple $(\G,\B,\T)$. One knows that,
\[ \rho = \sum \omega_i.\]

Similarly, let $\rho^\vee \in  X_*(\T)\otimes \Q$ be half-sum of positive coroots, for which we have:
\[ \rho^\vee = \sum \omega^\vee_i.\]

Thus, we will be using,

\vspace{2mm}

\begin{tabular}{l c l}
  $\rho$ & $\in$ &   $X^*(\T)\otimes \Q,$ \\
  $\rho^\vee$ & $\in$ &   $X_*(\T)\otimes \Q,$ \\
  $\hat{\rho}$ & $\in$ &  $X_*(\wT)\otimes \Q,$ \\
  $\hat{\rho}^\vee$ & $\in$ &  $X^*(\wT)\otimes \Q$.
\end{tabular}
\vspace{2mm}

In this, using the
identification of $X^*(\T)$ with $X_*(\wT)$, and  that of $X_*(\T)$ with $X^*(\wT)$, we may
not differentiate between $\rho$ and $\hat{\rho}$, and  between $\rho^\vee$ and  $\hat{\rho}^\vee$. More generally,
we may not differentiate between a  weight $\lambda \in X^*(\T)$ with the corresponding
co-weight $\hat{\lambda} \in X_*(\wT)$.

For any root $\alpha \in \Phi^+$, define the height of $\alpha$, denoted $h(\alpha)$, by $h(\alpha) = \sum a_i$,
if $\alpha = \sum a_i \alpha_i$. As, $\la \omega_i^\vee, \alpha_j \ra = \delta_{ij}$, and as
$\rho^\vee = \sum \omega^\vee_i,$ we have:
\[ h(\alpha)  = \la  {\rho}^\vee, \alpha \ra.\]

Using the dual root system, we also have the notion $h(\alpha^\vee)$, but note that we may not have
$h(\alpha^\vee) =  h(\alpha) $.

For a dominant integral character $\lambda \in X^*(\T)$, we let
 $\pi_\lambda$ denote the irreducible finite dimensional representation of $\G(\C)$ with highest weight  $\lambda = \sum k_i \omega_i \in  X^*(\T)$
with $k_i \geq 0$, $k_i \in \Q$, where $\omega_i$ are the fundamental weights.

For a connected reductive group $\G$, let $\G^{\der}$ be the derived subgroup of $\G$, and let $\ZZ^0(\G)$
be the connected component of identity of the center of $\G$. Then $\G = \ZZ^0(\G) \cdot \G^{\der}$ with
$\ZZ^0(\G) \cap \G^{\der}$ a finite abelian group. Any irreducible representation of $\G$ restricts to an irreducible representation of
$\G^{\der}$. Conversely, any irreducible representation of $\G^{\der}$ extends to an irreducible representation
of $\G$ (not uniquely unless $\ZZ^0(\G)$ is trivial). For questions being studied in this
paper about character theory at elements in $\G^{\der}$, we can without loss of generality, assume that $\G$
is semisimple, and actually to the case of $\G$ a connected simple group (by which we mean that $\G$ has no connected
normal subgroup).

For understanding the representation theory of a semisimple group, or their characters, it suffices to assume
that $\G$ is simply connected which is preferable to assume for certain analysis as centralizer
of semisimple elements are then connected reductive groups. However, for our work, we mostly consider  centralisers
in $\wG$, and thus $\G$ being an adjoint group is better. On the other hand, it is not good to restrict to adjoint
groups $\G$, as then we are analyzing only certain representations of a general $\G$ (those with trivial
central character). For most of our work, we will deal with {\it all} simple groups.
Note also that the question of the dimension of the centraliser of elements is the same in an isogeny class
of groups, thus for the analysis done in Part B of this paper, it does not matter
which group in the isogeny class we choose to analyse, and thus either we deal with classical groups or with
the adjoint exceptional groups.

We will use $h = h(\G)$ to denote the Coxeter number of $\G$ (to be used only when $\G$ is a simple group).
Recall that if $\G$ is simple with a given pair $(\T \subset \B)$ consisting on a maximal torus $\T$ contained in a Borel subgroup $\B$ in $\G$, then the Coxeter
conjugacy class, denoted by $C_h$,
is represented by any element of $T$ which operates on all simple roots
of $\T$ in $\B$ by $e^{2 \pi i/h}$. All such elements of $\T$ differ from each other by an element of the center of $\G$, and are all in one conjugacy class of $\G$.
The order of any element in $C_h$ is either $h$ or $2h$
(and is $h$ if $\G$ is adjoint). For $m|h$, let $C_m = C_h^{h/m}$ which
is represented by any element of $T$ which operates on all simple roots
of $\T$ in $\B$ by $e^{2 \pi i/m}$.

\section{Restriction to the Principal ${\rm SL}_2(\mathbb C)$: the product formula} \label{three}
Let $\G$ be a connected complex reductive group with Lie
algebra $\mathfrak{g}$, and $W$ its Weyl group. 
Let $\T$ be a maximal torus and let 
$\B$ be a Borel subgroup of $\G$ with $\T\subset \B$. 
Let $\Phi$ be the set of roots of $\G$ with respect to 
$\T$, and let $\Phi^+$ be the set of positive 
roots of $\Phi$ contained in $\B$.
We denote by $\Delta$ the set of simple roots in $\Phi^+$. 
Let $X^\ast(\T)$ and  $X_\ast(\T)$
be the root lattice and co-root lattice of $\T$ respectively.
Let $\rho^\vee$ be half-sum of positive co-roots. It is to be noted
that  $\rho^\vee$ may not be a co-character of $T$, but  $2\rho^\vee$ is always.
However,  $\rho^\vee(z) =2\rho^\vee(\sqrt{z}) $, $z \in \C^\times$, is a meaningful element
of $\T(\C)$ up to the central element $2\rho^\vee(-1)$ in $\G$.

There exists a homomorphism 
\begin{equation}\label{principal}
 \psi = \psi_{\G}: {\rm SL}_2(\mathbb C)\rightarrow \G  
\end{equation}
which takes  a regular unipotent in $\SL_2(\C)$ to a regular unipotent element
in $\G$. Such a homomorphism is unique up to conjugacy, and 
 is called the
 principal $\SL_2(\C)$ in $\G$. We assume  that $\psi: {\rm SL}_2(\mathbb C)\rightarrow \G  $ takes the diagonal torus in $\SL_2(\C)$ to $\T$, and denote
 the corresponding map from $\Gm$ to $\T$, or to $\G$, also by $\psi$.

 The following property of  $\psi$ is essential to us:
\[ \psi = 2\rho^\vee:\mathbb{G}_m\rightarrow \T,\] 
equivalently, the action of $z \in \C^\times = \mathbb{G}_m(\C)$ on all the simple roots
of $\T$ in $\B$ is via $z \to z^2$. An element $t \in \T$ acting on all the simple roots by the same scalar is called a {\it principal element} for $(\G,\B,\T)$, and each such element (up to a central element) arises as $\psi(z)$ for $z \in \C^\times$.  The element in $\T$
\[ C_d = \psi(e^{\pi i/d}) = \rho^\vee(e^{2\pi i/d})=  (2\rho^\vee)(e^{2\pi i/2d}),\] which acts on all the simple roots by $\omega_d = e^{2\pi i/d}$ will be denoted by $C_d$. Observe that $C_d$ is well-defined up to conjugacy in $\G$, and that not all
principal elements of $\T$  for $(\G,\B,\T)$ which act on all simple roots by $e^{2\pi i/d}$ may belong to the conjugacy class
of $C_d$ because of the center of $\G$.

As $\rho^\vee$ is not necessarily a cocharacter, but
$2\rho^\vee$ is, the equality, $\rho^\vee(e^{2\pi i/d}) =   (2\rho^\vee)(e^{2\pi i/2d})$ should be taken as the definition of   $\rho^\vee(e^{2\pi i/d})$.

The following integrality statement will be useful to have for many of our assertions.

\begin{lemma}\label{integral}
  Let $\G$ be a connected reductive group, with a maximal torus $\T$ contained in a Borel subgroup $\B$ of $\G$,
  and the associated root datum. Let $\rho$ be half the sum of positive roots of $\T$ on $\B$, thus
  $\rho \in X^*(\T) \otimes \Q$.
  For any root $\alpha$ of $\T$ with coroot $\alpha^\vee$, we have the integrality statement,
  \[ \langle\rho, \alpha^\vee\rangle \in \Z.\]
  Similarly, if $\rho^\vee$ is half the sum of positive coroots, then,
  \[ \langle\rho^\vee , \alpha\rangle \in \Z.\]

\end{lemma}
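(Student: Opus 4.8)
The plan is to prove the classical fact that $\langle \rho, \alpha^\vee \rangle \in \Z$ for every root $\alpha$, and then obtain the dual statement by applying the same argument to the dual root datum $(\wG, \wB, \wT)$, under which $\rho^\vee$ for $\G$ corresponds to $\widehat{\rho}$ for $\wG$ and a root $\alpha$ corresponds to a coroot. So the whole content is in the first assertion.

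First I would reduce to the case where $\alpha$ is a simple root. Since the Weyl group $W$ acts transitively on roots of a given length within each irreducible component, and $W$ permutes $\rho$ only up to subtracting sums of simple roots (indeed $s_i \rho = \rho - \alpha_i$), a short computation shows that for $w \in W$, $\langle \rho, (w\alpha)^\vee\rangle = \langle w^{-1}\rho, \alpha^\vee\rangle = \langle \rho, \alpha^\vee\rangle - \langle \mu, \alpha^\vee\rangle$ where $\mu \in X^*(\T)$ is an integral weight (a $\Z$-combination of simple roots); hence $\langle \rho, (w\alpha)^\vee \rangle \equiv \langle \rho, \alpha^\vee\rangle \pmod{\Z}$. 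Thus it suffices to check the claim for one root in each $W$-orbit, and in particular for simple roots.

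Next, for a simple root $\alpha = \alpha_i$, the defining property $\rho = \sum_j \omega_j$ together with $\langle \alpha_i^\vee, \omega_j\rangle = \delta_{ij}$ gives $\langle \rho, \alpha_i^\vee\rangle = \langle \alpha_i^\vee, \rho \rangle = 1 \in \Z$ directly. (Equivalently, one may invoke that $\rho$ is a dominant weight of the simply connected cover and hence $\langle \alpha_i^\vee, \rho\rangle$ is a nonnegative integer.) Combining with the previous paragraph, $\langle \rho, \alpha^\vee\rangle \in \Z$ for all $\alpha \in \Phi$. Finally, passing to the dual root datum: the isomorphism $X^*(\T) \cong X_*(\wT)$ carries $\rho$ to $\widehat{\rho}$, carries the coroot $\alpha^\vee$ of $\G$ to a coroot of $\wG$ only in the reversed sense — more precisely, roots of $\wT$ in $\wG$ are the coroots $\widehat{\beta^\vee}$ and coroots are the $\widehat{\beta}$ — so the already-proved statement for $\wG$ reads $\langle \widehat{\rho}, \widehat{\alpha} \rangle \in \Z$ for all roots $\alpha$ of $\T$, i.e. $\langle \rho^\vee, \alpha \rangle \in \Z$ after translating back via the pairings $\langle \beta^\vee, \lambda\rangle = \langle \widehat{\lambda}, \widehat{\beta}^\vee\rangle$ recalled in Section \ref{two}.

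The only mild obstacle is bookkeeping: making sure the duality dictionary is applied correctly so that ``half-sum of positive coroots of $\G$'' really is ``half-sum of positive roots of $\wG$'' and that the pairing identities line up; once that is set up, there is no analytic or combinatorial difficulty. An alternative that avoids the dual group entirely is to run the reduction-to-simple-roots argument verbatim on the coroot side, using $s_i \rho^\vee = \rho^\vee - \alpha_i^\vee$ and $\langle \rho^\vee, \alpha_i\rangle = \langle \omega_i^\vee, \alpha_i\rangle = 1$; I would likely present this self-contained version rather than invoking duality, as it is shorter and uses only the definitions of $\rho, \rho^\vee, \omega_i, \omega_i^\vee$ already recorded above.
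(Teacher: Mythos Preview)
Your proof is correct, but the route differs from the paper's. The paper argues as follows: if $\G$ is simply connected then $\rho \in X^*(\T)$, so $\langle \rho, \alpha^\vee\rangle \in \Z$ is automatic; since the pairing $\langle \rho, \alpha^\vee\rangle$ is invariant under isogeny, the general reductive case follows; the statement for $\rho^\vee$ is then deduced from the dual group. You instead stay inside the given group and reduce to simple roots via the Weyl group (using that $w^{-1}\rho - \rho$ lies in the root lattice, hence pairs integrally with every coroot), then compute $\langle \rho, \alpha_i^\vee\rangle = 1$ directly from $\rho = \sum_j \omega_j$. Both arguments are short and standard; yours is slightly more elementary in that it avoids invoking the simply connected cover and isogeny, while the paper's is a touch more conceptual. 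Your suggested self-contained variant for $\rho^\vee$ (running the identical reduction with $s_i\rho^\vee = \rho^\vee - \alpha_i^\vee$ and $\langle \rho^\vee, \alpha_i\rangle = 1$) is indeed cleaner than threading through the duality dictionary.
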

\begin{proof}
  If $\G$ is simply connected , then the integrality of $\langle\rho, \alpha^\vee\rangle$ is obvious as in that case
  $\rho \in X^*(\T)$. As the pairing  $\langle\rho, \alpha^\vee\rangle$ is invariant under an isogeny,
  the proof follows for general reductive groups. The second part of the Lemma
 about the integrality of $ \langle\rho^\vee , \alpha\rangle$ follows from the first part using the dual group.
  \end{proof}

If $h$ is the Coxeter number of $\G$, $C_h$ will be called the Coxeter conjugacy class of $\G$.  Calculation of the characters of highest weight
irreducible representations of $\G$ at the elements $C_d$ is the principal aim of this paper. Towards this aim, we begin by calculating
characters at all elements of $\psi(\C^\times)$. The following theorem is available in one form or the other in many sources, and as we were finalizing this paper, we noticed the preprint \cite{Serre},
in which Theorem 2.8 has exactly this statement, and contains some historical remarks too. However, for the sake of completeness, we have decided to keep our proof.

\begin{theorem}\label{prod}
    Let $\G$ be a connected reductive group 
and let $\psi$ be a principal homomorphism as in 
\eqref{principal}. Let $\lambda\in X^\ast(\T)$ be 
a dominant weight and let $(\pi_\lambda, V(\lambda))$ be the 
irreducible representation of $\G$ with highest weight 
$\lambda$, $\Theta_\lambda$ its character, and $\Theta_\lambda(z)$ the character at the diagonal element
$(z,z^{-1})$ of  ${\rm SL}_2(\mathbb C)$ treated as an element of $\G$.  Then we have 
\begin{equation}\label{productformula}
    \Theta_\lambda(z)=
z^{-2\langle\lambda, \rho^\vee\rangle}
\dfrac{\prod_{\alpha\in \Phi^+}(1-z^{2\langle\lambda+\rho, \alpha^\vee\rangle})}{
  \prod_{\alpha\in \Phi^+}(1-z^{2\langle\rho, \alpha^\vee\rangle})},\end{equation}
where by Lemma \ref{integral}, the polynomials in $z$ inside the two products are polynomials in $z^2$, whereas
$z^{-2\langle\lambda, \rho^\vee\rangle}$ may be of odd degree.
\end{theorem}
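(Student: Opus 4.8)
The plan is to apply the Weyl character formula and then restrict everything from the maximal torus $\T$ to the one-parameter subgroup $\psi(\Gm) = 2\rho^\vee(\Gm)$. Recall that for a dominant weight $\lambda$, the Weyl character formula expresses $\Theta_\lambda$ as a ratio of alternating sums,
\[
\Theta_\lambda = \frac{\sum_{w \in W} (-1)^{\ell(w)} e^{w(\lambda + \rho)}}{\sum_{w \in W} (-1)^{\ell(w)} e^{w\rho}},
\]
and the denominator is the Weyl denominator $\prod_{\alpha \in \Phi^+}(e^{\alpha/2} - e^{-\alpha/2})$. Evaluating a character $e^\mu \in X^*(\T)$ at the element $\psi(z)$ amounts to computing $z^{\langle \mu, \psi \rangle}$ where I identify $\psi$ with the cocharacter $2\rho^\vee$; thus $e^\mu(\psi(z)) = z^{\langle \mu, 2\rho^\vee\rangle} = z^{2\langle \mu, \rho^\vee\rangle}$. (One has to be slightly careful that $\rho^\vee$ is only a cocharacter after doubling, but the combination $e^{\alpha/2}$ appearing in the Weyl denominator has $\langle \alpha/2, 2\rho^\vee\rangle = \langle \alpha, \rho^\vee\rangle = h(\alpha) \in \Z$ by Lemma~\ref{integral}, so each factor is a genuine Laurent monomial in $z$; similarly $\langle \lambda, \rho^\vee\rangle$ and $\langle \rho, \rho^\vee\rangle$ are where the possible half-integer — i.e. odd power of $z$ — enters via $\lambda$, matching the claim in the statement.)

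First I would handle the denominator: applying the product form $\prod_{\alpha \in \Phi^+}(e^{\alpha/2} - e^{-\alpha/2})$ of the Weyl denominator and evaluating at $\psi(z)$ gives $\prod_{\alpha \in \Phi^+}(z^{\langle \alpha, \rho^\vee\rangle} - z^{-\langle\alpha,\rho^\vee\rangle})$. Factoring out $z^{-\langle\alpha,\rho^\vee\rangle}$ from each factor produces $z^{-\langle 2\rho,\rho^\vee\rangle} \prod_{\alpha\in\Phi^+}(z^{2\langle\alpha,\rho^\vee\rangle} - 1)$; up to the overall sign $(-1)^{|\Phi^+|}$ this is $z^{-2\langle\rho,\rho^\vee\rangle}\prod_{\alpha\in\Phi^+}(1 - z^{2\langle\rho,\alpha^\vee\rangle})$ after also using $\langle\alpha,\rho^\vee\rangle = \langle\rho^\vee,\alpha\rangle$. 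Wait — I must be careful to match $\langle\rho,\alpha^\vee\rangle$ versus $\langle\alpha,\rho^\vee\rangle$; in general these differ, so I should instead apply the same Weyl-denominator identity to the numerator rather than to the denominator and keep the exponents bookkept consistently. Concretely: the numerator of the Weyl character formula for $\lambda$ is $\prod_{\alpha\in\Phi^+}(e^{\alpha/2}-e^{-\alpha/2})$ applied after the substitution that makes $\lambda+\rho$ play the role of $\rho$ — this is precisely the Weyl dimension-formula manipulation — but evaluated at $\psi(z)$ the numerator $\sum_w (-1)^{\ell(w)} e^{w(\lambda+\rho)}(\psi(z))$ does not factor directly. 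The correct route is: the full ratio $\Theta_\lambda(\psi(z))$ is the specialization of the Weyl character formula, and the classical fact (Weyl dimension formula à la Weyl, obtained by the substitution $e^\mu \mapsto t^{\langle\mu, \rho^\vee\rangle}$ followed by $t \to 1$) generalizes: specializing at $e^\mu \mapsto z^{2\langle\mu,\rho^\vee\rangle}$ turns the numerator $\sum_w(-1)^{\ell(w)}e^{w(\lambda+\rho)}$ into $\sum_w (-1)^{\ell(w)} z^{2\langle w(\lambda+\rho),\rho^\vee\rangle} = \sum_w (-1)^{\ell(w)} z^{2\langle\lambda+\rho, w^{-1}\rho^\vee\rangle}$, and then I recognize this alternating sum over $W$ acting on $\rho^\vee$ as a Weyl-denominator-type sum on the dual side, giving $\pm z^{\text{(shift)}}\prod_{\alpha\in\Phi^+}(1 - z^{2\langle\lambda+\rho,\alpha^\vee\rangle})$.

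So the key steps, in order: (1) write down the Weyl character formula for $\Theta_\lambda$; (2) evaluate numerator and denominator at $\psi(z) = 2\rho^\vee(z)$, turning each alternating sum $\sum_{w}(-1)^{\ell(w)}e^{w\nu}$ into $\sum_w (-1)^{\ell(w)} z^{2\langle\nu, w^{-1}\rho^\vee\rangle}$; (3) recognize each such sum, via the Weyl denominator identity applied on the \emph{dual} root system with $\rho^\vee$ in the role of ``$\rho$'' and $\nu$ in the role of the torus variable, as $z^{2\langle\nu,\rho^\vee\rangle}\prod_{\alpha\in\Phi^+}(1 - z^{-2\langle\nu,\alpha^\vee\rangle})$ up to sign — here I use that $w^{-1}\rho^\vee$ ranges over the $W$-orbit of $\rho^\vee$ and $\langle\nu, w^{-1}\rho^\vee\rangle = \langle w\nu, \rho^\vee\rangle$, and that $\alpha^\vee$ are the roots of the dual system; (4) take the ratio for $\nu = \lambda+\rho$ over $\nu = \rho$, watch the prefactors $z^{2\langle\lambda+\rho,\rho^\vee\rangle}/z^{2\langle\rho,\rho^\vee\rangle} = z^{2\langle\lambda,\rho^\vee\rangle}$ cancel against the claimed $z^{-2\langle\lambda,\rho^\vee\rangle}$ (modulo a sign I track carefully), and check the signs $(-1)^{|\Phi^+|}$ in numerator and denominator cancel; (5) invoke Lemma~\ref{integral} for the integrality remarks. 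The main obstacle is step (3): correctly identifying the specialized alternating sum as a product over $\Phi^+$ requires the dual-group incarnation of the Weyl denominator formula and careful attention to which pairing ($\langle\rho,\alpha^\vee\rangle$ or $\langle\rho^\vee,\alpha\rangle$, which need not agree) appears where — this is exactly the place where the distinction between $\G$ and $\wG$ emphasized in Section~\ref{two} does real work, and where the parity/half-integer subtlety flagged in the statement must be reconciled.
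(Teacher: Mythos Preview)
Your approach is essentially identical to the paper's: write the Weyl numerator evaluated at $\psi(z)$ as $\sum_w(-1)^{\ell(w)}z^{2\langle\lambda+\rho,\,w\rho^\vee\rangle}$, recognize this as the Weyl denominator for the \emph{dual} root system evaluated at the element $t_\lambda=(\lambda+\rho)(z^2)\in\wT$, and divide by the $\lambda=0$ case. The only imprecision is your final bookkeeping in step~(4): the prefactor $z^{2\langle\lambda,\rho^\vee\rangle}$ does not ``cancel against'' $z^{-2\langle\lambda,\rho^\vee\rangle}$ --- rather, what you have derived is the stated formula with $z$ replaced by $z^{-1}$, and one then invokes $\Theta_\lambda(z)=\Theta_\lambda(z^{-1})$, exactly as the paper does in its last line.
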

\begin{proof}
    We are interested in the restriction of $(\pi_\lambda, V_\lambda)$
    to the group $\psi({\rm SL}_2(\mathbb{C}))$.
    Setting $t=\rho^\vee(z^2)$, for $z\in \mathbb{C}$, we write the Weyl numerator for the
    representation $\pi_\lambda$ at the element  $t=\rho^\vee(z^2)$, as
\begin{eqnarray*} \sum_{w\in W}(-1)^{l(w)}w(\lambda+\rho)(\rho^\vee(z^2)) & = &
\sum_{w\in W}(-1)^{l(w)}z^{2\langle 
  \lambda+\rho, w\rho^\vee\rangle}, \\
& = & \sum_{w\in W}(-1)^{l(w)}t_\lambda^{w\rho^\vee},  ~~~~~~~~~{\rm where} ~~~ t_\lambda=(\lambda+\rho)(z^2) \in \widehat{\T},\\
& = & t_\lambda^{\rho^\vee}
\prod_{\alpha\in \Phi^+}(1-t_\lambda^{-\alpha^\vee}), \\
& = &
z^{2\langle\lambda+\rho, \rho^\vee\rangle}
\prod_{\alpha\in \Phi^+}(1-z^{-2\langle \lambda+\rho, \alpha^\vee\rangle}),
\end{eqnarray*}
where the third equality is by the Weyl denominator formula
for the dual root system; note that in the LHS of the first equality,
$(\lambda+\rho) \in X^*(\T) \otimes \Q$, whereas in the RHS of the second equality,
$(\lambda+\rho) \in X_*(\wT) \otimes \Q$ which in the notation of the previous section
would have been written as $\hat{\lambda} + \hat{\rho}$, but we are not going to be
careful about displaying the difference between $\lambda \in X^*(\T)$ and
$\hat{\lambda} \in X_*(\wT)$. Note also that for all $t \in \wT$, $\alpha^\vee
\in X_*(\T) = X_*(\wT)$, $t^{\alpha^\vee} = \alpha^\vee(t) \in \C^\times.$

As the Weyl denominator is nothing but the Weyl numerator for $\lambda =0$, and since
$\Theta_\lambda(z)=\Theta_\lambda(z^{-1})$, we get that 
\[ \Theta_\lambda (z)
=z^{-2\langle\lambda, \rho^\vee\rangle}
\dfrac{\prod_{\alpha\in \Phi^+}(1-z^{2\langle \lambda+\rho, \alpha^\vee\rangle})}
{\prod_{\alpha\in \Phi^+}(1-z^{2\langle \rho, \alpha^\vee\rangle})},\]
proving the theorem.\end{proof}

\vspace{2mm}

\begin{remark}
    Recall that
  the Weyl character formula at an element $t \in \T$
  for the irreducible representation $\pi_\lambda$ of $\G$
  where $\lambda: \T \rightarrow \C^\times$ is a dominant weight for $(\G,\B,\T)$,
  is a quotient of the Weyl numerator by the Weyl denominator. The Weyl
  numerator depends on the pair $(t, \lambda)$, whereas Weyl denominator
  depends only on $t$, and is expressible as a product over positive roots
  of $\T$ in $\B$. In the proof above, the Weyl numerator for $(\G,\B,\T)$
  on the pair $(t, \lambda)$, for special choice of $t\in \T$ (which arise from the image of the principal
  $\SL_2(\C)$) becomes the  Weyl denominator for
  $(\wG,\wB,\wT)$ at special elements of $\wT$ which are constructed using $\lambda$. This explains the
  product formula. This also explains the presence of the dual group in this work.
   \end{remark}

We now draw some corollaries. The first and the most 
apparent is the Weyl dimension formula. For this, and for
other applications too, notice that
\[ \lim_{z\rightarrow 1} \frac{(1-z^n)}{(1-z)} = n.\]
This proves the Weyl dimension formula immediately.

As another corollary, note the following consequence of the product formula,
cf. Problem on page 520 of Serre's Collected works, volume III, which in the notes added
at the end of the Collected works on page 714, Serre says that it has been solved by V. Kac, cf.
Corollary 3.5 of \cite{Kac}.

\begin{corollary}
  Let $\G$ be any of the exceptional groups (which all have the property that
  for $h$ the Coxeter number, $h+1$ is a prime). Let $\pi$ be a non-minuscule
  fundamental representation of $\G$. Let $C_{h+1}$ be an element of a maximal
  torus $\T \subset \G$ acting by $e^{2 \pi i/(h+1)}$ on all the simple roots. Then
  the character of $\pi$ at $C_{h+1}$ is zero.
  \end{corollary}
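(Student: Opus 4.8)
The plan is to apply the product formula \eqref{productformula} of Theorem \ref{prod} to the element $C_{h+1} = \psi(e^{\pi i/(h+1)})$, i.e.\ set $z = e^{\pi i/(h+1)}$, so that $z^{2} = e^{2\pi i/(h+1)}$ is a primitive $(h+1)$-st root of unity (here $h+1$ is prime, so every nontrivial power is primitive). With $\pi = \pi_{\omega_k}$ a fundamental representation, we have $\lambda = \omega_k$, and the numerator of \eqref{productformula} becomes $\prod_{\alpha \in \Phi^+}\bigl(1 - (z^2)^{\langle \omega_k + \rho,\,\alpha^\vee\rangle}\bigr)$ while the denominator is $\prod_{\alpha\in\Phi^+}\bigl(1 - (z^2)^{\langle\rho,\alpha^\vee\rangle}\bigr) = \prod_{\alpha\in\Phi^+}\bigl(1-(z^2)^{h(\alpha^\vee)}\bigr)$. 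Since $z^2$ is a primitive $(h+1)$-st root of unity, a factor $1 - (z^2)^m$ vanishes precisely when $(h+1)\mid m$. So I would count the number $N$ of positive roots $\alpha$ with $(h+1)\mid \langle\omega_k+\rho,\alpha^\vee\rangle$ (the order of vanishing of the numerator) and the number $D$ of positive roots with $(h+1)\mid h(\alpha^\vee)$ (the order of vanishing of the denominator), and show $N > D$; then $\Theta_{\omega_k}(C_{h+1}) = 0$.

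First I would handle the denominator: the coroot heights $h(\alpha^\vee)$ for $\alpha\in\Phi^+$ run through the exponents of the dual root system, and for the exceptional groups (all simply laced except $F_4$ and $G_2$, and for those the dual root system has the same Coxeter number) the coroot heights range in $\{1,2,\dots,h-1\}$, so $(h+1)\mid h(\alpha^\vee)$ never happens and $D = 0$. Thus it suffices to exhibit a single positive root $\alpha$ with $(h+1)\mid\langle\omega_k+\rho,\alpha^\vee\rangle$, i.e.\ $\langle\rho,\alpha^\vee\rangle + \langle\omega_k,\alpha^\vee\rangle \equiv 0 \pmod{h+1}$. Taking $\alpha = \tilde\alpha$ the highest root, $\langle\rho,\tilde\alpha^\vee\rangle = h - 1$, so I need $\langle\omega_k,\tilde\alpha^\vee\rangle \equiv 2 \pmod{h+1}$; but $\langle\omega_k,\tilde\alpha^\vee\rangle = a_k^\vee$ is the coefficient of $\alpha_k$ in $\tilde\alpha$ written in the coroot basis (the mark / dual Kac label), which lies between $1$ and $h-1$ and in fact the relevant identity $1 + \sum a_i^\vee = h$ forces these to be "small". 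So I would instead argue more cleverly: since $\pi_{\omega_k}$ is \emph{not} minuscule, some weight $\mu$ of $\pi_{\omega_k}$ satisfies $\langle\mu,\alpha^\vee\rangle \geq 2$ for some positive root, equivalently $\omega_k + \rho$ is not "small" relative to all coroots, and one wants to deduce that $\langle\omega_k+\rho,\alpha^\vee\rangle$ achieves a value in the residue class $0 \bmod (h+1)$ for at least one $\alpha$; the cleanest route is Kac's Corollary 3.5 of \cite{Kac}, or a direct check that for each exceptional $\G$ and each non-minuscule fundamental $\omega_k$ there is an explicit positive root doing the job, using the tables of positive roots.

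The main obstacle is the last step: showing the numerator actually vanishes, i.e.\ producing the positive coroot $\alpha^\vee$ with $\langle\omega_k+\rho,\alpha^\vee\rangle$ divisible by $h+1$. The bound $1 \le \langle\rho,\alpha^\vee\rangle \le h-1$ means $\langle\rho,\alpha^\vee\rangle$ alone never hits $0 \bmod(h+1)$ (this is exactly the minuscule case $\lambda=0$ giving the Weyl denominator, nonzero at a regular element), so the contribution of $\omega_k$ is essential, and one must use non-minusculity quantitatively. I expect the honest argument to invoke \cite{Kac}: Kac's result classifies, for $\G$ simple and $z^2$ a root of unity of order $h+1$, exactly which $\pi_{\omega_k}$ have nonzero character at such an element, and the answer for exceptional groups is precisely the minuscule ones together with the trivial representation. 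I would therefore structure the proof as: (i) reduce via \eqref{productformula} to the vanishing/non-vanishing count as above; (ii) note $D=0$; (iii) cite \cite{Kac}, Corollary 3.5, for the assertion that $N \ge 1$ for every non-minuscule fundamental $\omega_k$ — or, if a self-contained proof is wanted, verify (iii) for $G_2, F_4, E_6, E_7, E_8$ one at a time from the list of marks, checking that among $\{\langle\rho,\alpha^\vee\rangle + \langle\omega_k,\alpha^\vee\rangle : \alpha\in\Phi^+\}$ the value $h+1$ itself occurs. A convenient uniform observation that may shortcut the case check: the maximum of $\langle\omega_k+\rho,\alpha^\vee\rangle$ over $\alpha\in\Phi^+$ is $\langle\omega_k,\tilde\alpha^\vee\rangle + (h-1)$, and for non-minuscule $\omega_k$ this dual mark $a_k^\vee \ge 2$, giving a maximum $\ge h+1$; combined with the fact that the set of these pairings is an interval of integers containing $h-1$ and $h$ and closing up under the root order, one gets that $h+1$ is attained — this is the step I would want to make rigorous, and it is where Polo-style or Kac-style input is the safety net.
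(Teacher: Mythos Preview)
Your proposal is correct and matches the paper's approach: the paper gives no proof beyond pointing to the product formula of Theorem~\ref{prod} and citing Kac~\cite{Kac}, Corollary~3.5, and you do the same while spelling out the mechanism (coroot heights lie in $\{1,\dots,h-1\}$, so $D=0$, and non-minusculity forces $N\geq 1$). One small slip in your attempted uniform argument: the maximum of $\langle\omega_k+\rho,\alpha^\vee\rangle$ over $\alpha\in\Phi^+$ is attained at the highest \emph{coroot} (the coroot of the highest short root), not at $\tilde\alpha^\vee$---for $G_2$ and $F_4$ these differ and $\langle\rho,\tilde\alpha^\vee\rangle\neq h-1$---but with that correction your observation that the maximum equals $h-1+c_k$ with $c_k\geq 2$ for non-minuscule $\omega_k$ stands, and indeed when $c_k=2$ the value $h+1$ is hit on the nose, leaving only the higher-mark nodes for the case check or the appeal to~\cite{Kac}.
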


\vspace{2mm}
As we evaluate the product formula, the following lemma will be useful.

\begin{lemma}\label{c-term}
  For an integer $m\geq 1$,  let $z = e^{2\pi i/m} + \epsilon$ with $|\epsilon|< 1$, a complex number.  
  For an integer $d$ which may be either positive or negative, let $f_d(z)=1-z^d$. Then $f_d(z)$ is
  an analytic function in the domain $|\epsilon|<1$  
with nonzero constant term $(1-e^{2d\pi i/m})$ if $m$ does not divide $ d$. If $m | d$,
$f_d(z)$ has zero constant term and with nonzero first  term   $-de^{-2\pi i/m} \epsilon$.
If $d \geq 0$, then  $f_d(z)$ is a polynomial in
$\epsilon$.
\end{lemma}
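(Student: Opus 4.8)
The plan is to set $\zeta = e^{2\pi i/m}$, so that $z = \zeta + \epsilon$ with $|\zeta| = 1$ and $|\epsilon| < 1$, and to treat the cases $d \geq 0$ and $d < 0$ separately: for $d \geq 0$ the function $f_d$ is literally a polynomial in $\epsilon$, whereas for $d < 0$ one must first check that $z^d$ even makes sense and is analytic on the disc.

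\textbf{Case $d \geq 0$.} Here I would just apply the binomial theorem: $z^d = (\zeta+\epsilon)^d = \sum_{k=0}^d \binom{d}{k}\zeta^{d-k}\epsilon^k$, so $f_d(z) = 1 - z^d$ is a polynomial in $\epsilon$ of degree $d$ (when $d\ge 1$; for $d = 0$ it is identically $0$). Its value at $\epsilon = 0$ is $1 - \zeta^d = 1 - e^{2d\pi i/m}$, which vanishes precisely when $\zeta^d = 1$, i.e.\ when $m \mid d$. If $m \mid d$ and $d \neq 0$, then using $\zeta^d = 1$ the coefficient of $\epsilon^1$ equals $-d\,\zeta^{d-1} = -d\,\zeta^{-1} = -d\,e^{-2\pi i/m}$, which is nonzero. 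This settles all four assertions in this case.

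\textbf{Case $d < 0$.} Put $e = -d > 0$. Since $|\zeta + \epsilon| \geq 1 - |\epsilon| > 0$ on $|\epsilon| < 1$, the function $z = \zeta + \epsilon$ is nonvanishing there, hence $z^d = z^{-e}$ is holomorphic in $\epsilon$ on that disc, which is the analyticity claim. To read off the low-order terms I would factor out $\zeta$ and use the generalized binomial series
\[ z^{-e} = \zeta^{-e}\Bigl(1 + \tfrac{\epsilon}{\zeta}\Bigr)^{-e} = \zeta^{-e}\sum_{k\geq 0}\binom{-e}{k}\zeta^{-k}\epsilon^k, \]
convergent for $|\epsilon/\zeta| = |\epsilon| < 1$. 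Thus $f_d(z) = 1 - \zeta^{-e} - \zeta^{-e}\sum_{k\geq 1}\binom{-e}{k}\zeta^{-k}\epsilon^k$, whose constant term is $1 - \zeta^{-e} = 1 - e^{2d\pi i/m}$, nonzero iff $m \nmid e$ iff $m \nmid d$. When $m \mid d$ we have $\zeta^{-e} = 1$, so the coefficient of $\epsilon$ is $-\binom{-e}{1}\zeta^{-1} = e\,e^{-2\pi i/m} = -d\,e^{-2\pi i/m}$, nonzero. This matches the stated formula and finishes the proof.

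\textbf{Where the work is.} There is no real obstacle here; the only points requiring a little attention are (i) noting that $z$ is nonvanishing on $|\epsilon|<1$ so that negative powers are defined and analytic, and (ii) bookkeeping the signs so that the linear coefficient comes out as $-d\,e^{-2\pi i/m}$ uniformly in the sign of $d$, which works because $-\binom{-e}{1} = e = -d$. One can in fact unify the two cases by writing, for every $d \in \Z$, $z^d = \zeta^d(1+\epsilon/\zeta)^d$ and expanding $(1+w)^d = \sum_{k\ge 0}\binom{d}{k}w^k$ — a finite sum when $d \ge 0$, a convergent series for $|w| < 1$ when $d < 0$ — which simultaneously yields the constant term $1 - \zeta^d$ and the linear term $-d\,\zeta^{d-1}\epsilon$, and specializing $\zeta^d = 1$ in the divisible case gives the asserted value $-d\,e^{-2\pi i/m}$.
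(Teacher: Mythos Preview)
Your proof is correct and complete. The paper itself states this lemma without proof, treating it as an elementary observation to be used in evaluating the product formula; your binomial-expansion argument (unified via $z^d = \zeta^d(1+\epsilon/\zeta)^d$) is exactly the natural verification one would supply, and your care in checking that $z$ is nonvanishing on $|\epsilon|<1$ so that negative powers make sense is the only point requiring any comment.
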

\vspace{5mm}

Since $\Theta_\lambda(z)$ for  $z = e^{2\pi i/m} + \epsilon$
is a polynomial in $\epsilon$,  comparing the order of zero in $\epsilon$ in  the numerator and the denominator 
of the polynomial \eqref{productformula},
we get the
following result. 
\vspace{5mm}

\begin{corollary} \label{inequality}
    Let $m$ be a positive integer and let 
    $\lambda$ be a dominant weight in $X^\ast(\T)$.
    Then, we have 
    \[\tag{2}   \# \{\alpha\in \Phi^+:
    m| \langle\rho, \alpha^\vee\rangle
   \} \leq 
     \# \{\alpha\in \Phi^+:
    m|\langle\lambda+\rho, \alpha^\vee\rangle
    \} .\]
    Further, $\Theta_\lambda(\rho^\vee(e^{2\pi i /m})) = \Theta_\lambda(e^{\pi i /m})\neq  0$ if and only if the above inequality is an equality.
\end{corollary}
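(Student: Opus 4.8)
The plan is to read off the inequality and the equality condition directly from the product formula \eqref{productformula}, by analyzing the order of vanishing in $\epsilon$ of the numerator and denominator when $z = e^{2\pi i/m} + \epsilon$. First I would substitute $z = e^{2\pi i/m}+\epsilon$ into \eqref{productformula} and apply Lemma \ref{c-term} to each factor. In the numerator $\prod_{\alpha\in\Phi^+}(1 - z^{2\langle\lambda+\rho,\alpha^\vee\rangle})$, a factor indexed by $\alpha$ contributes a zero at $\epsilon = 0$ precisely when $m \mid 2\langle\lambda+\rho,\alpha^\vee\rangle$, and when it does it vanishes to order exactly $1$ (its leading term in $\epsilon$ is nonzero by Lemma \ref{c-term}); otherwise it contributes a nonzero constant. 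Hence the order of vanishing of the numerator at $\epsilon=0$ equals $N_\lambda := \#\{\alpha\in\Phi^+ : m \mid 2\langle\lambda+\rho,\alpha^\vee\rangle\}$, and likewise the denominator vanishes to order exactly $N_0 := \#\{\alpha\in\Phi^+ : m \mid 2\langle\rho,\alpha^\vee\rangle\}$. Since $\Theta_\lambda(z)$ is (a Laurent monomial times) an actual polynomial identity — indeed $\Theta_\lambda(e^{\pi i/m}+\cdots)$ is a polynomial in $\epsilon$ as noted just before the corollary — the quotient must be finite at $\epsilon = 0$, forcing $N_0 \le N_\lambda$, and the value $\Theta_\lambda(\rho^\vee(e^{2\pi i/m})) = \Theta_\lambda(e^{\pi i/m})$ is nonzero if and only if $N_0 = N_\lambda$ (equal orders of vanishing, so the limit of the quotient is a nonzero ratio of leading coefficients).

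The one subtlety I would need to address is that the sets in the corollary statement are phrased with the divisibility condition $m \mid \langle\rho,\alpha^\vee\rangle$ rather than $m \mid 2\langle\rho,\alpha^\vee\rangle$. So the real content beyond the order-of-vanishing bookkeeping is the claim that for each fixed $\lambda$ (including $\lambda = 0$),
\[
\#\{\alpha\in\Phi^+ : m \mid 2\langle\lambda+\rho,\alpha^\vee\rangle\} = \#\{\alpha\in\Phi^+ : m \mid \langle\lambda+\rho,\alpha^\vee\rangle\}.
\]
This is not automatic factor-by-factor: it can happen that $m \mid 2\langle\lambda+\rho,\alpha^\vee\rangle$ but $m \nmid \langle\lambda+\rho,\alpha^\vee\rangle$, which requires $m$ even and $\langle\lambda+\rho,\alpha^\vee\rangle$ an odd multiple of $m/2$. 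I expect this to be the main obstacle, and I would handle it as follows. Write $m = 2m'$ if $m$ is even (if $m$ is odd the two conditions are literally equivalent and there is nothing to prove, since $\gcd(2,m)=1$). I would argue that the ``extra'' factors — those $\alpha$ with $\langle\lambda+\rho,\alpha^\vee\rangle \equiv m' \pmod m$ — contribute in the same number to the numerator and denominator counts, so that the discrepancy between the count with modulus $m$ on $2\langle\cdot,\cdot\rangle$ and the count with modulus $m$ on $\langle\cdot,\cdot\rangle$ is the same for $\lambda$ as for $0$, and cancels in the inequality. Alternatively, and more cleanly, I would observe that what truly controls the vanishing of $\Theta_\lambda$ is the comparison of the two orders-of-vanishing $N_\lambda$ and $N_0$ computed with the genuine exponents $2\langle\lambda+\rho,\alpha^\vee\rangle$ appearing in \eqref{productformula}; then I would invoke Lemma \ref{integral}, which gives $\langle\rho,\alpha^\vee\rangle\in\Z$ and more generally $\langle\lambda+\rho,\alpha^\vee\rangle\in\Z$, together with the observation that $\rho^\vee(e^{2\pi i/m})$ is by definition $(2\rho^\vee)(e^{2\pi i/2m})$, i.e. the ``$z$'' being evaluated is $e^{\pi i/m}$ so that the exponents that matter are $2\langle\lambda+\rho,\alpha^\vee\rangle$ against the base $e^{\pi i/m}$, and $m \mid \langle\lambda+\rho,\alpha^\vee\rangle \iff 2m \mid 2\langle\lambda+\rho,\alpha^\vee\rangle \iff z^{2\langle\lambda+\rho,\alpha^\vee\rangle} = 1$. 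This last chain of equivalences is the clean resolution: the set appearing in the corollary is exactly the set of $\alpha$ for which the corresponding factor of the numerator of \eqref{productformula} vanishes at $z = e^{\pi i/m}$, and similarly for the denominator.

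With that identification in hand the proof assembles quickly: by Lemma \ref{c-term} each vanishing factor vanishes to order exactly one in $\epsilon$ and each non-vanishing factor is a nonzero constant plus higher order, so $\operatorname{ord}_{\epsilon=0}(\text{numerator}) = \#\{\alpha : m\mid\langle\lambda+\rho,\alpha^\vee\rangle\}$ and $\operatorname{ord}_{\epsilon=0}(\text{denominator}) = \#\{\alpha : m\mid\langle\rho,\alpha^\vee\rangle\}$; the monomial prefactor $z^{-2\langle\lambda,\rho^\vee\rangle}$ is a unit near $z = e^{\pi i/m}$ and contributes no zero or pole. Since $\Theta_\lambda$ is regular (a Laurent polynomial, hence finite at every nonzero point), we must have $\operatorname{ord}_{\epsilon=0}(\text{denominator}) \le \operatorname{ord}_{\epsilon=0}(\text{numerator})$, which is inequality (2); and $\Theta_\lambda(e^{\pi i/m})$ is the limit as $\epsilon\to 0$ of $z^{-2\langle\lambda,\rho^\vee\rangle}$ times the ratio of the two products, which is a finite nonzero number precisely when the two orders of vanishing coincide — giving the ``if and only if'' in the equality case. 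I would close by remarking that the identity $\Theta_\lambda(\rho^\vee(e^{2\pi i/m})) = \Theta_\lambda(e^{\pi i/m})$ is just the definition $\rho^\vee(e^{2\pi i/m}) = (2\rho^\vee)(e^{2\pi i/2m}) = \psi(e^{\pi i/m})$ recorded in Section \ref{three}, so that the element at which we are evaluating the character is precisely $\psi$ applied to $z = e^{\pi i/m}$.
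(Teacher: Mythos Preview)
Your proposal is correct and follows essentially the same approach as the paper, which simply says: since $\Theta_\lambda(z)$ is a polynomial in $\epsilon$ near the evaluation point, compare the orders of vanishing in $\epsilon$ of the numerator and denominator of \eqref{productformula}. Your detour worrying about $m\mid 2\langle\lambda+\rho,\alpha^\vee\rangle$ versus $m\mid\langle\lambda+\rho,\alpha^\vee\rangle$ stems from initially substituting $z=e^{2\pi i/m}+\epsilon$ rather than the correct $z=e^{\pi i/m}+\epsilon$; once you fix this (as you do in your second paragraph, applying Lemma~\ref{c-term} with $2m$ in place of $m$), the divisibility condition becomes exactly $m\mid\langle\lambda+\rho,\alpha^\vee\rangle$ and the argument goes through cleanly without any need to match ``extra'' factors.
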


\vspace{5mm}

Corollary \ref{inequality} will be interpreted as the dimension of
the centralizer of certain elements in the dual group as in the following proposition.
This result is due to J-P. Serre, see Theorem 1 of \cite{EKV}. His proof is similar but not exactly the same, as the proof given below, both relying on the restriction
of a representation of $\G$ to the principal $\SL_2(\C)$.

\vspace{2mm}

\begin{prop} \label{minimal}
  Let $m$ be a positive integer, and $\G$ a semisimple simply connected group.
  Then for the homomorphism $\hat{\rho}: \C^\times \rightarrow \wT \subset \wG$,
  the centralizer of  $\hat{\rho}(e^{2\pi i /m})$ in $\wG$ has minimal possible dimension among the
  centralizer of elements in $\wG$ of order dividing $m$.
\end{prop}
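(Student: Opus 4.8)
The plan is to deduce Proposition~\ref{minimal} from Corollary~\ref{inequality} by interpreting both sides of inequality~(2) as dimensions of centralizers in the dual group. First I would recall the standard description of the centralizer of a semisimple element: if $s = \hat\rho(e^{2\pi i/m}) \in \wT \subset \wG$, then $\wG^s$ is a connected reductive group (connectedness holds because $\wG$ is of adjoint type, being the dual of the simply connected $\G$, or one can pass to the appropriate isogeny which does not change dimensions) whose root system consists precisely of those roots $\beta$ of $\wT$ in $\wG$ with $\beta(s) = 1$. Under the identification $X^*(\wT) = X_*(\T)$, the roots of $\wG$ are the coroots $\alpha^\vee$ of $\G$, and $\alpha^\vee(s) = e^{2\pi i \langle \rho, \alpha^\vee\rangle/m}$, so $\alpha^\vee(s)=1$ exactly when $m \mid \langle \rho, \alpha^\vee\rangle$. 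Hence
\[
  \dim \wG^s = \rk \wG + 2\cdot\#\{\alpha \in \Phi^+ : m \mid \langle \rho, \alpha^\vee\rangle\}.
\]

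Next I would do the same bookkeeping for an arbitrary element $t \in \wG$ of order dividing $m$. Up to conjugacy $t$ lies in $\wT$, and since $\wG$ is adjoint, $t = \hat\mu(e^{2\pi i/m})$ for some cocharacter $\hat\mu \in X_*(\wT) = X^*(\T)$; moreover, conjugating by the Weyl group we may take the corresponding weight $\mu$ of $\G$ to be dominant. As above, $\dim \wG^t = \rk \wG + 2\cdot\#\{\alpha \in \Phi^+ : m \mid \langle \mu, \alpha^\vee\rangle\}$. Now I want to compare this with the right-hand side of~(2), which involves $\langle \lambda + \rho, \alpha^\vee\rangle$ for a dominant weight $\lambda$. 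The point is to choose $\lambda$ so that $\lambda + \rho$ plays the role of $\mu$: given the dominant $\mu$, one is tempted to set $\lambda = \mu - \rho$, but this need not be dominant (or integral in the right lattice). The fix is to replace $\mu$ by $\mu + m\nu$ for a suitable dominant weight $\nu$ — this changes $t$ by an element of order dividing... actually it does not change $t$ at all since $\hat\mu$ and $\hat\mu + m\hat\nu$ give the same value at $e^{2\pi i/m}$, and it does not change the divisibility condition $m \mid \langle \mu, \alpha^\vee\rangle$ since $m \mid \langle m\nu,\alpha^\vee\rangle$ automatically. So by adding a large multiple of $\rho$ (say) to $\mu$ I can arrange that $\lambda := \mu - \rho$ is dominant and lies in $X^*(\T)$, while keeping $\dim\wG^t$ and the set $\{\alpha : m\mid\langle\mu,\alpha^\vee\rangle\}$ unchanged.

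With that choice, Corollary~\ref{inequality} gives
\[
  \#\{\alpha \in \Phi^+ : m \mid \langle\rho,\alpha^\vee\rangle\}
  \;\leq\;
  \#\{\alpha \in \Phi^+ : m \mid \langle\lambda+\rho,\alpha^\vee\rangle\}
  \;=\;
  \#\{\alpha \in \Phi^+ : m \mid \langle\mu,\alpha^\vee\rangle\},
\]
and multiplying by $2$ and adding $\rk\wG$ yields $\dim\wG^{s} \leq \dim\wG^{t}$, which is exactly the claim. I expect the main obstacle to be the third step — making the reduction from an arbitrary torsion element $t$ to one of the form $(\lambda+\rho)(e^{2\pi i/m})$ with $\lambda$ dominant integral — being careful about the lattices ($X^*(\T)$ versus $X^*(\T)\otimes\Q$), about the center and the isogeny type so that "order dividing $m$" and "dimension of centralizer" behave as expected, and about the fact that one only needs the $W$-orbit representative, so replacing $t$ by a conjugate and then by $t$ times nothing (only adjusting the lattice representative $\hat\mu$ by $m X_*(\wT)$) is legitimate. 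Everything else is the routine translation between roots of $\wG$ and coroots of $\G$ via the duality set up in Section~\ref{two}.
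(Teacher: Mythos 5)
Your proposal is correct and follows essentially the same route as the paper: identify $\dim \wG^t$ for $t\in\wT$ via the roots vanishing on $t$ (Steinberg), translate the two sides of Corollary~\ref{inequality} into these dimensions, and reduce an arbitrary torsion element to the form $(\hat{\lambda}+\hat{\rho})(e^{2\pi i/m})$ with $\lambda$ dominant by adding $m\mu'$ for $\mu'$ sufficiently dominant. The only cosmetic difference is that you track the factor of $2$ from negative roots explicitly, which does not affect the comparison.
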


\begin{proof} As $\G$ is a semisimple simply connected  group, $\rho \in X^*(\T)$,
  and hence  $\hat{\rho}: \C^\times \rightarrow \wT \subset \wG$ is well-defined.
  
  According to a well-known theorem of Steinberg,
  the connected component of identity of the centralizer of an element $t$ in a maximal torus  $\wT \subset \wG$
    is generated by $\wT$ together with those root spaces of $\wT$ in $\wG$ for which the root takes
    the value 1 on $t$.
    Therefore, the centralizer of  $\hat{\rho}(e^{2\pi i /m})$ in $\wG$ is the dimension of $\wT$ plus the
    cardinality of the set of the left hand side of the inequality in
    equation (2) above. Similarly, the centralizer of
    $(\hat{\lambda}+ \hat{\rho})(e^{2\pi i /m})$ in $\wG$ is the dimension of $\wT$ plus the
    cardinality of the set of the right hand side of equation (2) above.

    Now, for any character $\lambda \in X^*(\T)$ treated as a cocharacter
    $\hat{\lambda} \in X_*(\wT)$, the adjoint action of $\hat{\lambda}(z) \in \wT$ on the $\alpha^\vee$ root space of $\wG$ is via $z \to
    z^{\langle \hat{\lambda}, \alpha^\vee \rangle} = z^{\langle \lambda, \alpha^\vee \rangle}$.
      Therefore,
  $\hat{\rho}(e^{2\pi i /m}) \in \wT$ acts trivially on the   $\alpha^\vee$ root space
of $\wT$ in $\wG$ 
    if and only if
    \[ m | \la \rho, \alpha^\vee\ra .\]

Similarly, 
$(\hat{\lambda} + \hat{\rho})  (e^{2\pi i /m}) \in \wT$
acts trivially on the   $\alpha^\vee$ root space
of $\wT$ in $\wG$ 
    if and only if
    \[ m | \la \lambda+ \rho, \alpha^\vee\ra .\]
    
    Thus Corollary \ref{inequality} proves that for any dominant integral
    weight $\lambda$, the dimension of the centralizer of
    $(\hat{\lambda} + \hat{\rho})  (e^{2\pi i /m})$  in $\wG$ is larger
    than or equal to the dimension of the centralizer of
    $ \hat{\rho}  (e^{2\pi i /m})$  in $\wG$.

Finally, we note that any element in $\wT$ of order $m$
is of the form  $\mu(e^{2\pi i /m})$, for some $\mu: \C^\times \rightarrow \wT $,
which can be assumed to be  of the form
$\mu = (\hat{\lambda}+\hat{\rho})$ for some $\lambda$ dominant integral.
To prove this, observe that
  $\mu(e^{2\pi i /m}) = (\mu +m \mu')
(e^{2\pi i /m})$ for any   $\mu, \mu': \C^\times \rightarrow \wT$,  and therefore assuming $\mu'$ is sufficiently dominant,  $\lambda= \mu +m \mu' -\hat{\rho}$ will be dominant, with  $\mu(e^{2\pi i /m}) = (\lambda+\hat{\rho})(e^{2\pi i /m})$,
completing the proof of the Proposition.
\end{proof}

\vspace{2mm}

{\bf Definition :} For a reductive group $\G$, and an integer $m\geq 1$, let $\G(m)$ be the reductive group
whose dual group is the connected component of identity of the
centralizer of  $\hat{\rho}(e^{2\pi i /m})= (2\hat{\rho})(e^{\pi i /m})
$ in $\wG$.  

\vspace{2mm}

The group $\G(m)$ is not a subgroup of $\G$, rather its dual group is contained in the dual group of $\G$.

\begin{remark} \label{themin}
  In Part B of the paper, we will prove that for $m|h$, the Coxeter number,
  the centralizer of  $\hat{\rho}(e^{2\pi i /m})$ in $\wG$ has {\it not only} the minimal possible dimension among the
  centralizer of elements in $\wG$ of order dividing $m$, it is the unique conjugacy class in $\wG$
  containing an element of order $\leq m$ of the minimal possible dimension
  for all classical groups except $D_{n}, E_6,E_7$. 
  It is true for $m=2$ and $m=h$ in all cases, the latter being a well-known result
  of Kostant, whereas
  for $m=2$, we are talking about centralizers of involutions which are well documented in the literature on
  symmetric spaces, such as in the book of Helgason, cf. \cite{Hel78}, and which allows one to check by hand that the centralizer
  of  $\hat{\rho}({-1})$ in $\wG$ has dimension which is strictly smaller than the dimension of the
  centralizer of other involutions; we will give independent proof of this in Part B of this paper.
  \end{remark}

\section{Character values on  principal elements} \label{four}
Let $\G_\lambda(m)$ be the reductive group 
with the root system
  \[\Phi_{\lambda,m}=  \{\alpha\in \Phi: m|\langle\lambda+\rho, \alpha^\vee\rangle\},\]
 and with with coroots $\alpha^\vee$ when
  $\alpha \in \Phi_{\lambda,m}$. By Lemma \ref{integral}, $ \langle\lambda+\rho, \alpha^\vee\rangle$ are integral.
  (To define the root datum of a group such as
   $\G_\lambda(m)$, we need to define
  $(X,R,X^\vee,R^\vee)$ which uses the same $X,X^\vee$ as for $\G$, and a certain subset of
  the set of roots and coroots of $\G$. Note that the set of roots
  $\{\alpha\in \Phi: m|\langle\lambda+\rho, \alpha^\vee\rangle \}$ is not a
  closed set of roots,
  because of which  $\G_\lambda(m)$ is not a subgroup of $\G$ (but is the dual of a subgroup of the dual group)). Further, $\Phi_m = \Phi_{\lambda,m}$
  for $\lambda = 0$ defines the group $\G(m)$ already introduced;  $\Phi^+_{\lambda,m}$, and  $\Phi^+_m$ are
  the corresponding positive roots.

  We begin with the following lemma that we will need.

  \begin{lemma}\label{sign}
    \[{\prod_{\alpha\in w \Phi^+}(1-z^{2\langle \rho, \alpha^\vee\rangle})} = (-1)^w
    z^{2\langle w^{-1}\rho - \rho, \rho^\vee \rangle }
      {\prod_{\alpha\in  \Phi^+}(1-z^{2\langle \rho, \alpha^\vee\rangle})}.\]
    \end{lemma}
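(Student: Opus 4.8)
The plan is a direct computation on the product, grouping the factors indexed by $w\Phi^+$ according to whether the indexing root is positive or negative and keeping track of the sign and power of $z$ produced when a factor indexed by a negative root is rewritten in terms of the opposite positive root.

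First I would set up the combinatorics of $w\Phi^+$. Let $N(w)=\{\alpha\in\Phi^+:\ w^{-1}\alpha\in\Phi^-\}$ be the inversion set of $w$, so $\#N(w)=\ell(w)$. A short check (a root $\mu\in w\Phi^+\cap\Phi^-$ satisfies $\mu\in\Phi^-$ and $w^{-1}\mu\in\Phi^+$, so $-\mu\in N(w)$, and conversely) shows that $w\Phi^+\cap\Phi^-=-N(w)$, whence
\[ w\Phi^+=\bigl(\Phi^+\setminus N(w)\bigr)\ \sqcup\ \bigl(-N(w)\bigr),\qquad \Phi^+=\bigl(\Phi^+\setminus N(w)\bigr)\ \sqcup\ N(w). \]
Now rewrite the left-hand side of the lemma with this decomposition: for $\beta\in N(w)$ the factor attached to $-\beta\in w\Phi^+$ is $1-z^{2\langle\rho,(-\beta)^\vee\rangle}=1-z^{-2\langle\rho,\beta^\vee\rangle}$, and the elementary identity $1-z^{-n}=-z^{-n}(1-z^{n})$ converts it to $-z^{-2\langle\rho,\beta^\vee\rangle}\,\bigl(1-z^{2\langle\rho,\beta^\vee\rangle}\bigr)$. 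Multiplying over $\beta\in N(w)$ and restoring the untouched factors indexed by $\Phi^+\setminus N(w)$ gives
\[ \prod_{\alpha\in w\Phi^+}\bigl(1-z^{2\langle\rho,\alpha^\vee\rangle}\bigr)=(-1)^{w}\,z^{-2\langle\rho,\,\sum_{\beta\in N(w)}\beta^\vee\rangle}\ \prod_{\alpha\in\Phi^+}\bigl(1-z^{2\langle\rho,\alpha^\vee\rangle}\bigr), \]
where $(-1)^{\#N(w)}=(-1)^{\ell(w)}=(-1)^{w}$ collects the accumulated minus signs. All exponents here are integers by Lemma~\ref{integral}, so the powers of $z$ are unambiguous.

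It remains to identify the exponent $-2\langle\rho,\sum_{\beta\in N(w)}\beta^\vee\rangle$ with $2\langle w^{-1}\rho-\rho,\rho^\vee\rangle$. For this I would invoke the classical identity $\rho-w\rho=\sum_{\alpha\in N(w)}\alpha$, applied to the coroot system $\Phi^\vee$ rather than to $\Phi$: since $\alpha\mapsto\alpha^\vee$ is $W$-equivariant and maps $\Phi^+$ onto the positive coroots, the inversion set of $w$ for $\Phi^\vee$ is exactly $\{\alpha^\vee:\alpha\in N(w)\}$, so the identity reads $\rho^\vee-w\rho^\vee=\sum_{\alpha\in N(w)}\alpha^\vee$. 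Substituting this into the exponent and using the $W$-invariance of the pairing, $\langle\rho,w\rho^\vee\rangle=\langle w^{-1}\rho,\rho^\vee\rangle$, we obtain $-2\langle\rho,\rho^\vee-w\rho^\vee\rangle=-2\langle\rho,\rho^\vee\rangle+2\langle w^{-1}\rho,\rho^\vee\rangle=2\langle w^{-1}\rho-\rho,\rho^\vee\rangle$, which is the claim.

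The argument is routine; the one step that needs genuine care is the identity $\sum_{\beta\in N(w)}\beta^\vee=\rho^\vee-w\rho^\vee$, which does \emph{not} follow from $\rho-w\rho=\sum_{\alpha\in N(w)}\alpha$ by applying the (non-additive) coroot map, and must instead be read off from the coroot system directly. One should also watch the orientation conventions, in particular that it is $-N(w)$ and not $-N(w^{-1})$ that equals $w\Phi^+\cap\Phi^-$, since a slip there reverses the sign of the exponent. Conceptually, $\prod_{\alpha\in\Phi^+}(1-z^{2\langle\rho,\alpha^\vee\rangle})$ is, up to a monomial, the Weyl denominator of $\wG$ evaluated at $\hat\rho(z^2)$, and the lemma is nothing but its transformation law under $W$.
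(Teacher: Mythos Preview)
Your argument is correct and is essentially the same computation as the paper's, just unpacked. The paper compresses it to one line by passing to the symmetrized product $f_w(z)=\prod_{\alpha\in w\Phi^+}(z^{-\langle\rho,\alpha^\vee\rangle}-z^{\langle\rho,\alpha^\vee\rangle})$: since each factor is odd under $\alpha\mapsto-\alpha$, one gets $f_w=(-1)^w f_1$ immediately, and the power of $z$ then drops out from $\sum_{\alpha\in w\Phi^+}\alpha^\vee=2w\rho^\vee$ rather than from the inversion-set identity $\rho^\vee-w\rho^\vee=\sum_{\beta\in N(w)}\beta^\vee$ that you invoke.
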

  \begin{proof}
    The Lemma follows from the standard observation that
    \[ f_w(z)= {\prod_{\alpha\in w \Phi^+}(z^{-\langle \rho, \alpha^\vee\rangle}}-z^{\langle \rho, \alpha^\vee\rangle}),\]
      is skew-symmetric in $w$, i.e., $f_w(z) = (-1)^w f_1(z).$
    \end{proof}

  \begin{corollary} \label{add}
    Suppose $w(\lambda+\rho) = \rho +m \mu,$ then as a function of $\epsilon$,
    where $z = e^{\pi i/m} +\epsilon$, the leading term of 
${ z^{-2\langle\lambda, \rho^\vee\rangle}}
      /{\prod_{\alpha\in \Phi^+}(1-z^{2\langle \rho, \alpha^\vee\rangle})}$ equals the leading term of 
    $(-1)^w {\mu(c(\G))}/
      {\prod_{\alpha\in w \Phi^+}(1-z^{2\langle \rho, \alpha^\vee\rangle})}$, where
      $c(\G)$ is the central element $ 2\rho^\vee(-1)$ of $\G$ with $c(\G)^2=1$.   
    \end{corollary}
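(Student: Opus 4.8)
The plan is to combine Theorem \ref{prod}, Lemma \ref{sign}, and the elementary expansion in Lemma \ref{c-term}. First I would start from the product formula \eqref{productformula}, writing
\[
\Theta_\lambda(z) \;=\; \frac{z^{-2\langle\lambda,\rho^\vee\rangle}}{\prod_{\alpha\in\Phi^+}(1-z^{2\langle\rho,\alpha^\vee\rangle})}\cdot\prod_{\alpha\in\Phi^+}(1-z^{2\langle\lambda+\rho,\alpha^\vee\rangle}).
\]
The hypothesis $w(\lambda+\rho)=\rho+m\mu$ lets me re-index the numerator product: since $\alpha\mapsto w\alpha$ is a bijection of $\Phi$ (not of $\Phi^+$, but it permutes $\Phi$ and sends $\Phi^+$ to $w\Phi^+$), and since $\langle\lambda+\rho,(w^{-1}\beta)^\vee\rangle=\langle w(\lambda+\rho),\beta^\vee\rangle=\langle\rho+m\mu,\beta^\vee\rangle$, I can rewrite
\[
\prod_{\alpha\in\Phi^+}(1-z^{2\langle\lambda+\rho,\alpha^\vee\rangle}) \;=\; \prod_{\beta\in w\Phi^+}(1-z^{2\langle\rho+m\mu,\beta^\vee\rangle}).
\]
The point is that this last product is, up to a sign and a monomial, the \emph{same shape} as a Weyl-denominator-type product, but now evaluated with the shifted exponents $2\langle\rho,\beta^\vee\rangle+2m\langle\mu,\beta^\vee\rangle$.

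Next I would localize at $z=e^{\pi i/m}+\epsilon$. By Lemma \ref{c-term} applied with the modulus $2m$ (so $z^2=e^{2\pi i/(2m)}+\cdots$ has modulus $2m$ in that lemma, equivalently working directly with $z$ and modulus $2m$): a factor $1-z^{2\langle\rho+m\mu,\beta^\vee\rangle}=1-(z^2)^{\langle\rho,\beta^\vee\rangle+m\langle\mu,\beta^\vee\rangle}$ vanishes at $\epsilon=0$ iff $m\mid \langle\rho+m\mu,\beta^\vee\rangle$ iff $m\mid\langle\rho,\beta^\vee\rangle$, i.e. exactly when the corresponding factor $1-(z^2)^{\langle\rho,\beta^\vee\rangle}$ of $\prod_{\beta\in w\Phi^+}(1-z^{2\langle\rho,\beta^\vee\rangle})$ vanishes. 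So the two products $\prod_{\beta\in w\Phi^+}(1-z^{2\langle\rho+m\mu,\beta^\vee\rangle})$ and $\prod_{\beta\in w\Phi^+}(1-z^{2\langle\rho,\beta^\vee\rangle})$ have the same order of vanishing in $\epsilon$ at $\epsilon=0$; and on the nonvanishing factors, the leading (constant) terms $1-e^{2\pi i \langle\rho+m\mu,\beta^\vee\rangle/m}=1-e^{2\pi i\langle\rho,\beta^\vee\rangle/m}$ agree, while on each vanishing factor the leading ($\epsilon^1$) coefficient is $-d\,e^{-2\pi i/(2m)}$ with $d$ the respective exponent; the ratio of those two leading coefficients is $(\langle\rho,\beta^\vee\rangle+m\langle\mu,\beta^\vee\rangle)/\langle\rho,\beta^\vee\rangle$ and multiplying over all vanishing factors I pick up a nonzero constant — this needs a moment's care but is essentially bookkeeping. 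Actually, I expect the cleaner route is to observe directly that
\[
\frac{\prod_{\beta\in w\Phi^+}(1-z^{2\langle\lambda+\rho,\beta^\vee\rangle})}{\prod_{\beta\in w\Phi^+}(1-z^{2\langle\rho,\beta^\vee\rangle})}
\]
has leading term at $\epsilon=0$ equal to $\mu(c(\G))$ times the analogous ratio evaluated — i.e. that after cancelling, the surviving constant is precisely $z^{\text{(something)}}$ at $z=e^{\pi i/m}$, which is $\mu(2\rho^\vee(-1))$-ish; this is where I'd need to track the monomial prefactor $z^{2m\langle\mu,\rho^\vee\rangle}$ coming from $\sum_{\beta\in w\Phi^+}\langle\mu,\beta^\vee\rangle = \langle\mu, 2\rho^\vee\rangle$ (using that $\sum_{\beta\in\Phi^+}\beta^\vee=2\rho^\vee$ and $w$ permutes $\Phi$), evaluated at $z=e^{\pi i/m}$, giving $e^{2\pi i\langle\mu,\rho^\vee\rangle} = \mu(e^{2\pi i\rho^\vee}) = \mu(2\rho^\vee(e^{\pi i})) = \mu(c(\G))$.

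Finally I would assemble: starting from \eqref{productformula}, replace the numerator by $\prod_{\beta\in w\Phi^+}(1-z^{2\langle\rho+m\mu,\beta^\vee\rangle})$, extract the monomial factor $z^{2m\langle\mu,\rho^\vee\rangle}$ whose value at $z=e^{\pi i/m}$ is $\mu(c(\G))$, use Lemma \ref{sign} to convert $\prod_{\beta\in w\Phi^+}(1-z^{2\langle\rho,\beta^\vee\rangle})$ in the denominator into $(-1)^w z^{2\langle w^{-1}\rho-\rho,\rho^\vee\rangle}\prod_{\alpha\in\Phi^+}(1-z^{2\langle\rho,\alpha^\vee\rangle})$, and match the two sides of the claimed identity of leading terms; the spare power of $z$ from Lemma \ref{sign} and from the substitution $w(\lambda+\rho)=\rho+m\mu$ should combine with $z^{-2\langle\lambda,\rho^\vee\rangle}$ to give exactly what is needed. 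The main obstacle I anticipate is precisely this bookkeeping of the various monomial prefactors in $z$ — from $z^{-2\langle\lambda,\rho^\vee\rangle}$, from shifting $\rho\rightsquigarrow\rho+m\mu$ inside the product, and from the sign lemma — and checking that when evaluated at $z=e^{\pi i/m}$ they collapse to the single scalar $\mu(c(\G))$ with $c(\G)=2\rho^\vee(-1)$; everything else (equality of vanishing orders, agreement of leading coefficients factor-by-factor) is routine given Lemma \ref{c-term}.
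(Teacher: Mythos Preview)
Your proposal takes an unnecessary detour and contains a real gap. The statement of the Corollary involves only the prefactor $z^{-2\langle\lambda,\rho^\vee\rangle}\big/\prod_{\alpha\in\Phi^+}(1-z^{2\langle\rho,\alpha^\vee\rangle})$ and its $w$-twisted counterpart; the numerator product $\prod(1-z^{2\langle\lambda+\rho,\alpha^\vee\rangle})$ from Theorem~\ref{prod} does not appear at all, so routing the argument through the full character formula is beside the point. The paper's proof is a one-step application of Lemma~\ref{sign}: from $w(\lambda+\rho)=\rho+m\mu$ one reads off $w^{-1}\rho-\rho=\lambda-mw^{-1}\mu$, plugs this into the exponent in Lemma~\ref{sign}, and rearranges to obtain the \emph{exact} identity
\[
\frac{z^{-2\langle\lambda,\rho^\vee\rangle}}{\prod_{\alpha\in\Phi^+}(1-z^{2\langle\rho,\alpha^\vee\rangle})}
\;=\;
(-1)^w\,\frac{z^{-m\langle w^{-1}\mu,\,2\rho^\vee\rangle}}{\prod_{\alpha\in w\Phi^+}(1-z^{2\langle\rho,\alpha^\vee\rangle})}.
\]
At $z=e^{\pi i/m}$ the monomial is $(-1)^{\langle w^{-1}\mu,\,2\rho^\vee\rangle}=(w^{-1}\mu)(2\rho^\vee(-1))=\mu(c(\G))$, the last equality because $c(\G)$ is central. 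Lemma~\ref{c-term} is not needed here.

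Two concrete problems with your sketch. First, $\sum_{\beta\in w\Phi^+}\beta^\vee=w(2\rho^\vee)$, not $2\rho^\vee$; you only land on $\mu(c(\G))$ at the end because $c(\G)$ happens to be Weyl-invariant, not for the reason you give. Second, and more seriously, the step ``extract the monomial factor $z^{2m\langle\mu,\rho^\vee\rangle}$'' from $\prod_{\beta\in w\Phi^+}(1-z^{2\langle\rho+m\mu,\beta^\vee\rangle})$ is not valid even at the level of leading terms: on each vanishing factor (those $\beta$ with $m\mid\langle\rho,\beta^\vee\rangle$) the leading coefficients of $1-z^{2\langle\rho+m\mu,\beta^\vee\rangle}$ and $1-z^{2\langle\rho,\beta^\vee\rangle}$ differ by the ratio $\langle\rho+m\mu,\beta^\vee\rangle/\langle\rho,\beta^\vee\rangle$, which is not a power of $e^{\pi i/m}$. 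So the two products do \emph{not} agree up to that monomial, and your assembly step does not close.
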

  \begin{proof}
    Under the condition  $w(\lambda+\rho) = \rho +m \mu,$ we find that
    $ w^{-1}\rho - \rho = \lambda - mw^{-1}(\mu)$, and therefore the identity
    in Lemma \ref{sign} becomes
    \[{\prod_{\alpha\in w \Phi^+}(1-z^{2\langle \rho, \alpha^\vee\rangle})} = (-1)^w
    z^{2\langle \lambda - mw^{-1}(\mu) , \rho^\vee \rangle }
      {\prod_{\alpha\in  \Phi^+}(1-z^{2\langle \rho, \alpha^\vee\rangle})},\]
      which can be rewritten as:

\[\dfrac{ z^{-2\langle\lambda, \rho^\vee\rangle}}
      {\prod_{\alpha\in \Phi^+}(1-z^{2\langle \rho, \alpha^\vee\rangle})} =
    (-1)^w  \dfrac{ z^{\langle m w^{-1}(\mu), 2\rho^\vee\rangle}}
      {\prod_{\alpha\in w \Phi^+}(1-z^{2\langle \rho, \alpha^\vee\rangle})}. \]
      
      Now, evaluating the two sides of this equality at  $z = e^{\pi i/m} +\epsilon$, we find that the leading term of the LHS is the same as the leading term
      of  $1/{\prod_{\alpha\in w \Phi^+}(1-z^{2\langle \rho, \alpha^\vee\rangle})},$
      multiplied by the leading term of
      $ (-1)^w  z^{-\langle m w^{-1}(\mu), 2\rho^\vee\rangle}$
        which as $(e^{\pi i/m})^m=-1$, is $(-1)^w w^{-1}(\mu)(  2\rho^\vee(-1))$. Now,
        $ 2\rho^\vee(-1)$ is a central element $c(\G)$ of $\G$ of order 1 or 2
        (which determines whether a selfdual representation of $\G$ is orthogonal or symplectic) $w^{-1}(\mu)(  2\rho^\vee(-1)) =\mu( c(\G)),$ completing the proof of the corollary.  \end{proof}

  In the following theorem, we will be using
  dominant integral weight $\lambda$ of $\T$ such that 
  treating $\lambda+\rho$ and $\rho$ as co-characters of $\wT$,  the elements 
    $(\lambda+\rho)(e^{2\pi i /m})$ and $\rho(e^{2\pi i /m})$ of $\wG(\C)$ are 
  conjugate in $\widehat{\G}(\mathbb{C})$, thus we must have:
  \[ w(\lambda+\rho) = \rho +m \mu,\]
  for some $w$ in the Weyl group of $\G$, and some cocharacter
  $\mu \in  X_*(\wT)$. Here we have used an elementary observation that
  a cocharacter is of the form $m \mu$  for some cocharacter
  $\mu \in  X_*(\wT)$ if and only if it is trivial on $e^{2\pi i /m}$.
  It may be noted that the  relationship $w(\lambda+\rho) = \rho +m \mu$
  does not uniquely determine the element $w$ of the Weyl group, nor the  cocharacter
  $\mu \in  X_*(\wT)$. However, we can further demand that $w$,
  which under the assumption $w(\lambda+\rho) = \rho +m \mu$, 
  has the property that   \[ w(\Phi_{\lambda,m}) = \Phi_{m},
  \]
  in fact takes the
  positive roots of $\Phi_{\lambda,m}$ to the positive roots of $\Phi_{m}$.
  Since any two positive systems of roots are conjugate by an element in the Weyl group, we can,
  and we will, assume that $ w(\Phi^+_{\lambda,m}) = \Phi^+_{m}$.
  The $(w,\mu)$ appearing in the theorem below
  are as here.

  \begin{theorem}\label{conj}
  Let $\G$ be a connected reductive group,
    and let $C_m$
    be the element $\psi(e^{\pi i/m})=   \rho^\vee(e^{2\pi i /m}) \in \G$. 
      Let $\pi_\lambda$ be the irreducible finite dimensional representation of $\G(\C)$ with highest weight  $\lambda \in  X^*(\T)$
      and with character $\Theta_\lambda: \T\rightarrow \C$.
      Then the dimension of the centralizer of $(\hat{\lambda}+\hat{\rho})(e^{2\pi i /m})$ in $\wG(\C)$ is greater than or equal
      to the
      dimension of the centralizer of $\hat{\rho}(e^{2\pi i /m})$, with equality of dimensions
      if and only if $\Theta_\lambda(C_m)\neq 0$.

    Assume that
$(\hat{\lambda}+\hat{\rho})(e^{2\pi i /m})$ and $\hat{\rho}(e^{2\pi i /m})$ are
    conjugate in $\widehat{\G}(\mathbb{C})$. Fix an element $w$ in the Weyl group
    of $\G$, and a character $\mu$ of the maximal torus $\T$ of $\G$, as above,  such that
     $w(\lambda+\rho) = \rho +m \mu$. Then,
    there is a constant $d_m$ which is the dimension of an irreducible
    (fundamental and minuscule) representation of the Lie algebra of $\G(m)^{\der}$, the
    derived subgroup of $\G(m)$, equivalently, that of the simply connected  cover of
 $\G(m)^{\der}$,
    such that
    
  \[\Theta_\lambda(C_m) =  (-1)^{w} \mu(c(\G)) \cdot d_\lambda/d_m,\]
where $c(\G)= 2\rho^\vee(-1) \in \T$, $d_\lambda$ is  the dimension of the
irreducible representation of the simply connected cover of $\G_\lambda(m)^{\der}$ of
dominant integral weight $(\lambda+\rho)/m-\rho_m$,
and where $\rho_m$ is half the sum of its positive roots. Further, $d_m=1$
if $G$ is of type $A_\ell,B_\ell$, and for $C_\ell$ if  $m$ is odd. Its complete description
is  due to Patrick Polo in \cite{Polo}.

Assume now that $m$ divides the Coxeter number $h$,
and that when $\G$ is of 
type $D_{n+1}$, $m$ is even and $2n/m$ is odd; 
when $G$ is of type $E_6$ (resp. $E_7$) $m\neq 4$ (resp. $m\neq 9$),
then $\Theta_\lambda(C_m)\neq 0$ if and only if
$(\hat{\lambda}+\hat{\rho})(e^{2\pi i /m})$ and $\hat{\rho}(e^{2\pi i /m})$ are
    conjugate in $\widehat{\G}(\mathbb{C})$.
\end{theorem}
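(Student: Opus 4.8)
The statement has three ingredients: (i) the centralizer inequality together with its equality criterion; (ii) the closed formula $\Theta_\lambda(C_m)=(-1)^w\mu(c(\G))\,d_\lambda/d_m$ under the conjugacy hypothesis; (iii) the vanishing criterion when $m\mid h$. I would dispatch (i) and (iii) quickly from material already in place. For (i): by Steinberg's theorem the connected centralizer in $\wG$ of $(\hat\lambda+\hat\rho)(e^{2\pi i/m})$ is generated by $\wT$ and the root spaces on which that element acts trivially, namely those $\alpha^\vee$ with $m\mid\langle\lambda+\rho,\alpha^\vee\rangle$; since these come in pairs $\pm\alpha^\vee$, the centralizer has dimension $\dim\wT+2\,\#\Phi^+_{\lambda,m}$, and likewise the centralizer of $\hat\rho(e^{2\pi i/m})$ has dimension $\dim\wT+2\,\#\Phi^+_m$. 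Corollary \ref{inequality} gives $\#\Phi^+_m\le\#\Phi^+_{\lambda,m}$, with equality iff $\Theta_\lambda(C_m)\ne0$; that is (i). For (iii): combine (i) with the uniqueness result of Part B (Remark \ref{themin}) — under the hypotheses listed, with $m\mid h$, the class of $\hat\rho(e^{2\pi i/m})$ is the unique $\wG$-conjugacy class of elements of order dividing $m$ whose centralizer is of minimal dimension. Since $(\hat\lambda+\hat\rho)(e^{2\pi i/m})$ has order dividing $m$, part (i) then says $\Theta_\lambda(C_m)\ne0$ iff its centralizer is of minimal dimension, iff it is $\wG$-conjugate to $\hat\rho(e^{2\pi i/m})$.

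\textbf{The main computation, (ii).} I would write the product formula \eqref{productformula} as $\Theta_\lambda(z)=N(z)\cdot z^{-2\langle\lambda,\rho^\vee\rangle}/D(z)$, with $N(z)=\prod_{\alpha\in\Phi^+}(1-z^{2\langle\lambda+\rho,\alpha^\vee\rangle})$ and $D(z)=\prod_{\alpha\in\Phi^+}(1-z^{2\langle\rho,\alpha^\vee\rangle})$, and extract leading terms at $z=e^{\pi i/m}+\epsilon$ as $\epsilon\to0$ (recall $\Theta_\lambda(e^{\pi i/m})=\Theta_\lambda(C_m)$, as noted in Corollary \ref{inequality}). By Lemma \ref{c-term}, $N$ vanishes to order $\#\Phi^+_{\lambda,m}$ with leading coefficient $\prod_{\alpha\in\Phi^+_{\lambda,m}}\bigl(-2\langle\lambda+\rho,\alpha^\vee\rangle e^{-\pi i/m}\bigr)\cdot\prod_{\alpha\in\Phi^+\setminus\Phi^+_{\lambda,m}}\bigl(1-e^{2\pi i\langle\lambda+\rho,\alpha^\vee\rangle/m}\bigr)$, while by Corollary \ref{add} the leading term of $z^{-2\langle\lambda,\rho^\vee\rangle}/D(z)$ equals that of $(-1)^w\mu(c(\G))\big/\prod_{\alpha\in w\Phi^+}(1-z^{2\langle\rho,\alpha^\vee\rangle})$. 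The key bookkeeping step uses $\alpha=w\beta$, $(w\beta)^\vee=w\beta^\vee$ and $w^{-1}\rho=\lambda+\rho-mw^{-1}\mu$ to see that $m\mid\langle\rho,(w\beta)^\vee\rangle\iff\beta\in\Phi^+_{\lambda,m}$: so the divisible roots of $w\Phi^+$ are exactly $\Phi^+_m$ (by the chosen $w$), the two powers of $\epsilon$ agree because $\#\Phi^+_m=\#\Phi^+_{\lambda,m}$, and the non-divisible factors $1-e^{2\pi i\langle\rho,(w\beta)^\vee\rangle/m}=1-e^{2\pi i\langle\lambda+\rho,\beta^\vee\rangle/m}$ cancel term by term against those in the leading coefficient of $N$. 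What survives is
\[ \Theta_\lambda(C_m)=(-1)^w\mu(c(\G))\cdot\frac{\prod_{\alpha\in\Phi^+_{\lambda,m}}\langle\lambda+\rho,\alpha^\vee\rangle}{\prod_{\alpha\in\Phi^+_m}\langle\rho,\alpha^\vee\rangle}. \]

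\textbf{Identifying the ratio.} To interpret this as $d_\lambda/d_m$, apply the Weyl dimension formula: the numerator equals $m^{\#\Phi^+_{\lambda,m}}\,d_\lambda\prod_{\alpha\in\Phi^+_{\lambda,m}}\langle\rho_{\lambda,m},\alpha^\vee\rangle$ (Weyl dimension formula for the simply connected cover of $\G_\lambda(m)^{\der}$ at the highest weight $(\lambda+\rho)/m-\rho_{\lambda,m}$, which is dominant integral because $\langle\lambda+\rho,\alpha^\vee\rangle$ is a positive multiple of $m$ on each simple root of $\Phi_{\lambda,m}$, using Lemma \ref{integral}), and similarly, taking $\lambda=0$, the denominator equals $m^{\#\Phi^+_m}\,d_m\prod_{\alpha\in\Phi^+_m}\langle\rho_m,\alpha^\vee\rangle$. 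Since $w$ carries $\Phi^+_{\lambda,m}$ bijectively onto $\Phi^+_m$ and $\rho_{\lambda,m}$ to $\rho_m$, the two $\rho$-products and the two powers of $m$ cancel, leaving exactly $d_\lambda/d_m$, which gives the asserted formula. The further statements — that $\rho/m-\rho_m$ is a fundamental minuscule weight of $\G(m)^{\der}$, and that $d_m=1$ for types $A_\ell,B_\ell$ and for $C_\ell$ with $m$ odd — I would quote from \cite{Polo} and Section \ref{constant}. One should also remark that the right-hand side is independent of the choice of the pair $(w,\mu)$: two valid choices differ by an element of $W_{\Phi_m}$, which fixes $\hat\rho(e^{2\pi i/m})$.

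\textbf{Where the difficulty lies.} The delicate part is the bookkeeping in (ii): keeping precise track of which positive roots are divisible by $m$, of how $w$ permutes them, and — most importantly — of the fact that the spurious non-divisible factors appearing in $N$ and in the $w$-twisted denominator are \emph{literally equal} after the substitution $\alpha=w\beta$ (not merely equinumerous), so that they cancel and leave a clean ratio of Weyl dimensions. Beyond this the only genuinely external input is the structural fact that $\rho/m-\rho_m$ is minuscule, which is outsourced to \cite{Polo}; and assertion (iii) rests on the case-by-case analysis of Part B, which is also where the $D_{n+1}, E_6, E_7$ exceptions originate.
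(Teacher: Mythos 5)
Your proposal is correct and follows essentially the same route as the paper: part (i) via Steinberg's theorem and Corollary \ref{inequality}, part (ii) via Corollary \ref{add} followed by splitting the product over $\Phi^+$ into the $m$-divisible roots $\Phi^+_{\lambda,m}$ and their complement, cancelling the non-divisible factors literally term by term using $w(\lambda+\rho)=\rho+m\mu$ (the paper packages this as $B_\lambda(e^{\pi i/m})=1$), and identifying the surviving ratio of first-order coefficients as $d_\lambda/d_m$ by the Weyl dimension formula, and part (iii) by appeal to Part B. The only differences are organizational (you extract leading coefficients of numerator and denominator directly rather than factoring $\Theta'_\lambda=B_\lambda\cdot C_\lambda$), plus your added observation on independence of the choice of $(w,\mu)$, which the paper does not state explicitly.
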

\begin{proof}
  The first part of the Theorem about the necessary and sufficient condition
  for the non-vanishing of the character $\Theta_\lambda(C_m)$ is exactly the conclusion
  of Corollary \ref{inequality}.

  We therefore assume now that
  $w(\lambda+\rho) = \rho +m \mu$ with $w$ an element in the Weyl group of $\G$ with
$ w(\Phi^+_{\lambda,m}) = \Phi^+_{m}$.  

  By Corollary \ref{add},
  \begin{eqnarray*}
    \Theta_\lambda (z)
&  = &  z^{-2\langle\lambda, \rho^\vee\rangle}
\dfrac{\prod_{\alpha\in \Phi^+}(1-z^{2\langle \lambda+\rho, \alpha^\vee\rangle})}
      {\prod_{\alpha\in  \Phi^+}(1-z^{2\langle \rho, \alpha^\vee\rangle})}\\
       & = & (-1)^w \mu(c(G))\cdot
\dfrac{\prod_{\alpha\in \Phi^+}(1-z^{2\langle \lambda+\rho, \alpha^\vee\rangle})}
      {\prod_{\alpha\in w \Phi^+}(1-z^{2\langle \rho, \alpha^\vee\rangle})},\end{eqnarray*}
      where $z$ stands for the notation
  $\psi(z)$ introduced earlier which for the principal element $C_m$ corresponds
  to $z = e^{\pi i/m}$)

  It suffices therefore to calculate,
  \begin{eqnarray*} \Theta'_\lambda (z)
& = & 
\dfrac{\prod_{\alpha\in \Phi^+}(1-z^{2\langle \lambda+\rho, \alpha^\vee\rangle})}
      {\prod_{\alpha\in w \Phi^+}(1-z^{2\langle \rho, \alpha^\vee\rangle})},\\
&= &  
\dfrac{\prod_{\alpha\in \Phi^+_{\lambda,m}}(1-z^{2\langle \lambda+\rho, \alpha^\vee\rangle})}
      {\prod_{\alpha\in w \Phi^+_{\lambda,m}} (1-z^{2\langle \rho, \alpha^\vee\rangle})} \cdot
      \dfrac{\prod_{\alpha \in \Phi^+ \setminus \Phi^+_{\lambda,m}}(1-z^{2\langle \lambda+\rho, \alpha^\vee\rangle})}
            {\prod_{\alpha \in  w(\Phi^+ \setminus \Phi^+_{\lambda,m})} (1-z^{2\langle \rho, \alpha^\vee\rangle})}, 
    \end{eqnarray*}
  where,
  \[\Phi_{\lambda,m}=  \{\alpha\in \Phi: m|\langle\lambda+\rho, \alpha^\vee\rangle\},\]
  is the set of roots of the group $\G_\lambda(m)$. (We remind ourselves that
  by Lemma \ref{integral}, we have the integrality assertion $\langle\lambda+\rho, \alpha^\vee\rangle \in \Z$.)

  Let us write,
\begin{eqnarray*} \Theta'_\lambda (z)
            & = & B_\lambda(z) \cdot C_\lambda(z),      \end{eqnarray*}

where,

\begin{eqnarray*}
     B_\lambda(z) & = & 
      \dfrac{\prod_{\alpha  \in \Phi^+ \setminus \Phi^+_{\lambda,m}}(1-z^{2\langle \lambda+\rho, \alpha^\vee\rangle})}
            {\prod_{\alpha \in w(\Phi^+ \setminus
                \Phi^+_{\lambda, m})} (1-z^{2\langle \rho, \alpha^\vee\rangle})},\\
              C_\lambda(z) & = &  \dfrac{\prod_{\alpha\in \Phi^+_{\lambda,m}}(1-z^{2\langle \lambda+\rho, \alpha^\vee\rangle})}
      {\prod_{\alpha\in w \Phi^+_{\lambda, m}} (1-z^{2\langle \rho, \alpha^\vee\rangle})}.
\end{eqnarray*}

          By Lemma \ref{c-term},  each term in the
           product representing $B_\lambda$
($\alpha \in  \Phi^+ \setminus \Phi^+_{\lambda,m}$ in the numerator, and ${\alpha \in w( \Phi^+ \setminus \Phi^+_{m})}$ in the denominator)
          evaluated at $z = e^{\pi i/m} + \epsilon$
          is an analytic function
          in $\epsilon$ (in a neighborhood of zero) with an explicit nonzero constant term.
          (To apply Lemma \ref{c-term}, it is better to write $z = e^{\pi i/m} + \epsilon$ as
          $z = e^{2\pi i/(2m)} + \epsilon$, then the condition in Lemma \ref{c-term}
          is about $2m|2\langle \lambda+\rho, \alpha^\vee\rangle$,
          or $m|\langle \lambda+\rho, \alpha^\vee\rangle$, defining $\Phi_{\lambda,m}$.)

          Thus the  product for $B_\lambda$ 
evaluated at $z = e^{\pi i/m} + \epsilon$, gives rise to:

\begin{eqnarray*}
  B_{{\lambda}}(e^{\pi i/m} + \epsilon) & = &    \dfrac{\prod_{\alpha \in \Phi^+ \setminus
      \Phi^+_{\lambda,m}}(1-e^{2\pi i \langle \lambda+\rho, \alpha^\vee\rangle/m})}
  {\prod_{\alpha  \in w( \Phi^+ \setminus \Phi^+_{\lambda, m})} (1-e^{2\pi i \langle \rho, \alpha^\vee\rangle/m})}
  +\epsilon O(\epsilon),
  \\
   & =&      \dfrac{\prod_{\alpha \in \Phi^+ \setminus \Phi^+_{\lambda, m}}(1-e^{2\pi i \langle \lambda+\rho,  \alpha^\vee\rangle/m})}
   {\prod_{\alpha \in \Phi^+ \setminus \Phi^+_{\lambda, m}} (1-e^{2\pi i \langle \rho, w \alpha^\vee\rangle/m})} +\epsilon O(\epsilon), \\
   & =&      \dfrac{\prod_{\alpha \in \Phi^+ \setminus \Phi^+_{\lambda, m}}(1-e^{2\pi i \langle w(\lambda+\rho), w \alpha^\vee\rangle/m})}
   {\prod_{\alpha \in \Phi^+ \setminus \Phi^+_{\lambda, m}} (1-e^{2\pi i \langle \rho, w \alpha^\vee\rangle/m})} + \epsilon O(\epsilon),\\
&= & 1 + \epsilon O(\epsilon).
\end{eqnarray*}
where $\epsilon O(\epsilon)$  is an analytic function
in $\epsilon$ with zero constant term, and where
in the last equality, we have used
$ w(\lambda+\rho) = \rho + m \mu$.

Therefore,

\[ B_\lambda(e^{\pi i/m}) = 1.\]

Next we note that by Lemma \ref{c-term}, each term in the product expansion of $C_\lambda(z)$,
  evaluated at $z = e^{\pi i/m} + \epsilon$, both in the numerator and the denominator, are analytic functions
  in $\epsilon$ with zero constant term, and explicit nonzero first term, because of which the product expansion
  for $C_\lambda(z)$ 
 evaluated at $z = e^{\pi i/m} + \epsilon$, gives rise to:

    \[C_\lambda( e^{\pi i/m} + \epsilon)=
    \dfrac{\prod_{\alpha\in \Phi^+_{\lambda, m}}\langle\lambda+\rho, \alpha^\vee\rangle}
          {\prod_{\alpha\in w \Phi^+_{\lambda, m}}\langle\rho, \alpha^\vee\rangle} + \epsilon O'(\epsilon),\]
          where $\epsilon O'(\epsilon)$ is an analytic function
          in $\epsilon$ (in a neighborhood of zero) whose constant term is zero.

        Observe that:
            \[\prod_{\alpha\in \Phi^+_{\lambda, m}}\langle\lambda+\rho, \alpha^\vee\rangle = m^{|\Phi^+_{\lambda, m}|}\prod_{\alpha\in \Phi^+_{\lambda, m}}\langle \{(\lambda+\rho)/m-\rho^\lambda_m\} +\rho^\lambda_m, \alpha^\vee\rangle, \]
          where
          $\rho^\lambda_m $ is half the sum of positive roots in  $\Phi^+_{\lambda, m}$.

Therefore,

\begin{eqnarray*} C_\lambda(e^{\pi i/m}) & = &
  \dfrac {\prod_{\alpha\in \Phi^+_{\lambda, m}}
              \langle \{ (\lambda+\rho)/m-\rho^\lambda_m\} +\rho^\lambda_m, \alpha^\vee\rangle}{
            \prod_{\alpha\in \Phi^+_{\lambda, m}}\langle\rho^\lambda_m, \alpha^\vee\rangle} \cdot
          \dfrac{\prod_{\alpha\in \Phi^+_{\lambda, m}}\langle\rho^\lambda_m, \alpha^\vee\rangle} 
                {\prod_{\alpha\in w \Phi^+_{\lambda, m}}\langle\rho/m, \alpha^\vee\rangle}, \\
&=& \dfrac {\prod_{\alpha\in \Phi^+_{\lambda, m}}
              \langle \{ (\lambda+\rho)/m-\rho^\lambda_m\} +\rho^\lambda_m, \alpha^\vee\rangle}{
            \prod_{\alpha\in \Phi^+_{\lambda, m}}\langle\rho^\lambda_m, \alpha^\vee\rangle} \cdot
          \dfrac{\prod_{\alpha\in \Phi^+_{\lambda, m}}\langle\rho^\lambda_m, \alpha^\vee\rangle} 
                {\prod_{\alpha\in \Phi^+_{\lambda, m}}\langle\rho/m, w\alpha^\vee\rangle},
                \\ & = & d_\lambda / d_m,
          \end{eqnarray*}
where, using the fact that
$w(\Phi^+_{\lambda,m}) = \Phi^+_{m}$, we have $w\rho_m^\lambda= \rho_m$, and so:

\begin{eqnarray*}
  d_m & = &   \dfrac{\prod_{\alpha\in \Phi^+_{\lambda, m}}\langle\rho/m, w\alpha^\vee\rangle}{\prod_{\alpha\in \Phi^+_{\lambda, m}}\langle w \rho^\lambda_m,  w \alpha^\vee\rangle}  = \dfrac{\prod_{\alpha\in \Phi^+_{ m}}\langle \rho/m,
    \alpha^\vee\rangle}{\prod_{\alpha\in \Phi^+_{ m}}\langle\rho_m, \alpha^\vee\rangle}  =
  \dfrac{\prod_{\alpha\in \Phi^+_{ m}}\langle(\rho/m -\rho_m)+\rho_m,
    \alpha^\vee\rangle}{\prod_{\alpha\in \Phi^+_{ m}}\langle\rho_m, \alpha^\vee\rangle},  \\
  d_\lambda & =&  \dfrac {\prod_{\alpha\in \Phi^+_{\lambda, m}}
              \langle \{ (\lambda+\rho)/m-\rho^\lambda_m\} +\rho^\lambda_m, \alpha^\vee\rangle}{
    \prod_{\alpha\in \Phi^+_{\lambda, m}}\langle\rho^\lambda_m, \alpha^\vee\rangle}.
  \end{eqnarray*}

By the Weyl dimension formula, $d_m$ is the dimension of the highest weight irreducible
representation of the simply connected  cover of $\G(m)^{\der}$ of highest weight $(\rho/m -\rho_m)$ restricted to
the maximal torus of $\G(m)^{\der}$.
The calculation of the highest weight $(\rho/m -\rho_m)$
is partly executed in Section \ref{constant} for certain classical groups, and is completed
by Patrick Polo in \cite{Polo}.  In many cases, $(\rho/m -\rho_m)$ is
trivial on the derived subgroup $\G(m)^{\der}$
of $\G(m)$ hence $d_m=1$, and in all cases,
$(\rho/m -\rho_m)$ is a fundamental weight
of a minuscule representation of the simply connected cover of
$\G(m)^{\der}$, which is explicitly identified, and $d_m$ calculated in \cite{Polo}.

Finally,  $d_\lambda$ is also, by the Weyl dimension formula, the dimension of the 
highest weight module of the simply connected cover of $\G_\lambda(m)^{\der}$,
with highest weight,
the restriction of the weight
              \[\mu = (\lambda+\rho)/m-\rho^\lambda_m,\]
              to the maximal torus of $\G_\lambda(m)^{\der}$ which is the intersection
              of $\T$ with $\G_\lambda(m)^{\der}$; see Remark \ref{integrality} below.
              For this, we need to check that $\mu = (\lambda+\rho)/m-\rho^\lambda_m$
              is a dominant integral weight for
                          $\G_\lambda(m)^{\der}$, i.e.,

              \[ \la (\lambda+\rho)/m -\rho^\lambda_m, \alpha^\vee \ra ,\]
              are integral and $\geq 0$
              for all positive coroots $\alpha^\vee$ of $\G_\lambda(m)^{\der}$.

              By the definition
of $\G_\lambda(m)^{\der}$,  $\la \lambda+\rho, \alpha^\vee \ra  \geq m $
all positive coroots $\alpha^\vee$ of $\G_\lambda(m)^{\der}$. As $\rho^\lambda_m$ is  half
the sum of positive roots of $\Phi_{\lambda,m}$, $ \la \rho^\lambda_m, \alpha^\vee \ra  =1$ for all simple roots $\alpha$ of  $\Phi_{\lambda,m}$. Therefore,
\[ \la (\lambda+\rho)/m -\rho^\lambda_m, \alpha^\vee \ra  \geq 0 ,\]
for all simple roots of  $\Phi_{\lambda,m}$, and therefore for all
positive roots of $\Phi_{\lambda,m}$.

Further, by the definition of $\Phi_{\lambda,m}$,  $\la (\lambda+\rho), \alpha^\vee \ra  $
is a multiple of $m$ for all positive coroots $\alpha^\vee$ of $\G_\lambda(m)^{\der}$,
therefore
$(\lambda+\rho)/m$ is an integral weight in $\Phi_{\lambda,m}$, and hence
$(\lambda+\rho)/m -\rho_m^\lambda$ is also an integral weight in  $\Phi_{\lambda,m}$.

  Under  the restrictions imposed on the pair $(\G,m)$ in the last paragraph of the statement of the Theorem, it
  will be proved in Part B of this paper,  that if
    $(\lambda+\rho)(e^{2\pi i /m})$ and $\rho(e^{2\pi i /m})$ are
     not conjugate in $\widehat{\G}(\mathbb{C})$,
     then the dimension of the  centralizer
 of $(\lambda+\rho)(e^{2\pi i /m})$ in $\wG$
     is
  strictly larger than the dimension of the centralizer of $\rho (e^{2\pi i /m})$, and hence
  $\Theta_\lambda(C_m) =0$ by Corollary \ref{inequality}.
\end{proof}

\begin{remark} \label{integrality}
  In the above proof, we have used the following fact about reductive groups $\H$ with a maximal torus
  $\SS$, roots
  $\{\alpha\}$, and coroots $\{\alpha^\vee\}$. A dominant weight $\lambda \in X^*(\SS)\otimes \Q$ defines a
  representation of the simply connected cover of the derived group $\H^{\der}$ if and only if the following integrality condition is satisfied:
  \[ \langle \lambda, \alpha^\vee\rangle \in \Z ~~~~~~~~~~~~ \forall {~~\rm coroots~~} \alpha^\vee.\]
  \end{remark}

\begin{remark}
  The weight $\rho/m-\rho_m$, and more generally  $(\lambda+\rho)/m-\rho^\lambda_m$ are typically not
  integral weights for the group $\G(m)$ (resp., $\G_{\lambda}(m)$), and are integral only on the
  simply connected cover of the derived subgroup
  $\G(m)^{\der}$ (resp., $\G_{\lambda}(m)^{\der}$) by the previous remark, i.e.,
  these characters restricted to the maximal torus of  $\G(m)^{\der}$ (resp., $\G_{\lambda}(m)^{\der}$)
  are algebraic characters on the maximal tori in the simply connected cover of these derived groups.
  For instance if $m=h$, the Coxeter number of a simple group $\G$, then $\G(m)$
  is a torus, and $\rho_m$ is the trivial character, in which case  $\rho/m-\rho_m = \rho/m$. So, we will
  be asking if the character  $\rho/m$ of the maximal torus $\T=\G(m)$ makes sense as an
  algebraic character of $\T$, which is not true.
  \end{remark}

As a corollary to the above theorem, we note the following theorem of Kostant, \cite{Kos76},
given in this precise form in \cite{Pr2}.
\begin{corollary}
  Let $\G$ be a simple simply connected group over $\C$ with
  $h$ the Coxeter number, and $C_h$, 
    the Coxeter conjugacy class in $\G(\C)$.
        Let $\pi_\lambda$ be a finite dimensional representation of 
$\G(\C)$ with highest weight $\lambda: \T \rightarrow \C^\times$.
Then the character $\Theta_\lambda$ of $\pi_\lambda$
at $C_h$ is either $1,0,-1$, and it is nonzero if and only if 
$(\widehat{\lambda\cdot \rho}){}(e^{2\pi i /h})$ 
is conjugate to ${\wrho}(e^{2\pi i /h})$, 
both of them belonging to the Coxeter conjugacy class in $\wG$. 
If $\Theta_\lambda(C_h) \not = 0$, then the central character of $\pi_\lambda$ is trivial,
and hence $\pi_\lambda$ can be treated as a representation of an adjoint group,
with simply connected  dual group $\wG^{\rm sc}$. The conjugacy of
$(\widehat{\lambda\cdot \rho}){}(e^{2\pi i /h})$ and ${\wrho}(e^{2\pi i /h})$ in  $\wG^{\rm sc}$
is by a unique element of the Weyl group (because of the regularity of ${\wrho}(e^{2\pi i /h})$ in the simply connected group
$\wG^{\rm sc}$), call it $w_\lambda$. Then,
\[\Theta_\lambda(C_h) = (-1)^{\ell(w_\lambda)}.\]
\end{corollary}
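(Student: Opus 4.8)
The plan is to obtain the statement as the special case $m=h$ of Theorem~\ref{conj}, the point being that for this value of $m$ the auxiliary groups $\G(h)$ and $\G_\lambda(h)$ degenerate to tori, so that the constants $d_h$ and $d_\lambda$ appearing there are both $1$. First I would check that the restrictions imposed on the pair $(\G,m)$ in the last paragraph of Theorem~\ref{conj} are automatic when $m=h$: in type $D_{n+1}$ one has $h=2n$, which is even, and $2n/h=1$, which is odd; in type $E_6$ one has $h=12\neq 4$; in type $E_7$ one has $h=18\neq 9$. (Compare Remark~\ref{themin}, where the case $m=h$ is noted to be Kostant's theorem.) Thus Theorem~\ref{conj} already gives that $\Theta_\lambda(C_h)\neq 0$ if and only if $(\widehat{\lambda+\rho})(e^{2\pi i/h})$ is conjugate to $\widehat{\rho}(e^{2\pi i/h})$ in $\wG$.

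Next I would note that $\G(h)$ is a maximal torus, i.e.\ that $\Phi_h=\{\alpha\in\Phi:h\mid\langle\rho,\alpha^\vee\rangle\}$ is empty: for a positive root $\alpha$ the integer $\langle\rho,\alpha^\vee\rangle$ is the height of the positive coroot $\alpha^\vee$, which lies strictly between $0$ and $h$ because $\Phi^\vee$ has the same Weyl group, hence the same Coxeter number $h$, as $\Phi$; equivalently, this is Kostant's assertion that $\widehat{\rho}(e^{2\pi i/h})$ is regular semisimple, with centralizer a maximal torus (cf.\ Proposition~\ref{minimal} and Remark~\ref{themin}). Hence $\rho_h=0$, $\G(h)^{\der}$ is trivial, and $d_h=1$. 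Now assume $\Theta_\lambda(C_h)\neq 0$; then by the first assertion of Theorem~\ref{conj} (equivalently Corollary~\ref{inequality}) the centralizers of $(\widehat{\lambda+\rho})(e^{2\pi i/h})$ and of $\widehat{\rho}(e^{2\pi i/h})$ in $\wG$ have equal dimension, which forces $|\Phi^+_{\lambda,h}|=|\Phi^+_h|=0$; so $\G_\lambda(h)$ is a torus too, $\rho^\lambda_h=0$, and $d_\lambda=1$. Substituting $d_\lambda=d_h=1$ into the formula of Theorem~\ref{conj} yields $\Theta_\lambda(C_h)=(-1)^w\mu(c(\G))$, which is $\pm 1$ since $c(\G)^2=1$; together with the vanishing case this gives $\Theta_\lambda(C_h)\in\{1,0,-1\}$.

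To see that the central character of $\pi_\lambda$ is trivial when $\Theta_\lambda(C_h)\neq 0$, I would use the fact recalled in Section~\ref{two} that the elements of $\T$ acting on every simple root by $e^{2\pi i/h}$ constitute a single conjugacy class of $\G$. For $z$ in the center of $\G$ the element $zC_h$ still acts on each simple root by $e^{2\pi i/h}$, hence is conjugate to $C_h$; writing $\chi_\lambda$ for the central character of $\pi_\lambda$ we get $\chi_\lambda(z)\,\Theta_\lambda(C_h)=\Theta_\lambda(zC_h)=\Theta_\lambda(C_h)$, so $\chi_\lambda(z)=1$. Thus $\lambda$ lies in the root lattice of $\G$, and $\pi_\lambda$ descends to the adjoint group $\G^{\mathrm{ad}}$, whose dual is the simply connected group $\wG^{\mathrm{sc}}$. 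The final step is to re-run Theorem~\ref{conj} with $\G^{\mathrm{ad}}$ in place of $\G$: the element $c(\G^{\mathrm{ad}})=2\rho^\vee(-1)$ is trivial in $\G^{\mathrm{ad}}$, since $\alpha(2\rho^\vee(-1))=(-1)^{2h(\alpha)}=1$ for every root $\alpha$; the constants $d_h$ and $d_\lambda$ are unchanged, being isogeny invariants; and in the simply connected group $\wG^{\mathrm{sc}}$ the element $\widehat{\rho}(e^{2\pi i/h})$ is regular, because centralizers of semisimple elements in a simply connected group are connected, so that its centralizer is precisely $\wT$ and its $W$-stabilizer is trivial. Consequently the Weyl element $w$ with $w(\lambda+\rho)=\rho+h\mu$ is unique; this is the $w_\lambda$ of the statement, and the formula of Theorem~\ref{conj} collapses to $\Theta_\lambda(C_h)=(-1)^{\ell(w_\lambda)}$.

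The main obstacle is the last step, the passage to the adjoint group: one must verify that the non-vanishing criterion of Theorem~\ref{conj} applied to $\G^{\mathrm{ad}}$ is exactly conjugacy of $(\widehat{\lambda+\rho})(e^{2\pi i/h})$ and $\widehat{\rho}(e^{2\pi i/h})$ in $\wG^{\mathrm{sc}}$, and then keep track of the various weight and coweight lattices carefully enough to know that $\widehat{\rho}(e^{2\pi i/h})$ is a regular element there---this is the precise point at which Kostant's regularity theorem is used to make $w_\lambda$ well defined. The remaining ingredients---the reductions $d_h=d_\lambda=1$, the evaluation of the sign, and the vanishing of $c(\G^{\mathrm{ad}})$---are routine computations with root data.
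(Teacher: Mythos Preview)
Your proposal is correct and follows essentially the same route as the paper: specialize Theorem~\ref{conj} at $m=h$, use that $\G(h)$ is a torus so that $d_h=d_\lambda=1$, deduce triviality of the central character from $C_h\sim zC_h$, and then reapply Theorem~\ref{conj} to the adjoint group. You supply several details the paper leaves implicit---the verification that the exclusions in the last paragraph of Theorem~\ref{conj} do not bite at $m=h$, the explicit vanishing of $c(\G^{\mathrm{ad}})=2\rho^\vee(-1)$ in the adjoint group, and the regularity argument pinning down $w_\lambda$---but these are exactly the points the paper's terse ``Now, we use Theorem~\ref{conj} for the adjoint group'' is asking the reader to unpack.
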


\begin{proof}
  For the Coxeter number $h$, $\G(h)$ is a torus, and hence $\G(h)^{\der} = 1$
  and all its irreducible representations are
  one dimensional, therefore  $\Theta_\lambda(C_h) = 0,\pm 1$ is part of the conclusion of Theorem
  \ref{conj}, which further implies that
 $\Theta_\lambda(C_h)$ is 
  nonzero if and only if $(\widehat{\lambda\cdot \rho}){}(e^{2\pi i /h})$ 
  is conjugate to ${\wrho}(e^{2\pi i /h})$ in $\wG$. It is well-known and easy to prove that
  $C_h$ and $z\cdot C_h$, where $z$ belongs to the center of $\G$ are conjugate in $\G$. Therefore,
  if $\Theta_\lambda(C_h) \not = 0$, the central character of $\pi_\lambda$ must be trivial. Now, we use
  Theorem \ref{conj} for the adjoint group allowed by the previous remark.
  \end{proof}
\begin{example}
For $\G= \SL_2(\C)$, let $C_2 = \begin{pmatrix}
  i & 0 \\
   0 & -i  
\end{pmatrix}$ be the Coxeter conjugacy class in $\SL_2(\C)$.
Let $\pi_n = \Sym^n(\C^2)$ be the irreducible representation  
of $\SL_2(\C)$ of highest weight $n$ and dimension $(n+1)$. It is easy to see that
the character $\Theta_n(C_2)$ of $\pi_n$ at $C_2$ is nonzero if and only if $n$ is even,
in which case,
\[ \Theta_n(C_2) = (-1)^{n/2}.\]
In this case, 
$\lambda = n$, $\rho = 1$, therefore for
$w(\lambda+\rho) = \rho + 2 \mu$ to hold, we can have $w=1$, and $\mu=n/2$, or
$w=-1$, and $\mu = -(n+2)/2$. In this case $c(\SL_2(\C))=-1$. Therefore, if
$w=1$, and $\mu=n/2$, or $w=-1$, and $\mu = -(n+2)/2$, in either case,
Theorem \ref{conj}, gives the value $\Theta_n(C_2) = (-1)^{n/2}$ as expected.
\end{example}

\section{The constant $d_m$ for the group $\G_\lambda(m)$} \label{constant}
The aim of this section is to calculate the constant
\[
  d_m  =    \dfrac{\prod_{\alpha\in \Phi^+_{0, m}}\langle\rho/m, \alpha^\vee\rangle}{\prod_{\alpha\in \Phi^+_{0, m}}\langle\rho_m, \alpha^\vee\rangle}  =
\dfrac{\prod_{\alpha\in \Phi^+_{0, m}}\langle(\rho/m -\rho_m)+\rho_m, \alpha^\vee\rangle}{\prod_{\alpha\in \Phi^+_{0, m}}\langle\rho_m, \alpha^\vee\rangle},  \]
which appears in Theorem \ref{conj}, and is the dimension of an irreducible representation of the simply connected 
cover of $\G(m)^{\der}$ of highest weight $\rho/m -\rho_m$.

Observe that for any simple
root $\alpha$ of  $\Phi^+_{0, m}$,
\[ \langle\rho_m, \alpha^\vee\rangle = 1,\]
whereas \[ \langle\rho, \alpha^\vee\rangle = h(\alpha^\vee),\]
where $h(\alpha^\vee)$ denotes the height of $\alpha^\vee$ as a co-root of $\G$.

Therefore the constant $d_m=1$ if all simple roots of  $\Phi^+_{0, m}$
are of height $m$, an assertion we prove in the following proposition
in some cases. This analysis is completed in \cite{Polo}
to prove that for $\G$ a simple group,  at most one
simple root of  $\Phi^+_{0, m}$ is of height greater than $m$, in which case the
height of that extra simple root of  $\Phi^+_{0, m}$  is $2m$. Thus $d_m$ is the dimension of a fundamental representation
of the simply connected  cover of $\G(m)^{\der}$ which furthermore turns out to be minuscule.

\begin{prop} \label{classical}
  Let $\Phi$ be a simple root system of type $A_l$ ($l\geq 1$), $C_l$
  ($l\geq 2$) or $B_l$ ($l\geq 2$). Let $m\in \mathbb{N}$ with $m<h$.
  Assume that $m$ is odd when $\Phi$ is of type
  $B_l$. Let $\Phi^+$ be a choice of positive roots with $\Delta$ its
  simple roots.  Let $\Phi_m$ be the set
  $\{\alpha\in \Phi: m| {\rm ht}(\alpha)\}$.  The set
  $\Delta_m=\{\alpha\in \Phi^+: {\rm ht}(\alpha)=m\}$ is a basis for the
  root system $\Phi_m$.
     \end{prop}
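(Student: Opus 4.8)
The plan is to work explicitly with the standard realization of each root system in a Euclidean space, to describe the roots of a given height concretely, and then to verify the two defining properties of a base: that $\Delta_m$ is linearly independent (this is automatic since $\Delta_m \subseteq \Phi^+$ and any subset of $\Phi^+$ consisting of roots of a common height is linearly independent — in fact one checks $\Delta_m$ has the right cardinality to span $\Phi_m$), and that every root in $\Phi_m$ is a $\Z$-linear combination of elements of $\Delta_m$ with coefficients all of one sign. For type $A_{l}$, realize $\Phi = \{e_i - e_j : i \neq j\}$ with $\alpha_k = e_k - e_{k+1}$, so that $\mathrm{ht}(e_i - e_j) = j - i$ for $i < j$; then $\Delta_m = \{e_i - e_{i+m} : 1 \le i \le l+1-m\}$, and a positive root $e_i - e_j$ with $m \mid j-i$ is visibly the telescoping sum $(e_i - e_{i+m}) + (e_{i+m} - e_{i+2m}) + \cdots + (e_{j-m} - e_j)$, exhibiting it as a nonnegative combination of $\Delta_m$. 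Negatives of roots in $\Phi_m$ then have nonpositive coefficients, so $\Delta_m$ is a base.

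For type $C_l$, realize $\Phi = \{\pm e_i \pm e_j : i < j\} \cup \{\pm 2 e_i\}$ with simple roots $\alpha_k = e_k - e_{k+1}$ ($k < l$) and $\alpha_l = 2 e_l$; here $\mathrm{ht}(e_i - e_j) = j-i$, $\mathrm{ht}(e_i + e_j) = 2l - i - j$, and $\mathrm{ht}(2 e_i) = 2(l-i)+1$ — these last being the odd heights. One lists the roots of height exactly $m$ and checks directly (splitting into cases according to whether $m < l$ allows a long root of that height, i.e. whether $m$ is odd and $m \le 2l-1$, etc.) that they form a base; for instance, when $m$ is odd there is exactly one long root of height $m$, namely $2 e_{(2l-m+1)/2}$, and the remaining roots of height $m$ are of the form $e_i - e_j$ or $e_i + e_j$, and I would check the telescoping/decomposition relations expressing an arbitrary height-multiple-of-$m$ positive root in terms of them. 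Type $B_l$ is entirely analogous with $\alpha_l = e_l$ the short simple root; the hypothesis that $m$ is \emph{odd} is exactly what is needed so that the short roots $e_i$ (which have odd height $2(l-i)+1$) interact correctly — when $m$ is even the height-$m$ roots in $B_l$ genuinely fail to be a base, which is why that case is excluded here and handled in \cite{Polo}.

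The routine-but-slightly-delicate heart of the argument is the decomposition step in types $B_l$ and $C_l$: one must show every positive root $\alpha$ with $m \mid \mathrm{ht}(\alpha)$ can be written with all-nonnegative coefficients in terms of $\Delta_m$, and this requires a short combinatorial case analysis (does $\alpha$ involve the long/short simple root or not; if $\mathrm{ht}(\alpha) = km$, peel off summands of height $m$ one at a time, checking at each stage that what remains is still a positive root with height a multiple of $m$). This peeling argument is where I expect the only real friction — for $A_l$ it is transparent telescoping, but for $B_l, C_l$ one has to be careful that subtracting a height-$m$ element of $\Delta_m$ from a root keeps us inside $\Phi_m^+$ and does not overshoot into a non-root lattice vector. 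Once the decomposition is established, linear independence is immediate: a base of a rank-$r$ sub-root-system must have exactly $r$ elements, and by the decomposition $\Delta_m$ spans $\Phi_m$ over $\Q$, while on the other hand the number of distinct heights $\le$ any bound forces $|\Delta_m|$ to equal $\mathrm{rank}\,\Phi_m$ (alternatively, note all elements of $\Delta_m$ have the same height $m$ and a positive-root set all of whose members share a height is automatically independent). Finally, to conclude I would invoke the standard characterization: a subset of a root system that is linearly independent and such that every root of the sub-system is a nonnegative-integer or nonpositive-integer combination of it is a base for that sub-system (Bourbaki), giving the claim.
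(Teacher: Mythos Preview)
Your plan is sound and for types $A_l$ and $B_l$ it matches the paper's proof almost exactly: the $A_l$ case is the same telescoping (connected subsets of $\Delta$ of length a multiple of $m$ break into length-$m$ pieces), and for $B_l$ with $m$ odd the paper does precisely the peeling induction you describe, writing a long root $\epsilon_i+\epsilon_j$ as $\gamma_1+\gamma_2$ with $\mathrm{ht}(\gamma_1)=m$ and $\gamma_2\in\Phi_m^+$ of smaller height (or showing $\mathrm{ht}(\gamma)=m$ outright).

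The genuinely different step is type $C_l$. Instead of the direct case analysis you outline, the paper observes that $C_l$ is a \emph{folding} of $A_{2l-1}$: realize $A_{2l-1}$ on basis vectors $f_{-l},\dots,f_{-1},f_1,\dots,f_l$, and pass to the quotient by the span of the $f_{-i}+f_i$. The image of the $A_{2l-1}$ root system is a $C_l$ root system, the image of the simple roots of $A_{2l-1}$ is the simple system for $C_l$, and crucially this quotient map \emph{preserves heights}. So every positive root of $C_l$ of height divisible by $m$ is the image of one in $A_{2l-1}$, and the already-proven $A_{2l-1}$ decomposition pushes forward. This bypasses all the parity and long/short casework you anticipate; it is considerably cleaner and explains conceptually why $C_l$ behaves as well as $A_l$ for every $m$ (not just odd $m$). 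Your direct approach would also work but costs more bookkeeping.

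One small remark on linear independence: your parenthetical that same-height positive roots are automatically independent is most cleanly justified by noting that a height-$m$ root cannot be a sum of two roots in $\Phi_m^+$ (each summand would have height $\ge m$), hence every element of $\Delta_m$ is simple in $\Phi_m^+$; this makes independence immediate and avoids any appeal to cardinality.
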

    \begin{proof}
      When $\Phi$ is of type $A_l$, the elements of $\Phi$ correspond to
      connected subsets of $\Delta$ hence
      any connected subset of $\Delta$ of length a multiple of $m$ is the disjoint
      union of  connected subset of $\Delta$ of length equal to $m$, proving that
      roots of length $m$ for $A_l$ generate all positive roots of length a multiple
      of $m$, proving the proposition in this case.

      Next, let
      $\Phi$ be a root system of type $C_l$ and let $\widetilde{\Phi}$
      be a root system of type $A_{2l-1}$. 
      Let $V$ be an Euclidean space of dimension $2l$, 
      with orthonormal basis $$\{f_{-l},f_{-l+1},\dots, f_{-1}, f_{1}
      ,f_{2},\dots, f_{l}\}.$$ Let $\widetilde{\Phi}$
      be the set 
      $$\{f_i-f_j:-l\leq i\neq j\leq l\}.$$
      The set of vectors $\widetilde{\Phi}$ is a root 
      system of type $A_{2l-1}$.
      Let $\widetilde{\Delta}$ be the set 
      $$\{f_{-l}-f_{-l+1},f_{-l+1}-f_{-l+2},\dots, f_{-1}-f_{1},
      f_1-f_2,\dots f_{l-1}-f_l\},$$
      and $\widetilde{\Delta}$ is a basis for $\widetilde{\Phi}$. 
       Let $V'$ be the 
      quotient of $V$ by the space spanned by 
      $\{f_{-i}+f_{i}:1\leq i\leq l\}$. 
      Let $\Phi$ be the 
      image of $\widetilde{\Phi}$ under the 
      natural map $\phi:V\rightarrow W$.  
      Note that $\Phi$ is a root system
      of type $C_l$  and  $\Delta=\phi(\widetilde{\Delta})$ is 
      a basis for $\Phi$ and thus
     ${\rm ht}(\phi(\alpha))={\rm ht}(\alpha)$, for all $\alpha\in \widetilde{\Phi}$. 
       Hence, for a root system of type $C_l$, the proposition 
       follows from the $A_{2l-1}$ case.

       We come to the case where the root system is of
      type $B_l$ and $m$ is odd. Using induction on the height of
      roots $\alpha\in \Phi_m$, we show that
      $\alpha=\sum_{\beta\in \Delta_m}n_\beta\beta$, for some
      $n_\beta\in \mathbb{N}$. The roots of the form
$$\epsilon_i=\sum_{i\leq k\leq l}\alpha,\ (1\leq i\leq l)$$
and
$$\epsilon_i-\epsilon_j=\sum_{i\leq k<j}\alpha_k\ (1\leq i<j\leq l)$$
correspond to connected subsets of the set of simple roots, and the
proposition follows.  Consider the long root of the form
$$\gamma=\epsilon_i+\epsilon_j=
\sum_{i\leq k<j}\alpha_k+2\sum_{j\leq k\leq l}\alpha_k,\ (1\leq
i<j\leq l).$$
Since $m$ is odd, we may assume that $0<j-i<m$. 
 If $l-j+1>m-(j-i)$, then we can write
$\gamma=\gamma_1+\gamma_2$ where
$$\gamma_1=\sum_{i\leq k<i+m}\alpha_k$$
and 
$$\gamma_2=\sum_{j\leq k<i+m}\alpha_k+2\sum_{i+m\leq k\leq l}\alpha_k.$$
Note that $\gamma_1, \gamma_2\in \Phi$ and
${\rm ht}(\gamma_2)<{\rm ht}(\gamma)$.  In the case where
$l-j+1 \leq m-(j-i)$, we note that
$${\rm ht}(\gamma)=2(l-j+1)+j-i\leq 2m-(j-i)<2m.$$
Hence, we get that ${\rm ht}(\gamma)=m$. This proves the proposition.
     \end{proof}

     \begin{remark}
       From the above proposition, we deduce that the constant $d_m$ is
       equal to $1$ for groups $G$ of type $A_n$ and $B_n$ (we need to
       switch the roles of $\G$ and $\wG$ in Proposition \ref{classical}).  When
     $m$ is odd and $m|2n$, we have $d_m=1$ for groups of type
     $C_n$. 
     \end{remark}
     
     \section{Examples} \label{examples}

     In this section we summarize the results obtained in later sections
     on $\G(m)$ for $\G$ a classical group. Theorem \ref{conj} therefore proves
      that when dealing with
     character theory of a classical group at a principal element, not only are we
     dealing with character theory of $\GL_a(\C)$ for some integer $a$, and classical groups of the same type
     as $\G$, but also classical groups of different types than $\G$, an observation which
     comes up in the work of Ayyer-Kumari in \cite{AK}.

     \vspace{1cm}
     
     \begin{center}\label{table_cen}
      \begin{tabular}{| m{2cm} |  m{8cm} | m{4cm} |}
        \hline
        Group  & $\G(d)$ &  \\
        \hline
      & &   \\
      $\GL_{n}(\C)$&  $\GL_a(\C)^d$ & $a = n/d$ \\
     & &  \\
      \hline
     & & \\
      $\SO_{2n+1}(\C)$  & $\GL_a(\C)^{d/2}$ & $d$ even, $a = 2n/d$ \\
     & & \\
      
      &  $\GL_{2a}(\C)^{(d-1)/2} \times \SO_{2a+1}(\C)$  & $d$ odd, $a=2n/d$   \\
 & & \\
      \hline
  & & \\    
      $\Sp_{2n}(\C)$ &  $\GL_a(\C)^{(d-2)/2} \times \SO_a(\C)
      \times \Sp_a(\C) $ & $d$ even, $a= 2n/d$ even \\
      & & \\
      &  $\GL_a(\C)^{(d-2)/2} \times \SO_{a+1}(\C) \times \Sp_{a-1}(\C) $ & $d$ even, $a= 2n/d$ odd, \\
      & & \\
      &  $\GL_a(\C)^{(d-2)/2} \times  \Sp_{2a}(\C) $ & $d$ odd, $a=2n/d$,\\
    & &   \\
      \hline
     & &  \\
      
      $\SO_{2n+2}(\C)$ &  $\GL_{a}(\C)^{(d-1)/2} \times \SO_{a+2}(\C)$  & $d$ odd, $a= 2n/d$,\\
      & & \\

      &  $\GL_a(\C)^{(d-2)/2} \times \SO_{a}(\C) \times \SO_{a+2}(\C) $ & $d$ even, $a= 2n/d$ even, \\
     & &  \\
               & $\GL_a(\C)^{(d-2)/2} \times \SO_{a+1}(\C) \times \SO_{a+1}(\C) $ & $d$ even, $a= 2n/d$ odd.\\
        \hline
      \end{tabular}
  \end{center}

\section{Explicit calculation for $\G_2$} \label{G_2}

We specialize Theorem \ref{conj} for $m=2$ for $\G_2$ in which case 
Theorem \ref{conj} gives a precise recipe the character of irreducible representations of $\G_2(\C)$ at the unique
conjugacy class of order 2 represented by $C_2$.

We take $\{\alpha_1,\alpha_2\}$  as a basis of the root system of $\G_2$
for which  the positive roots and coroots  following  Bourbaki are given as follows: \\

\begin{itemize}

\item [1)] $\alpha_1$ with $\alpha_1^\vee = \alpha_1$,
  \item [2)]$\alpha_2$ with $\alpha_2^\vee = \alpha_2/3$,
    \item[3)] $\alpha_3=\alpha_1+\alpha_2$ with $\alpha_3^\vee = \alpha_3 = \alpha^\vee_1+3\alpha_2^\vee$,
    \item[4)] $\alpha_4=2\alpha_1+\alpha_2$ with $\alpha_4^\vee= \alpha_4 =2 \alpha^\vee_1+3\alpha_2^\vee $,
    \item[5)] $\alpha_5=3\alpha_1+\alpha_2$ with $\alpha_5^\vee = \alpha_5/3 = \alpha^\vee_1+\alpha_2^\vee$,
    \item[6)] $\alpha_6=3\alpha_1+2\alpha_2$ with $\alpha_6^\vee = \alpha_6/3 = \alpha^\vee_1+2\alpha^\vee_2 $.
\end{itemize}

Let $\omega_1,\omega_2$ be the fundamental weights for $\G_2$, with
\[\la \omega_i, \alpha^\vee_j \ra = \delta_{i,j},  {\rm ~~~ for ~~~} 1 \leq i,j \leq 2. \]

Therefore for $\lambda =  k\omega_1+\ell \omega_2$  highest weight of a representation of $\G_2$,
we have $\lambda+\rho = (k+1)\omega_1 + (l+1)\omega_2$, and we find that,

\begin{itemize}

\item [1)] $\la \lambda+\rho, \alpha_1^\vee \ra = (k+1)$,
  \item [2)] $\la \lambda+\rho, \alpha_2^\vee \ra = (l+1)$,
    \item[3)] $\la \lambda+\rho, \alpha_3^\vee \ra = (k+1) +3(l+1) = k+3l+4$,
    \item[4)] $\la \lambda+\rho, \alpha_4^\vee \ra = 2(k+1) + 3(l+1)= 2k+3l+5$,
    \item[5)] $\la \lambda+\rho, \alpha_5^\vee \ra = (k+1) + (l+1) = k+l+2$,
    \item[6)] $\la \lambda+\rho, \alpha_6^\vee \ra = (k+1) + 2(l+1) = k+2l+3$,
\end{itemize}

\vspace{4mm}

Using the above information, we have the following cases.

\vspace{4mm}

\begin{enumerate}

\item   \{$k$ and $\ell$ are both odd\}:  In this case,
  $\la \lambda+\rho, \alpha_i^\vee \ra $ is even for all $i=1,2, \cdots ,6$.

\item   \{$k$ and $\ell$ are both even\}:  In this case,
$\la \lambda+\rho, \alpha_i^\vee \ra $ is even for $i=3,5$, giving the product of
these even values to be:
\[ (k+3l+4)(k+l+2).\]

\item \{$k$ is even and $l$ is odd\}:  In this case,
$\la \lambda+\rho, \alpha_i^\vee \ra $ is even for $i=2,4$, giving the product of
these even values to be:
\[ (l+1)(2k+3l+5).\]

\item \{$k$ is odd and $l$ is even\}:  In this case,
$\la \lambda+\rho, \alpha_i^\vee \ra $ is even for $i=1,6$, giving the product of
these even values to be:
\[ (k+1)(k+2l+3).\]

\end{enumerate}

Now, we note that:
$\la \rho, \alpha_i^\vee \ra $ is even for $i=3,5$, giving the product of
these even values to be 8.  Thus our theorem on character values on powers of the Coxeter element 
proves the following result. This result is proved earlier by different methods
in \cite{Re1}, and in \cite{CK}.

\begin{prop}
  Let $V_{k,l}$ be the highest weight representation of $\G_2(\C)$ with highest weight $k\omega_1+l\omega_2$, where $\omega_1,\omega_2$
  are the fundamental representations of $\G_2(\C)$, with character $\Theta_{k,l}(C_2)$ at the unique conjugacy class $C_2$ of order $2$.
  Then we have 
\[ \Theta_{k,l}(C_2)= \begin{cases} 
  0 , \hspace{5.5cm}   \text{if} \  k,l \  are \ odd \\
  \\
   {\frac{(k+l+2) \times (3l+k+4)}{8}} , \hspace{3cm}   \text{if} \  k,l \  are \ even \\

  \\
  -{ \frac{(k+1) \times (k+2l+3)}{8},} \hspace{3.5cm}   \text{if} \  k \ odd,l  \ even \\

  \\
  
-{ \frac{(l+1) \times (3l+2k+5)}{8},} \hspace{3.5cm}  \text{if} \  k \ even,l  \ odd 
\end{cases} \]
  
\end{prop}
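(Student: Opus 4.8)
The plan is to specialize Theorem \ref{conj} to the pair $(\G,m)=(\G_2,2)$. Since $\G_2$ has trivial center, $c(\G_2)=2\rho^\vee(-1)=1$, so the formula of that theorem reduces to $\Theta_\lambda(C_2)=(-1)^{\ell(w)}\, d_\lambda/d_m$. As in the proof of Theorem \ref{conj}, the ratio $d_\lambda/d_m$ collapses to
\[
\frac{d_\lambda}{d_m}=\frac{\prod_{\alpha\in\Phi^+_{\lambda,2}}\langle\lambda+\rho,\alpha^\vee\rangle}{\prod_{\alpha\in\Phi^+_{2}}\langle\rho,\alpha^\vee\rangle},
\]
because $|\Phi^+_{\lambda,2}|=|\Phi^+_{2}|$ and $w$ carries $(\Phi^+_{\lambda,2},\rho^\lambda_2)$ to $(\Phi^+_{2},\rho_2)$, so the powers of $m$ and the $\rho_2$-factors cancel. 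Here $\Phi^+_{2}=\{\alpha_3,\alpha_5\}$ with $\langle\rho,\alpha_3^\vee\rangle=4$ and $\langle\rho,\alpha_5^\vee\rangle=2$, so the denominator equals $8$; note also that $\Phi^+_2$ is of type $A_1\times A_1$, consistent with $d_m$ being the dimension of a minuscule fundamental representation of $\SL_2\times\SL_2$ (in fact $d_m=2$ here).

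First I would dispose of the vanishing. By Corollary \ref{inequality}, $\Theta_\lambda(C_2)\neq 0$ if and only if $\#\{\alpha\in\Phi^+: 2\mid\langle\rho,\alpha^\vee\rangle\}=\#\{\alpha\in\Phi^+: 2\mid\langle\lambda+\rho,\alpha^\vee\rangle\}$; the left count is $2$ (values $4$ and $2$ at $\alpha_3,\alpha_5$). Running a parity check through the six displayed values $\langle\lambda+\rho,\alpha_i^\vee\rangle$: if $k,\ell$ are both odd, all six are even, so the right count is $6>2$ and $\Theta_\lambda(C_2)=0$; if $k,\ell$ are both even the even ones are $i=3,5$; if $k$ even and $\ell$ odd they are $i=2,4$; if $k$ odd and $\ell$ even they are $i=1,6$. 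In each of these last three cases the count is $2$, so $\Theta_\lambda(C_2)\neq 0$ and $\Phi^+_{\lambda,2}$ equals $\{\alpha_3,\alpha_5\}$, $\{\alpha_2,\alpha_4\}$, $\{\alpha_1,\alpha_6\}$ respectively. Reading the numerator $\prod_{\alpha\in\Phi^+_{\lambda,2}}\langle\lambda+\rho,\alpha^\vee\rangle$ off the displayed values then gives $(k+3\ell+4)(k+\ell+2)$, $(\ell+1)(2k+3\ell+5)$, and $(k+1)(k+2\ell+3)$ respectively, each over $8$ — exactly the magnitudes in the statement.

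The only genuine point that remains is the sign $(-1)^{\ell(w)}$, and I would handle it in one of two equivalent ways. Concretely, one exhibits an admissible $w$ (one with $w(\lambda+\rho)=\rho+2\mu$, $\mu\in X_*(\wT)$, and $w(\Phi^+_{\lambda,2})=\Phi^+_{2}$): for $k,\ell$ even take $w=1$ (with $\mu=\lambda/2$), sign $+1$; for $k$ even, $\ell$ odd one checks $s_1$ sends $\alpha_2\mapsto\alpha_5$, $\alpha_4\mapsto\alpha_3$ and $\lambda+\rho\equiv\omega_1\mapsto\rho\pmod{2X^*(\T)}$, so $w=s_1$ works, sign $-1$; symmetrically $w=s_2$ works for $k$ odd, $\ell$ even, sign $-1$. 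Alternatively — and this is the cleanest — one simply evaluates \eqref{productformula} at $z=e^{\pi i/2}=i$: the odd-exponent factors each equal $1-(-1)^{\mathrm{odd}}=2$ and occur in equal numbers in numerator and denominator, the even-exponent factors limit ($z\to i$) to the ratio above, and the prefactor $z^{-2\langle\lambda,\rho^\vee\rangle}$ becomes $(-1)^{\langle\lambda,\rho^\vee\rangle}$; since $\langle\lambda,\rho^\vee\rangle=\tfrac12\sum_{\alpha\in\Phi^+}\langle\lambda,\alpha^\vee\rangle=3k+5\ell\equiv k+\ell\pmod 2$, the sign is $(-1)^{k+\ell}$, i.e. $+$ in the even–even case and $-$ in the two mixed cases. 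Combining signs, numerators, and the denominator $8$ yields the four stated formulas. The main obstacle is thus not conceptual but clerical: keeping the parity bookkeeping and the choice of $w$ (or the limit computation) organized so that the signs come out transparently.
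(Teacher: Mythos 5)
Your proposal is correct and follows essentially the same route as the paper: specialize Theorem \ref{conj} (equivalently the product formula) to $(\G_2,2)$, read off the parities of the six values $\langle\lambda+\rho,\alpha_i^\vee\rangle$ to identify $\Phi^+_{\lambda,2}$ in each of the four cases, and divide the product of the even values by $\prod_{\alpha\in\Phi^+_2}\langle\rho,\alpha^\vee\rangle=8$. Your explicit verification of the sign (via the choices $w=1,s_1,s_2$, or equivalently the prefactor $(-1)^{3k+5\ell}=(-1)^{k+\ell}$ from evaluating at $z=i$) is a step the paper leaves implicit, and both of your arguments for it check out.
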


\section{Character at a general torsion element} \label{general}

Most of the paper is written around the character theory at torsion elements
captured by the principal $\SL_2(\C)$. In this section, we pose the
following question, attributing it to the third author.

\begin{question} Let $(\G,\B,\T)$ be a triple consisting of a semisimple simply connected group $\G$, a Borel subgroup
  $\B$, and a maximal torus $\T \subset \B$. 
  Let $\pi_\lambda$ be the highest weight module corresponding to a dominant weight $\lambda: \T\rightarrow \C^\times$, with character $\Theta_\lambda$. Let  $x_0$ be a  torsion element of $\G$, of  order $d$ in the adjoint group of $\G$.
  Treat $\lambda\cdot \rho$ as the cocharacter:
    $\lambda \cdot \rho:   \C^\times \rightarrow \wT$. Then if the 
 dimension of the centralizer of    $(\lambda \cdot\rho)(e^{2\pi i/d})$ in $\wG$ is greater than   the 
   dimension of the centralizer of   $x_0$  in $\G$, then is it true that:
   \[ \Theta_\lambda(x_0)=0?\]

   If the 
 dimension of the centralizer of    $(\lambda\cdot \rho)(e^{2\pi i/d})$ in $\wG$ is equal to the 
   dimension of the centralizer of   $x_0$  in $\G$, then,
   $ \Theta_\lambda(x_0)$ is up to a constant, the dimension of an irreducible representation
   of a reductive group whose dual group is the connected component of identity of
   the centralizer of    $(\lambda \cdot\rho)(e^{2\pi i/d})$ in $\wG$?
       \end{question}

\begin{remark}
  This question is inspired by  findings in \cite{CK} on character theory at 2 torsion elements for classical groups.
  Let us
  treat a special case of the question about zeros of characters at regular elements when in the
  Weyl character formula, the denominator is nonzero, so at such elements, the character value is zero
  if and only if the Weyl numerator is zero at $x_0$. One way to force Weyl numerator to be zero is to
  force $(\lambda \cdot \rho)(x_0)$ which appears in the Weyl numerator
  to be invariant under a simple reflection (though neither $\lambda \cdot \rho$ nor
  $x_0$ are left invariant by any non-trivial element of the Weyl group!).
  In this case, we are given that the centralizer of    $(\lambda \cdot\rho)(e^{2\pi i/d})$ in $\wG$ has larger
  dimension than the dimension of $\wT$, hence   $(\lambda \cdot\rho)(e^{2\pi i/d})$ is centralized by
  an involution in the Weyl group $s_\alpha$.

  We will treat here the case when $x_0$ is not only an element of order $d$ in the adjoint group of $\G$,
  but in $\G$ itself. The general case is as treated in  Lemma 1 of \cite{Pr2}.
  Now, note that
  $(\lambda \cdot \rho)$ can be treated as a character not only of $\T(\C)$, but also of its $d$-torsion
  points, $\T[d]$,
  and that the character group of $\T[d]$ can be identified
  to $\wT[d]$. As the element  $(\lambda \cdot \rho)(e^{2\pi i/d})$ in $\wG$
  is invariant under $s_\alpha$,
  the character
$(\lambda \cdot \rho)$ of $\T[d]$ is also invariant under $s_\alpha$, in particular,
  $(\lambda \cdot \rho)(x_0)$ which appears in the Weyl numerator
  is
  invariant under $s_\alpha$. Thus the Weyl numerator is both invariant and skew-invariant under
  $s_\alpha$, hence $\Theta_\lambda(x_0)=0$; see the proof
of Theorem 2 in \cite{Pr2} for a similar argument.
  \end{remark}

\section{Adjoint representation restricted to principal $\rm{SL}_2(\mathbb C)$} \label{adjoint}

Let ${\rm Ad}:\G \rightarrow {\rm Aut}(\mathfrak g)$ be the adjoint representation of $\G$.  In this section
we derive a formula for the character
$\Theta_\g(z)$
of the adjoint representation when restricted to the diagonal element $ (z,z^{-1})$ of the
principal $\rm{SL}_2(\mathbb C)$. Let $\Delta=\{\alpha_1, \alpha_2,\dots, \alpha_l\}$
be the set of simple roots contained in $\Phi^+$. Note that the adjoint representation $\mathfrak g$ of $\G$ is an irreducible representation of $\G$ if $\G$
is simple and has the dominant weight, the highest root $\alpha_0$ of $\Phi$.
We recall that the height of $\alpha=\sum k_i\alpha_i \in \Phi$ is defined to be $ht(\alpha) = \sum k_i$. For any two weights $\lambda$ and $\mu$, we say that $\lambda \geq \mu$ if $\lambda-\mu \in \mathbb Z_{\geq 0}\alpha_1+\cdots \mathbb +\mathbb{Z}_{\geq 0}\alpha_l$.

The following theorem calculates the character of the adjoint representation. It may be noticed that although
Theorem \ref{prod} expresses the character of a representation as a product over the  positive roots, in this
case the product simplifies considerably, and is a product of at most three terms! That the  expression simplifies so much is not obvious also
from the general theorem of Kostant on the restriction of the adjoint representation to the principal $\SL_2(\C)$ which expresses it
as a sum of $\ell$ irreducible representations of $\SL_2(\C)$, where $\ell$ is the rank of $\G$.

\vspace{5mm}

\begin{theorem} \label{Adjoint}
\end{theorem}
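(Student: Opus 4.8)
The goal is to show that $\Theta_\g(z)$, the character of the adjoint representation restricted to the principal $\SL_2(\C)$, has a remarkably short product expression — at most three factors of the form $(1\pm z^d)$. The natural starting point is Theorem \ref{prod} applied to the highest weight $\lambda = \alpha_0$, the highest root (for $\G$ simple), giving
\[
\Theta_\g(z) = z^{-2\langle\alpha_0,\rho^\vee\rangle}\,
\frac{\prod_{\alpha\in\Phi^+}\bigl(1-z^{2\langle\alpha_0+\rho,\alpha^\vee\rangle}\bigr)}
     {\prod_{\alpha\in\Phi^+}\bigl(1-z^{2\langle\rho,\alpha^\vee\rangle}\bigr)}.
\]
Since $\langle\rho,\alpha^\vee\rangle = \mathrm{ht}(\alpha^\vee)$ ranges over the exponents-related multiset for the dual root system, and $\langle\alpha_0+\rho,\alpha^\vee\rangle = \langle\rho,\alpha^\vee\rangle + \langle\alpha_0,\alpha^\vee\rangle$, the first step is to understand the multiset $\{\langle\alpha_0,\alpha^\vee\rangle : \alpha\in\Phi^+\}$. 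Here one uses the classical fact that $\langle\alpha_0,\alpha^\vee\rangle \in \{-1,0,1,2\}$ for all roots $\alpha$ (with the value $2$ attained only at $\alpha = \alpha_0$), because $\alpha_0$ is a dominant weight and the adjoint representation is minuscule-like in the relevant sense: its nonzero weights form a single Weyl orbit. So the numerator's exponent multiset is obtained from the denominator's by shifting each term by one of these small amounts, and most terms cancel.

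Second, I would organize the cancellation combinatorially. Write the denominator as $\prod_{j}(1-z^{2 e_j^\vee})$ where $e_j^\vee$ runs over $\{\mathrm{ht}(\alpha^\vee)\}$; by a result going back to Kostant (and used implicitly via the Weyl denominator for $\wG$), this multiset is $\{1^{(a_1)}, 2^{(a_2)}, \dots\}$ where $a_k = \#\{\alpha\in\Phi^+ : \mathrm{ht}(\alpha^\vee) = k\}$, and these $a_k$ form a partition whose conjugate records the dual exponents. When we shift by $\langle\alpha_0,\alpha^\vee\rangle$, the roots $\alpha$ with $\langle\alpha_0,\alpha^\vee\rangle = 0$ contribute identical factors to numerator and denominator and cancel outright; the roots with $\langle\alpha_0,\alpha^\vee\rangle = 1$ shift $\mathrm{ht}(\alpha^\vee)$ up by one, and (this is the crux) the combinatorics of heights makes these telescope, leaving only boundary terms; the unique root $\alpha_0$ itself contributes a shift by $2$; and the negative roots direction ($\langle\alpha_0,\alpha^\vee\rangle = -1$) must also be accounted for since $\Phi^+$ includes $\alpha$ with $\langle\alpha_0,\alpha^\vee\rangle = -1$ only when $-\alpha_0$-related — actually one must be careful here and track exactly which positive roots pair negatively with $\alpha_0$. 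After the dust settles, the surviving factors are controlled by the few extreme values of the height function, which is why only about three factors remain. I would also fold in the prefactor $z^{-2\langle\alpha_0,\rho^\vee\rangle} = z^{-2\,\mathrm{ht}(\alpha_0^\vee)}$ and use the symmetry $\Theta_\g(z) = \Theta_\g(z^{-1})$ to rewrite everything with nonnegative exponents and signs $(1\pm z^d)$.

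Third, because the paper states (and concedes) that this is proved case-by-case and that no uniform argument is known, I would in practice carry out the above reduction type-by-type through the Cartan classification ($A_\ell, B_\ell, C_\ell, D_\ell, E_6, E_7, E_8, F_4, G_2$), in each case listing the multiset of dual coroot heights (equivalently reading off the dual exponents and their multiplicities from the known tables), writing the shifted multiset for $\alpha_0 + \rho$, cancelling, and recording the residual product. The reductive (non-simple) case then follows since the adjoint representation of a product is the external direct sum, and a central torus contributes trivially (the principal $\SL_2$ lands in the derived group), so $\Theta_\g$ for a general reductive $\G$ is $\dim\ZZ^0(\G)$ plus the sum of the simple-factor contributions — but the "at most three factors" claim is really a statement about each simple factor.

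\textbf{Main obstacle.} The hard part is the middle step: showing that after the shift by $\langle\alpha_0,\cdot\rangle$ the enormous product over $\Phi^+$ collapses to three terms. The shifts $+1$ do \emph{not} simply telescope in a single mechanical way across all types — the multiplicity profile $\{a_k\}$ of coroot heights, and the precise count of positive roots pairing to $\pm1$ with $\alpha_0$, differ enough between the classical series and the exceptionals that a genuinely uniform cancellation argument is elusive (as the authors note). So I expect the proof to remain a case check, with the conceptual content being: (i) identifying in each type which three factors survive, and (ii) noticing the pattern — e.g. factors like $(1-z^{2h})$, $(1+z^h)$, or $(1-z^{2d_0})$ for a distinguished exponent $d_0$ — without, unfortunately, a root-system-free explanation of why exactly three suffice.
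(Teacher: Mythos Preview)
Your strategy is sound and would produce a proof, but it diverges from the paper's in the classical cases, and for the exceptionals it is missing the paper's key organizational device.

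For the classical groups the paper does \emph{not} go through the product formula at all. Instead it uses the explicit description of the principal $\SL_2(\C)$ in the standard representation: for $\SL_{n+1}(\C)$ the principal $\SL_2$ acts as $\Sym^n(\C^2)$, so the adjoint representation restricts to $\Sym^n(\C^2)\otimes\Sym^n(\C^2)-1$, whose character at $\mathrm{diag}(z,z^{-1})$ is $\bigl(\tfrac{z^{n+1}-z^{-(n+1)}}{z-z^{-1}}\bigr)^2-1$, and this factors in one line. For $\Sp_{2n}$ and $\SO_m$ one uses that the adjoint representation is $\Sym^2 V$ or $\Lambda^2 V$ with $V$ again an explicit $\Sym^k(\C^2)$. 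This bypasses all height-multiset bookkeeping and gets the answer in a few lines per type. Your product-formula route would work for the classical types too, but it is substantially more laborious there.

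For the exceptional groups the paper does start from the product formula and cancels the $\langle\alpha_0,\alpha^\vee\rangle=0$ terms, as you do. But rather than attempting a telescoping over the whole height profile, the paper organizes the surviving roots via the maximal parabolic $P=MN$ attached to the unique simple root adjacent to $\alpha_0$ in the extended Dynkin diagram (e.g.\ the $E_7$ Levi inside $E_8$): the roots with $\langle\alpha_0,\alpha^\vee\rangle\neq 0$ are exactly those in $N$, and the height function on $N$ is the difference of the known height profiles of $\G$ and of the Levi (both read off from the exponents). This gives a concrete, finite computation per exceptional type without having to track shifts root by root.

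Two small corrections to your write-up: (i) since $\alpha_0$ is dominant and $\alpha^\vee$ for $\alpha\in\Phi^+$ is a nonnegative combination of simple coroots, one always has $\langle\alpha_0,\alpha^\vee\rangle\geq 0$, so the ``$-1$'' case you worried about does not occur; (ii) the nonzero weights of the adjoint representation form a single Weyl orbit only in the simply-laced case, so your ``minuscule-like'' remark is not correct in types $B,C,F,G$ --- though this does not actually damage your argument, which only needs the bound $\langle\alpha_0,\alpha^\vee\rangle\in\{0,1,2\}$.
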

  \begin{tabular}{ c c l l }

   $\Theta_\g(z)$  & = & $\displaystyle z^{-2n}\frac{(1-z^{2n})(1-z^{2n+4})}{(1-z^2)^2},$ & {\rm ~~if} $\G=A_n$.  \\
  
   & = & $\displaystyle z^{-(4n-2)}\frac{(1-z^{4n})(1-z^{4n+2})}{(1-z^2)(1-z^4)},$ & {\rm ~~if} $\G=B_n$ or $C_n$.  \\  

   & = & $\displaystyle  z^{-(4n-6)}\frac{(1-z^{2n})(1+z^{2n-4})(1-z^{4n-2})}{(1-z^2)(1-z^4)},$ & {\rm ~~if} $\G=D_n$. \\

    & = & $ \displaystyle z^{-22}\frac{(1-z^{16})(1-z^{18})(1-z^{26})}{(1-z^2)(1-z^6)(1-z^8)},$ & {\rm ~~if} $\G=E_6$. \\

  & = & $\displaystyle  z^{-34}\frac{(1-z^{24})(1-z^{28})(1-z^{38})}{(1-z^2)(1-z^8)(1-z^{12})}$, & {\rm ~~if} $\G=E_7$. \\

   &= & $\displaystyle z^{-58}\frac{(1-z^{40})(1-z^{48})(1-z^{62})}{(1-z^2)(1-z^{12})(1-z^{20})}$, & {\rm ~~if} $\G=E_8$. \\

  &= & $\displaystyle  z^{-22}\frac{(1-z^{16})(1-z^{24})(1-z^{26})}{(1-z^2)(1-z^8)(1-z^{12})}$, & {\rm ~~if} $\G=F_4$. \\

   & = & $\displaystyle z^{-10}\frac{(1-z^{14})(1-z^{16})}{(1-z^2)(1-z^8)}$, & {\rm ~~if} $\G=\G_2$. 
\end{tabular}

  \begin{proof} The principal $\SL_2(\C)$ inside classical groups is
    very simple to define. For example, in the case of $\SL_{n+1}(\C)$, the principal
    $\SL_2(\C)$ inside  $\SL_{n+1}(\C)$ is the embedding through $\Sym^n(\C^2)$, and therefore the adjoint representation
     of $\SL_{n+1}(\C)$ restricted to   the principal
     $\SL_2(\C)$ is  \[\Sym^n(\C^2) \otimes  \Sym^n(\C^2)- 1,\]
     whose character at the diagonal element $(z,z^{-1})$ of $\SL_2(\C)$ is:
     \[ \frac{(z^{n+1}-z^{-(n+1)})^2}{(z-z^{-1})^2} - 1 =
     z^{-2n}\frac{(1-z^{2n})(1-z^{2n+4})}{(1-z^2)^2}.\]

     Similar analysis can be done in the case of $\Sp(2n,\C)$ and $\SO(n,\C)$ where the adjoint representation is respectively $\Sym^2(V)$, and $\Lambda^2(V)$, and $V$ itself in the case of $\Sp(V)$, $\dim V = 2n$,
     is a representation of $\SL_2(\C)$
     which is $\Sym^{2n-1}(\C^2)$, and in the case of
     $\SO(2n+1,\C), V = \Sym^{2n}(\C^2)$, whereas for the case of
     $\SO(2n,\C), V = \Sym^{2n-2}(\C^2)$.

     We will not give detailed proof based on Theorem \ref{prod} in the case of exceptional groups except to give a flavor of the arguments for the case of $E_8$. For $E_8$, one knows that the adjoint representation with highest weight
     $\alpha_0$ is the fundamental representation $\omega_8 = \alpha_0$. 

The product formula derived in Theorem \ref{prod} for a dominant weight $\lambda$ of $\G$ is the following: $$\Theta_{\lambda}(z)=z^{-2\langle\lambda, \rho^\vee\rangle}
\dfrac{\prod_{\alpha\in \Phi^+}(1-z^{2\langle \lambda+\rho, \alpha^\vee\rangle})}
{\prod_{\alpha\in \Phi^+}(1-z^{2\langle \rho, \alpha^\vee\rangle})},$$
which specializing to the adjoint representation for which
we get that 

$$\Theta_\g(z)=z^{-2\langle\alpha_0, \rho^\vee\rangle}
\dfrac{\prod_{\alpha\in \Phi^+}(1-z^{2\langle \alpha_0+\rho, \alpha^\vee\rangle})}
{\prod_{\alpha\in \Phi^+}(1-z^{2\langle \rho, \alpha^\vee\rangle})}$$

Note that $\langle \rho, \alpha^{\vee}\rangle=ht(\alpha^{\vee})$ for all $\alpha \in \Phi^+$. We also have $\langle \rho, \alpha_0^{\vee}\rangle=ht(\alpha_0)=h-1$, where $h$ is the Coxeter number of $\Phi$. So we get that

$$\Theta_\g(z)=z^{-2(h-1)}
\prod_{\alpha\in \Phi^+} \frac{(1-z^{2(ht(\alpha^{\vee})+\langle \alpha_0, \alpha^\vee\rangle)})}{(1-z^{2ht(\alpha^{\vee})})}.$$

If $\alpha \in \Phi^+$ such that $\langle \alpha_0, \alpha^\vee \rangle=0$, then the term $(1-z^{2\langle \rho, \alpha^\vee\rangle})$ in the numerator and the denominator cancels out. So we have 

\begin{equation}\label{char}
\Theta_\g(z) =z^{-2(h-1)}
\prod_{\{\alpha\in \Phi^+: \langle \alpha_0, \alpha^\vee\rangle \neq 0\}} \frac{(1-z^{2(ht(\alpha^\vee)+\langle \alpha_0, \alpha^\vee\rangle)})}{(1-z^{2ht(\alpha^\vee)})}.
\end{equation}

As the highest weight for $E_8$ is \[\alpha_0= 2 \alpha_1+3\alpha_2 + 4 \alpha_3 + 6 \alpha_4 + 5 \alpha_5 + 4 \alpha_6+ 3 \alpha_7 + 2 \alpha_8,\]
$\alpha_8$ appears in any positive root of $E_8$ with coefficient 0, 1, 2.
The set of positive $\alpha$'s with the coefficient of $\alpha_8$ being 0 is nothing but the set of positive roots of $E_7$ which is generated by the simple roots $\{\alpha_1,\alpha_2,\cdots, \alpha_7\}$ inside $E_8$. The group $E_7$ is the semi-simple part of a maximal Levi subgroup of $E_8$ of a parabolic $P=MN$ inside $E_8$ with $N$ a two-step nilpotent group in which
the root subgroup $\alpha_0$ is the center, the quotient by which is a 56 dimensional irreducible representation of $E_7$, for which all the roots have $\alpha_8$ coefficient equal to 1.

Since one knows the height function on the set of all positive roots of any reductive group (as the dual partition afforded by the exponents of the group, which in the case of $E_8$ is $\{1,7, 11,13, 17,19, 23, 29\}$), therefore also for
the roots in $N$ (being the difference of the height functions on $E_8$ and $E_7$). As all roots in $N$ except $\alpha_0$ have the coefficient of $\alpha_8$ equal to
1, the equation \ref{char} can be valuated, yielding the claimed result for $E_8$. \end{proof}

 \section{Restriction to principal $\SL_2(\C)$ determines the representation?} \label{converse}

 We begin with the following proposition -- which follows by a direct computation using Theorem \ref{prod}, whose proof will be omitted. 

 \begin{prop}\label{unique}
For any simple group $\G$ of rank $\leq 4$, an irreducible representation of
 $\G$ is uniquely determined by its restriction to the principal $\SL_2(\C)$ (up to an outer automorphism).
   \end{prop}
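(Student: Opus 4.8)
The plan is to reduce the statement to a finite, mechanical check using the product formula of Theorem~\ref{prod}. By the discussion in Section~\ref{two}, it suffices to treat the simply connected simple groups of rank at most $4$, namely $A_1,A_2,A_3,A_4,B_2,B_3,B_4,C_3,C_4,D_4,\G_2,F_4$ (with $B_2=C_2$ and $A_3=D_3$ identified). For each such $\G$, Theorem~\ref{prod} gives
\[
\Theta_\lambda(z)=z^{-2\langle\lambda,\rho^\vee\rangle}\,
\frac{\prod_{\alpha\in\Phi^+}(1-z^{2\langle\lambda+\rho,\alpha^\vee\rangle})}{\prod_{\alpha\in\Phi^+}(1-z^{2\langle\rho,\alpha^\vee\rangle})},
\]
a Laurent polynomial in $z$ that is symmetric under $z\mapsto z^{-1}$. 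The key observation is that $\Theta_\lambda(z)$ is completely determined by the multiset of nonzero integers $\{\pm 2\langle\lambda+\rho,\alpha^\vee\rangle : \alpha\in\Phi^+\}$ together with the fixed denominator data $\{\pm 2\langle\rho,\alpha^\vee\rangle\}$; equivalently, after clearing the denominator, by the multiset $M(\lambda)=\{\langle\lambda+\rho,\alpha^\vee\rangle : \alpha\in\Phi^+\}$ of positive integers (the ``content multiset'' of $\lambda$). So the claim becomes: for these twelve root systems, the map $\lambda\mapsto M(\lambda)$ from dominant weights to multisets is injective up to the action of the outer automorphism group (which permutes the $\alpha^\vee$ and hence acts on the $\langle\lambda+\rho,\alpha^\vee\rangle$).

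First I would record, for each $\G$, the explicit list of positive coroots $\alpha^\vee$ written in terms of the fundamental coweights, exactly as is done for $\G_2$ in Section~\ref{G_2}; then $\langle\lambda+\rho,\alpha^\vee\rangle$ is an explicit linear form $\sum_i c_i^{(\alpha)}(k_i+1)$ in the coordinates $k_i\ge 0$ of $\lambda=\sum k_i\omega_i$. Next I would argue injectivity of $\lambda\mapsto M(\lambda)$ by extracting $\lambda$ back from $M(\lambda)$: the simple coroots $\alpha_i^\vee$ are distinguished in $\Phi^+$ (e.g.\ as the coroots of minimal height, or — more robustly — one can recover the values $\langle\lambda+\rho,\alpha_i^\vee\rangle=k_i+1$ from the combinatorics of which sums of small elements of $M(\lambda)$ again lie in $M(\lambda)$, since sums of coroots that are coroots correspond to incidences in the root system). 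A clean uniform trick: the largest element of $M(\lambda)$ is $\langle\lambda+\rho,\alpha_0^\vee\rangle$ where $\alpha_0^\vee$ is the highest coroot (in the simply-laced and $\G_2,F_4$ cases one must be slightly careful that $\alpha_0^\vee$ is indeed the coroot pairing maximally with every dominant $\lambda+\rho$, which holds because $\alpha_0^\vee$ has all coordinates positive in the coweight basis — true for all our cases), and then one peels off coroots in decreasing order of height, at each stage the linear forms involved having been pinned down by earlier stages. Because the rank is $\le 4$ there are at most $4$ unknowns $k_1,\dots,k_4$, and $|\Phi^+|\ge$ rank linear forms among which one readily finds $4$ (or $3$ plus one relation) that are independent over $\Q$; solving that linear system recovers $\lambda$, with the only ambiguity being the relabelling of coordinates induced by a diagram automorphism, which is precisely the outer automorphism indeterminacy allowed in the statement.

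The main obstacle is not conceptual but bookkeeping: one must verify in each of the twelve cases that the ``peeling'' recovery procedure is unambiguous — i.e.\ that no two distinct dominant $\lambda,\lambda'$ (not related by an outer automorphism) produce the same multiset $M(\lambda)=M(\lambda')$. The genuinely dangerous cases are the small-rank ones with few positive roots and, conversely, those with extra symmetry: $A_2$ (Weyl orbit size small, outer automorphism swaps $\omega_1\leftrightarrow\omega_2$), $A_3\cong D_3$, $D_4$ (triality enlarges the outer automorphism group to $S_3$, so one must check injectivity only modulo $S_3$), and $A_4$. For $D_4$ in particular I would exploit triality to reduce the number of sub-cases, and for the multiply-laced cases $B_n,C_n,F_4,\G_2$ I would use that long and short coroots have visibly different coefficient patterns so the forms $\langle\lambda+\rho,\alpha^\vee\rangle$ separate cleanly. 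Since the paper says the proof is by ``a direct computation using Theorem~\ref{prod}'' and omits it, I would present the above reduction to the linear-algebra recovery, carry out one representative case (say $F_4$ or $D_4$) in detail to illustrate, and remark that the remaining cases are entirely analogous; the finiteness of the check (twelve root systems, each recovery a bounded linear computation) is what makes the omission legitimate.
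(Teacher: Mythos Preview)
Your proposal is correct and is exactly the approach the paper indicates: the paper says only that the result ``follows by a direct computation using Theorem~\ref{prod}, whose proof will be omitted'', and you correctly reduce via the product formula to showing that the unlabeled multiset $M(\lambda)=\{\langle\lambda+\rho,\alpha^\vee\rangle:\alpha\in\Phi^+\}$ determines $\lambda$ up to outer automorphism, to be checked for each of the finitely many root systems of rank $\le 4$. One small caveat: the ``solve the linear system'' step is too quick as written, since $M(\lambda)$ comes unlabeled and you cannot a priori match values to coroots (this is precisely the turnpike-type ambiguity discussed in Section~\ref{converse} for $\SL_n$), but you already recognize that the real content is the per-root-system verification, which is all the paper asserts.
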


 However,  Proposition \ref{unique} fails in general,  the
 smallest group for which it fails being $\SL_6(\C)$. Thus there exists irreducible
 representations $\pi_\lambda$ and $\pi_\mu$, 
 with highest weights $\lambda$ and $\mu$ of  $\SL_6(\C)$ such that 
 $\pi_\lambda$ and $\pi_\mu$ have isomorphic restriction
 to a principal ${\rm SL}_2(\mathbb C)$ subgroup but, $\lambda$
 and $\mu$ are not related to each other by the outer automorphism of
 $\SL_6(\C)$.

 For instance, let 
 $$\lambda=3\varpi_3+\varpi_4+2\varpi_5$$
 and 
 $$\mu=4\varpi_2+\varpi_4+\varpi_5$$
 be dominant integral weights of ${\rm SL}_6(\mathbb C)$ written as a sum
 of fundamental weights. Using the 
 product formula from Theorem \ref{prod}, the 
 representations 
 $\pi_\lambda$ and $\pi_\mu$ have isomorphic restrictions
 to the principal ${\rm SL}_2(\mathbb C)$ subgroup. However,
 $\pi_\lambda$ is not isomorphic to $\pi_\mu^\vee$.

 In its failure to determine an irreducible representation of $\G$ (up to an outer automorphism),
the restriction problem to the principal $\SL_2(\C)$  contributes to a well-known question:

 \begin{enumerate}
 \item Are there irreducible representations of a simple group $\G$   (which are distinct even
   up to an outer automorphism) of the same dimension?

   \item Are there any constructions of irreducible representations of a simple group $\G$ of the same dimension?
   \end{enumerate}

 For the case of $\GL_n(\C)$ where irreducible representations are parameterized by $n$-tuple of (dominant) integers
 $\lambda = \{\lambda_1 \geq \lambda_2 \geq \cdots \lambda_n\}$, we take $\rho= \{n-1 > n-2 > \cdots > 1 > 0\}$,
 and $\lambda+\rho = \{l_1>l_2> \cdots > l_n\}$.

 In this case the question about which irreducible representations of $\SL_n(\C)$ have the
 same restriction to the principal $\SL_2(\C)$ becomes (by Theorem \ref{prod}):

 \vspace{5mm}
 
 {\bf Question:} Given an $n$-tuple of distinct integers  $X= \{l_1>l_2> \cdots > l_n\}$, if one knows
 the (multiset) set $X-X = \{ l_i-l_j| 1 \leq i,j \leq n\}$, does it determine $X$ up to translation:
 $X\rightarrow X+a$,
 and reflection $X\rightarrow b-X$ for integers $a,b$? Said differently, given subsets $X,Y$ of $\Z$ of cardinality
 $n$, such that
 \[ X-X = Y-Y,\]
 is $X= Y+b$, or $X=a-Y$ for some $a,b \in \Z$?

 \vspace{5mm}

 To make a different formulation of this question, associate a Laurent polynomial $f_X (z)$  in $z$
 to any set $X$ of integers as follows,
 \[f_X (z)= \sum  z^{l_i},\]
where $l_i$ are the integers in the set $X$.

As,
$f_{X-X}(z) = f_X(z) f_X(z^{-1}),$
to get the equality of  the difference sets $X-X = Y-Y$,
all we want is:
\[ f_{X-X}(z) = f_{Y-Y}(z) =  f_X(z) f_X(z^{-1}) =  f_Y(z) f_Y(z^{-1}).\]

Thus, given $X$, one needs to factorize the Laurent polynomial 
$f_X(z) f_X(z^{-1})$, which is essentially the same as to factories  $f_X(z)$ over integers.
In particular, if $f_X(z)$ is an irreducible Laurent polynomial, then
$X=Y$ up to the equivalence we have imposed on our sets (translation and reflection).

We illustrate this point of view with the example for $n =6$ given earlier.
In this case

\begin{eqnarray*}
  X & = &  \{11, 10, 9, 5, 3, 0\}, \\
Y & = & \{11,10, 5,4, 2,0\}, \\
f_X(z) & = & z^{11}+z^{10}+z^9+z^5+z^3+1= (z^2+1)(z^4+z^3+1)(z^5-z^2+1),\\
f_Y(z) & = & z^{11}+z^{10}+z^5+z^4+ z^2+ 1= (z^2+1)(z^4+z^3+1)(z^5-z^3+1).
\end{eqnarray*}

It is clear to see that:
\[f_X(z) f_X(z^{-1}) =  f_Y(z) f_Y(z^{-1}),\]
where the important thing to notice is that
$(z^5-z^2+1)$  and $(z^5-z^3+1)$ are related to each other by $g(z) \rightarrow z^5  g(z^{-1})$.

 Thus given a factorization of $f_X(z)$ as a product of irreducible Laurent polynomials in $z$ as $\prod f_i$,
we take $f_Y$ to be $ \prod g_i$,
where $f_i = g_i$ or 
$f_i = z^{\deg f_i} g_i(z^{-1})$, with the extra requirement that the polynomial $f_Y$
so obtained have  integral coefficients
which are either 0 or 1, a condition which is not easy to interpret,
will give rise to the desired equality $f_X(z) f_X(z^{-1}) =  f_Y(z) f_Y(z^{-1})$.

In any case, here is a construction of sets $X,Y$ with $X-X=Y-Y$, giving rise to representations of $\GL_n(\C)$
with isomorphic restrictions to $\SL_2(\C)$, in particular with the same dimensions.

Let $A,B$ be sets of integers such that $A+B$, and hence $A-B$, are sets of integers (instead of multisets),
\begin{eqnarray*}
X& = & A+B,\\
Y &=&  A-B,
\end{eqnarray*}
has the property:
\begin{eqnarray*}
X-X &=& A+B-A-B,\\
Y-Y &=&  A-B-A+B = X-X,
\end{eqnarray*}
giving a family of examples with $X-X=Y-Y$.

On the other hand, for $X=A+B$, $|X|=|A| |B|$, and hence the above construction does not yet
give the degree 6 example given above which had a factorization as a 
product 

\[f_X(z)= z^{11}+z^{10}+z^9+z^5+z^3+1= (z^2+1)(z^4+z^3+1)(z^5-z^2+1),\]
which cannot be of the form $f_B f_A$ because of the presence of negative coefficients in
$(z^5-z^2+1)$.

\newpage 

\part*{\centering
  {Part B: Centralizers of Finite order elements}}
\section{Review of Kac-coordinates} \label{Kac}
Let $\G$ be a simple and simply connected complex algebraic group.
Let $\B$ be a Borel subgroup of $\G$ and let $\T$ be a maximal torus of $B$.
Let $X^\ast(\T)$ be the character group of $\T$ and let $X_\ast(\T)$ be the
co-character group of $\T$. Let $\Phi$ be the set of roots of $\G$ with
respect to $\T$, and let $\{\alpha_i: 1\leq i\leq \ell\}$ be the set of simple roots
for the choice of $\B$.  We follow Bourbaki (see \cite[Planches I-VII, Chapter
4-6]{bourbaki_lie_4_6}) to index the set of simple roots.
Let $\alpha_0=-\sum_{i=1}^\ell a_i\alpha_i$ be the negative of the highest
root for the choice of $\T$ and $B$.  Let $a_0=1$ and note that
$a_0+a_1+\cdots+a_\ell$ is the Coxeter number of $\G$, denoted by $h$.

For a positive integer $m$, let $(s_0, s_1,s_2,\dots, s_\ell)$ be a tuple
of non-negative integers such that
        \begin{equation}\label{kac-co}
            \sum_{i=0}^\ell a_is_i=m\ 
        \text{and}\  (s_0, s_1,\dots, s_\ell)=1.
      \end{equation}
        For any such tuple $s=(s_0, s_1,\dots, s_\ell)$, there 
        exists a unique element $\tilde{x}(s)$ in $\T$ such that,
        for all $\chi\in X^\ast(\T)$ with $\chi=\sum_{i=1}^\ell c_i\alpha_i$ and $c_i\in \mathbb{Q}$, we have
        $$\chi(\tilde{x}(s))={\rm exp}\left(\dfrac{2\pi i\sum_{i=1}^\ell c_is_i}{m}\right).$$
Note that for any simple root $\alpha_k$, for $1\leq k\leq \ell$, we have
$$\alpha_k(\tilde{x}(s))={\rm exp}\left(\dfrac{2\pi i s_k}{m}\right),$$
and the above equality, for all simple roots $\alpha_k$, determines
$\tilde{x}(s)$ uniquely when $Z(\G)$ is trivial.

Kac showed that any finite order element in $\G$ is conjugate to a
unique element in $\T$ of the form $\tilde{x}(s)$, for some tuple
$(s_0, s_1,\dots, s_\ell)$ satisfying \eqref{kac-co} and for some $m$
(see \cite{serre_kac} and \cite{reeder_torsion} for an exposition and
further references).  The tuple $(s_0, s_1,\dots, s_\ell)$ is called
the Kac-coordinates of the element $\tilde{x}(s)$ as well as of the
corresponding elements in any quotient of $\G$ under a finite central
subgroup.  The centraliser of $\tilde{x}(s)$ is a connected reductive
subgroup of $\G$ containing $\T$ and a basis of its root system is
given by
$$\{\alpha_i:s_i= 0, 0\leq i\leq \ell\}.$$
In section \ref{exceptional_groups}, we
will use this to prove certain uniqueness result on $m$-torsion
elements having the minimal dimensional centraliser among all
$m$-torsion elements, when $m|h$.

Let $C$ be the set of all $(s_0, \dots, s_\ell)\in \mathbb{R}^{\ell+1}$
such that
$$s_i\geq 0,\ 0\leq i\leq \ell$$
and
$$a_0s_0+a_1s_1+a_2s_2+\cdots+a_\ell s_\ell=m.$$
Let $\Omega$ be the  group $X^\ast(T)/\mathbb{Z}\Phi$. There is a
natural action of $\Omega$ on the (extended) Dynkin
diagram on the vertices $\{\alpha_0, \alpha_1, \alpha_2,\dots, \alpha_\ell\}$.
Note that $C$ has a natural action of $\Omega$ by permuting the vertices of the
(extended) Dynkin diagram.

We denote by
$x(s)$ the image of $\tilde{x}(s)$ in $\G/Z(\G)$. For any $s$ satisfying the conditions
\eqref{kac-co} the order of $x(s)$
 in $\G/Z(\G)$
is $m$,
and any
element of order $m$ in $\G/Z(\G)$ is conjugate to $x(s)$, for some $s\in C$.
Two elements $x(s_1)$ and $x(s_2)$ are conjugate in $\G/Z(\G)$ if and only if
$s_1$ and $s_2$ belong to an orbit  $\Omega$ in $C$.

The following theorem of Kostant follows as a consequence of the above results. 
\begin{theorem}
          Let $\G$ be a complex simple algebraic group of adjoint type.
          Let $h$ be the Coxeter number of $\G$. There exists a unique 
          conjugacy class consisting of regular elements of order $h$. 
          \end{theorem}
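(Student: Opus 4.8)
The plan is to read the statement directly off the Kac-coordinate description recalled above. A torsion element of $\G$ is semisimple, and a semisimple element is \emph{regular} precisely when the identity component of its centralizer is a maximal torus. If such an element is conjugate to $x(s)$ for Kac coordinates $s=(s_0,s_1,\dots,s_\ell)$ (with $\sum_{i=0}^\ell a_is_i=m$ and $\gcd(s_0,\dots,s_\ell)=1$), then the root system of the identity component of its centralizer has basis $\{\alpha_i:s_i=0,\ 0\le i\le\ell\}$, so the element is regular if and only if this set is empty, i.e. $s_i\ge 1$ for every $i\in\{0,1,\dots,\ell\}$. Since dimensions of centralizers are insensitive to isogeny, it is harmless to phrase everything in the adjoint group as in the statement.

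First I would prove existence. Take $m=h$ and $s=(1,1,\dots,1)$. Because $a_0=1$ and $a_0+a_1+\cdots+a_\ell=h$, the normalization $\sum_{i=0}^\ell a_is_i=h$ holds, and $\gcd(1,\dots,1)=1$, so $s$ is a legitimate Kac tuple for $m=h$; hence $x(s)$ is an element of the adjoint group of order exactly $h$. Every $s_i=1\ge 1$, so $x(s)$ is regular. (This element acts on every simple root by $e^{2\pi i/h}$, i.e. it is the Coxeter conjugacy class $C_h$.)

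Next, uniqueness. Let $x$ be any regular element of order $h$ in the adjoint group. By Kac's classification it is conjugate to $x(s)$ for some $s$ in $C$ with $\sum_{i=0}^\ell a_is_i=h$ and $\gcd(s_0,\dots,s_\ell)=1$. Regularity forces $s_i\ge 1$ for all $i$, so
\[
h=\sum_{i=0}^\ell a_is_i\ \ge\ \sum_{i=0}^\ell a_i\ =\ h,
\]
with equality if and only if $s_i=1$ for every $i$. Thus $s=(1,\dots,1)$ is forced; the $\Omega$-orbit of this (constant, hence $\Omega$-fixed) tuple is a single point, so all regular elements of order $h$ lie in the one conjugacy class produced in the existence step.

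The argument is a short deduction from the machinery already set up, so there is no serious obstacle. The one point that genuinely needs care is the translation between \emph{regular} (centralizer a torus) and the combinatorial condition ``$s_i>0$ for all $i$'': this relies on the facts, recalled in the review of Kac coordinates, that a torsion element is semisimple, that the identity component of its centralizer is generated by $\T$ together with the root subgroups on which the element acts trivially, and that for $x(s)$ the basis of the resulting root system is exactly $\{\alpha_i:s_i=0\}$ — together with the remark that passing to the adjoint group does not change centralizer dimensions.
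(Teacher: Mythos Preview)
Your proof is correct and follows essentially the same route as the paper: both deduce from Kac's classification that a regular order-$h$ element must have all Kac coordinates positive, whence $\sum a_is_i=h=\sum a_i$ forces $s=(1,\dots,1)$. You are in fact slightly more complete than the paper's version, since you explicitly supply the existence argument and note that the $\Omega$-orbit of the constant tuple is a singleton.
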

          \begin{proof}
            Let $(s_0, s_1,\dots, s_\ell)$ be a tuple of Kac-coordinates
            of a regular element of order $h$ in $\T$. Note
            that $$\sum_{i=0}^\ell a_is_i=h=\sum_{i=0}^\ell a_i.$$ Since
            $x(s)$ is regular, $s_i\neq 0$, for all $1\leq i\leq
            l$. Thus, we get that $s_i=1$, for all $1\leq i\leq
            l$. This implies that any two regular elements of order
            $h$ are conjugate in $\G$.
            \end{proof}

            \section{Classical groups}
            Recall that we are using $h$ to denote  the Coxeter number
            of a reductive group $\G$, and $C_h$ the Coxeter
            conjugacy class in $\G$, defined as the
            unique conjugacy class in $[\G,\G]$ which is regular and
            of minimal order ($h$, or $2h$), with order $h$ in the
            adjoint group of $\G$.

In the following sections we prove the following theorem.

\begin{thm}
For any classical group
$\G=\GL_n(\C), \Sp_{2n}(\C)$, $\SO_{2n+1}(\C), \SO_{2n}(\C)$, and for any $d| h$, the Coxeter number of $\G$,
there is a unique conjugacy class containing an element $x_d$ of order $d$ in the adjoint group
of minimal dimensional centralizer $Z_\G(x_d)$ (among elements $x_d$ of order $d$ in the adjoint group),
which is represented by $C_h^{h/d}$, except in the following cases:

\begin{enumerate}

\item $\G=\SO_{2n+2}$,  $d| (h=2n)$, an odd integer $\geq 3$. In this case there is exactly one more
  conjugacy class besides the one represented by  $C_h^{2n/d}$.

  \item $\G=\SO_{2n+2}$,  $d| (h=2n)$, an even integer with $2n/d$ also even. In this case there is exactly one more
  conjugacy class besides the one represented by  $C_h^{2n/d}$.
\end{enumerate}
The precise conjugacy classes and their centralizers are explicitly given in Lemma \ref{odd}, and Lemma \ref{even}.

\end{thm}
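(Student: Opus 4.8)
The plan is to treat the four families $\GL_n(\C)$, $\Sp_{2n}(\C)$, $\SO_{2n+1}(\C)$, $\SO_{2n}(\C)$ separately, and in each case to translate the question about minimal-dimensional centralizers of $d$-torsion elements in the adjoint group into an explicit extremal problem over sequences of eigenvalue multiplicities. By Proposition \ref{minimal} (applied in the dual group, which for classical groups is again a classical group in the same isogeny class), we already know $C_h^{h/d}$ achieves the minimum; what remains is the \emph{uniqueness} statement. First I would set up the bookkeeping: a semisimple element $x$ of order dividing $d$ in the adjoint group is, up to conjugacy, a diagonal element whose eigenvalues are powers of $\omega = e^{2\pi i/d}$ (with the appropriate symmetry constraint for types $B$, $C$, $D$, and modulo scalars for type $A$); record it by the multiplicity vector $(m_0, m_1, \dots, m_{d-1})$ where $m_j$ is the multiplicity of $\omega^j$. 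The dimension of the centralizer is then a concrete quadratic expression in the $m_j$: for $\GL_n$ it is $\sum_j m_j^2$ subject to $\sum_j m_j = n$; for the other types one gets analogous quadratic forms that also involve the pairing of $\omega^j$ with $\omega^{-j}$ coming from the bilinear form, plus correction terms for the fixed subspaces $V^{\pm 1}$.

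The core of the argument is then the extremal analysis: minimize $\sum_j m_j^2$ (or its type-$B/C/D$ analogue) over nonnegative integer vectors with prescribed sum. For $\GL_n$ the minimum of $\sum m_j^2$ with $\sum m_j = n$ is attained exactly when the $m_j$ are as equal as possible, i.e. all equal to $\lfloor n/d\rfloor$ or $\lceil n/d\rceil$; when $d\mid n$ (which is the relevant case since $d\mid h = n$) this forces $m_j = n/d$ for all $j$, giving a \emph{unique} multiplicity vector, and this vector is exactly the Kac-coordinate data of $C_h^{h/d}$ — so uniqueness is immediate here and no exceptions arise. For $\Sp_{2n}$ and $\SO_{2n+1}$ I would carry out the same type of computation but now the symmetry $m_j = m_{-j}$ (indices mod $d$) constrains the vectors, and the quadratic form to be minimized picks up the contribution of the (necessarily even- or odd-dimensional) form on the fixed spaces $V^{+1}$ and, for symplectic/even-orthogonal cases, $V^{-1}$; a short case check on the parity of $a = h/d$ and of $d$ shows the minimizer is again forced to be the balanced vector, hence unique. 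The delicate point is $\SO_{2n}$: here $V^{+1}$ and $V^{-1}$ are \emph{both} even-dimensional orthogonal spaces when $d$ is even, and when $d$ is odd one has only $V^{+1}$; distributing the "extra" dimension between the $\pm 1$ eigenspaces (or, for $d$ odd, choosing how the unique fixed orthogonal summand of odd dimension $a+2$ versus $a$ is placed relative to the balanced part) produces a genuine second multiplicity vector of the same centralizer dimension. I would isolate precisely these configurations, show they are the \emph{only} non-balanced minimizers, and verify their centralizers, thereby producing Lemma \ref{odd} (for $d$ odd dividing $h=2n$) and Lemma \ref{even} (for $d$ even with $h/d$ even).

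Concretely the steps are: (1) reduce each classical group to the multiplicity-vector model and write the centralizer dimension as an explicit quadratic functional; (2) identify the balanced vector with $C_h^{h/d}$ via Kac coordinates (using $\sum a_i = h$ and $d\mid h$, so that $s_i = h/d$ for $i=0,\dots,\ell$ after accounting for the $\Omega$-action); (3) run the convexity/parity argument to show balanced vectors are minimizers; (4) for types $A$, $B$, $C$ (with $d$ odd for $C$, matching Proposition \ref{classical}) show the minimizer is unique; (5) for type $D$ enumerate the boundary cases and extract the two exceptional families, then compute their centralizers directly to get $\GL_a^{(d-1)/2}\times\SO_{a+2}$ etc. matching the table in Section \ref{examples}. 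The main obstacle I anticipate is step (5): keeping track of exactly which reorderings of multiplicities among the $\pm1$ eigenspaces in $\SO_{2n}$ preserve the centralizer dimension without yet conjugating the element into $C_h^{h/d}$ — equivalently, distinguishing an honest new conjugacy class from a mere relabeling — since the disconnectedness of $\O_{2n}$ versus $\SO_{2n}$ and the outer automorphism swapping the two half-spin behaviors both enter, and one must be careful that the "second" class is not identified with the principal one under an element of $\Omega$ acting on the extended Dynkin diagram.
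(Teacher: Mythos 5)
Your overall strategy -- case-by-case over the four families, eigenvalue multiplicity vectors, centralizer dimension as a quadratic functional, convexity to force near-balanced vectors, then divisibility from $d\mid h$ to pin down the minimizer -- is essentially the paper's proof. But there is a concrete gap in your step (5), the type-$D$ enumeration, which is precisely where the exceptional classes live. You locate the second minimizer in "distributing the extra dimension between the $\pm 1$ eigenspaces." That is the wrong mechanism. For $d$ even with $d'=2n/d$ even, a split of the fixed spaces as $(\dim V^{+1},\dim V^{-1})=(d'+1,d'+1)$ is impossible inside $\SO_{2n+2}$ (the determinant forces the $-1$-eigenspace to have even dimension), and the splits $(d',d'+2)$ and $(d'+2,d')$ give conjugate elements, hence no new class. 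The genuine second class (Lemma \ref{even}) instead takes $(\dim V^{+1},\dim V^{-1})=(d',d')$ and transfers the two leftover dimensions into a single conjugate pair $W_\lambda\oplus W_{\lambda^{-1}}$ of isotropic eigenspaces, turning one $\GL_{d'}$ factor into $\GL_{d'+1}$; similarly for $d$ odd (Lemma \ref{odd}, where there is no $V^{-1}$ at all and the fixed space has even, not odd, dimension) the competitor shrinks $\SO_{d'+2}$ to $\SO_{d'}$ and enlarges one $\GL_{d'}$ to $\GL_{d'+1}$. An enumeration restricted to redistributions between $V^{+1}$ and $V^{-1}$ would miss both exceptional families and might flag a non-existent one.

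A second, smaller caution: the "short case check" you promise for $\Sp_{2n}$ and $\SO_{2n+1}$ understates where the work is. Because the fixed-space contribution ($\dim\Sp(W_1)$ or $\dim\SO(W_1)$) scales differently from $\dim\GL(W_\lambda)$, naive convexity does not force all multiplicities equal; it only confines each $\dim W_\lambda$ to a window of size about $2$ around $\dim W_1$ (this is the content of the paper's Lemmas \ref{Sp}, \ref{Bn}, \ref{son} on minimal maximal Levis, where two distinct maximal Levis of $\Sp_{6n+4}$ already tie for the minimum). Uniqueness is then rescued only by the congruence $\sum a_\lambda\equiv 0\pmod d$ coming from $d\mid h$, combined with the constraint $a_\lambda\in\{0,\pm1,2\}$ -- and it is exactly the survival of a nonzero solution to this congruence in type $D$ that produces the exceptions. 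If you carry out steps (3)--(5) with this window-plus-congruence argument rather than pure convexity, your plan does reproduce the paper's proof.
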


In what follows, we will deal with each classical group separately. What is involved is not so much that we are
dealing with the centralizer of elements $x_d$ in $\G$ which are of order $d$ in the adjoint group but that we are dealing with
the centralizer of elements $x_d$ in $\G$ which have at most $d$ eigenvalues in the standard representation of the classical
group (although $x_d$ itself may have order $2d$, but having order $d$ in the adjoint group implies this),
and which if the classical group is not $\GL_n(\C)$, appear in pairs $\{\lambda, \lambda^{-1}\}$.  Thus all that we will use about
$x_d$ is that it has almost $d$ eigenvalues which can be any of the $d$-th, or $(2d)$-th but not $d$-th roots of unity, and 
we are free to use these eigenvalues. Further, we will use the fact that a conjugacy class (say of finite order) in a classical group $\G$ is
uniquely specified by its eigenvalues, except that for $\SO_{2n}(\C)$, one must use conjugacy  not by $\SO_{2n}(\C)$ but by
$\O_{2n}(\C)$. This issue makes no difference  for $\SO_{2n}(\C)$ when dealing with those elements one of whose eigenvalues is $\{1,-1\}$, but may affect otherwise. However, as all the elements we will eventually deal with, with minimal dimensional
centralizer, will have one of the eigenvalues $\{1,-1\}$, so  we will not concern ourselves with this distinction
between conjugacy classes in $\SO_{2n}(\C)$ and in
$\O_{2n}(\C)$. These considerations allow us, for example, to replace
a Levi subgroup
$\G(W) \times \GL(X)$ corresponding to the decomposition
$V = W+X+X^\vee$, which arises as part of the centralizer of an element $x_d$ with $X$ and $X^\vee$ isotropic, and perpendicular to
$W$, by any other decomposition $V = W'+X'+X'^\vee$, changing $X$ to $X'$ to decrease the dimension of the corresponding Levi
$\G(W') \times \GL(X')$ without changing the eigenvalues of $x_d$ involved.

\section{$\GL_n(\C)$}

Let $\omega_d = e^{2\pi i/d}$, and let $C_d$ be the principal conjugacy class in $\GL_n(\C)$
represented by:
\[ C_d = \begin{pmatrix}
  1 & & & & \\
  & \omega_d &  & &  \\
    &  & \ddots & &  \\
  & &  & \omega_d^{n-2} & \\
  & & & & \omega_d^{n-1}
  \end{pmatrix}. \]

In particular, $C_n$ is the Coxeter conjugacy class in $\GL_n(\C)$
represented by:
\[ C_n = \begin{pmatrix}
  1 & & & & \\
  & \omega_n &  & &  \\
    &  & \ddots & &  \\
  & &  & \omega_n^{n-2} & \\
  & & & & \omega_n^{n-1}
\end{pmatrix}. \]

For $d|n$, we have $C_d = (C_n)^{n/d}$, which is represented by:
\[  \begin{pmatrix}
  1 & & & & & & \\
    &   \ddots & & & & & \\
  &  & 1 & & & &\\
    &  & &  \ddots & & & \\
  & &  & & \omega_d^{d-1}&  & \\
& &  & &  & \ddots & \\
  & & & & & & \omega_d^{d-1}
  \end{pmatrix}, \]
where each eigenvalue $\omega_d^i$ appears with multiplicity $n/d$. Therefore,
the centralizer:
\[ Z_{\GL_n(\C)}(C_d) = \GL_{n/d}(\C)^d \subset \GL_n(\C).\]

On the other hand if $x_d$ is any element of order $d$ in $\GL_n(\C)$, thus with eigenvalues contained in $\{1,\omega_d, \omega_d^2,\cdots, \omega_d^{d-1}\}$,
the centralizer:
\[ Z_{\GL_n(\C)}(x_d) = \prod_{i=0}^{d-1} \GL_{n_i}(\C) \subset \GL_n(\C), {\rm ~~with~~}
\sum_{i = 0}^{d-1}n_i = n,\]
where $n_i$ (possibly zero) is the multiplicity of the eigenvalue $\omega_d^i$.

It follows that
\[ \dim Z_{\GL_n(\C)}(x_d) = \sum_{i=0}^{d-1} n_i^2  {\rm ~~with~~}
\sum_{i = 0}^{d-1}n_i = n.\]

Note  that if $(a+1) < b$, 
\[ a^2+b^2 > (a+1)^2 +(b-1)^2.\]
This means that for \[ \dim Z_{\GL_n(\C)}(x_d)= \sum_{i=0}^{d-1} n_i^2  {\rm ~~with~~}
\sum_{i = 0}^{d-1}n_i = n,\] to have the minimal possible dimension among the centralizer of elements with
at most $d$ eigenvalues,
\[|n_i-n_j| \leq 1 ~~~~~~~ \forall i,j.\]

Thus, we can assume that $\{n_i\} = \{a,\cdots, a, a+1,\cdots, a+1\}$,
with $a$ appearing $(d-b)$ times and $(a+1)$ appearing $b$ times. Therefore,
\[ \sum_{i = 0}^{d-1} n_i = n = ad + b.\]
When $d|n $, we get that $d|b$, giving us either $b=0$, or $b=d$, thus all
the $n_i$ are the same and equal to $n/d$, and $x_d$ is conjugate to $C_d$.

We summarize the conclusions drawn here in the following proposition (in which we allow $n, d$ to be arbitrary).

\begin{prop}
For 
$\G=\GL_n(\C)$, let  $d \geq 1$ be any integer $\leq n$. Then if $n \equiv 0,\pm 1 \bmod d$, there is a unique conjugacy class in the adjoint group
$\PGL_n(\C)$
containing an element $x_d$ of order $d$ in $\PGL_n(\C)$
of minimal dimensional centralizer $Z_\G(x_d)$ (among elements $x_d$ of order $d$ in $\PGL_n(\C)$),
which is represented by $C_d$. For an arbitrary value of $d$, writing $n=da+b$ with $0\leq b < d$,
although the conjugacy class of $x_d$ is not unique
$\PGL_n(\C)$, its centralizer is unique up to conjugacy, and is the Levi subgroup:
\[ Z_\G(x_d) = \GL_a(\C)^{d-b} \times  \GL_{a+1}(\C)^b.\]
  \end{prop}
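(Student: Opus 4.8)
The plan is to translate the statement into the elementary extremal problem for eigenvalue multiplicities already isolated in the discussion above, keeping the ``centraliser is unique'' conclusion (valid for every $d$) carefully separated from the ``conjugacy class is unique'' conclusion (valid only when $n\equiv 0,\pm1\bmod d$). Concretely, I would first record the passage from $\PGL_n(\C)$ to $\GL_n(\C)$. If $\bar x\in\PGL_n(\C)$ has order $d$, choose a semisimple lift $\tilde x\in\GL_n(\C)$; then $\tilde x^d$ is scalar, so after rescaling the lift by a suitable $\mu\in\C^\times$ we may assume $\tilde x^d=I$, i.e.\ $\tilde x$ is diagonalisable with eigenvalues among the $d$-th roots of unity $1,\omega_d,\dots,\omega_d^{d-1}$, where $\omega_d=e^{2\pi i/d}$. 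Two such lifts give conjugate elements of $\PGL_n(\C)$ exactly when their eigenvalue multisets agree after multiplication by a common $d$-th root of unity, i.e.\ after a cyclic rotation of the exponents modulo $d$; and the order in $\PGL_n(\C)$ is exactly $d$ provided the occurring exponents do not all lie in one coset of a proper subgroup of $\Z/d\Z$, which holds in particular as soon as all $d$ of them occur. Letting $n_i$ be the multiplicity of $\omega_d^{i}$, one has $\sum_i n_i=n$ and
\[ Z_{\GL_n(\C)}(\tilde x)=\prod_{i=0}^{d-1}\GL_{n_i}(\C),\qquad \dim Z_{\GL_n(\C)}(\tilde x)=\sum_{i=0}^{d-1}n_i^2 . \]

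The core step is the minimisation of $\sum_i n_i^2$ subject to $n_i\ge0$ and $\sum_i n_i=n$. Using $a^2+b^2>(a+1)^2+(b-1)^2$ whenever $b>a+1$, any multiplicity vector with two entries differing by at least $2$ can be strictly improved without changing the sum; hence at a minimiser all the $n_i$ equal $a$ or $a+1$. Writing $n=da+b$ with $0\le b<d$ and using $d\le n$ so that $a\ge1$, this forces the multiset $\{n_i\}$ to be $\{a^{(d-b)},(a+1)^{(b)}\}$. In particular every $\omega_d^{i}$ occurs with positive multiplicity, so such an element automatically has order exactly $d$ in $\PGL_n(\C)$, and, regardless of which exponents carry the larger multiplicity, its $\GL_n(\C)$-centraliser is $\GL_a(\C)^{d-b}\times\GL_{a+1}(\C)^{b}$ up to conjugacy. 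This already establishes the second assertion of the proposition.

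For the first assertion I would track the conjugacy class itself. By the dictionary above it is determined by the subset $S\subseteq\Z/d\Z$ of exponents carrying multiplicity $a+1$ (so $|S|=b$), and only up to translation in $\Z/d\Z$. If $b=0$ then $S=\emptyset$, every $n_i=n/d$, and $\tilde x$ is $\GL_n(\C)$-conjugate to $C_d=\diag(1,\omega_d,\dots,\omega_d^{n-1})$: a single class. If $b=1$ (resp.\ $b=d-1$) then $S$ (resp.\ its complement) is a single point, and $\Z/d\Z$ permutes its points transitively, so all admissible $S$ are translates of one another: again a single class; since $C_d$ itself realises the balanced pattern with $|S|=b=n\bmod d$, that class is precisely the class of $C_d$. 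When instead $2\le b\le d-2$ there are $b$-element subsets lying in distinct translation orbits, so the conjugacy class is genuinely non-unique, while — as shown above — the centraliser is not; this matches the phrasing of the proposition.

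The only place I expect real care to be needed is the conjugacy bookkeeping modulo scalars: distinguishing ``same centraliser'' from ``same $\PGL_n(\C)$-class'', and verifying precisely that the minimiser of $\sum_i n_i^2$ is unique as a cyclic word in the $d$-th roots of unity exactly for $b\in\{0,1,d-1\}$, equivalently $n\equiv 0,\pm1\bmod d$. Everything else reduces to the convexity inequality already recorded above together with the standard classification of semisimple conjugacy classes in $\GL_n(\C)$, and of their images in $\PGL_n(\C)$, by eigenvalue multisets.
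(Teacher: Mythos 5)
Your proof is correct and follows essentially the same route as the paper: reduce to minimising $\sum_i n_i^2$ over eigenvalue multiplicities via the convexity inequality $a^2+b^2>(a+1)^2+(b-1)^2$, conclude the multiplicities are $\{a^{(d-b)},(a+1)^{(b)}\}$, and then observe that for $b\in\{0,1,d-1\}$ the distinguished exponents form a single orbit under the scalar (translation) action, so the $\PGL_n(\C)$-class is unique. Your treatment of the $\GL_n$-versus-$\PGL_n$ bookkeeping (normalised lifts differing by a $d$-th root of unity, hence cyclic translation of exponents) is a slightly more explicit version of the paper's remark that ``which of the eigenvalues is singled out does not matter,'' but it is the same argument.
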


\begin{proof}
  If $n \equiv b \bmod d$, then the elements of order $d$ with minimal dimensional centraliser
  have eigenvalues $\{\omega_d^i, i =0,\cdots, n-1\}$, of which $(d-b)$ are repeated  $a$ times, and
  $b$ of them are repeated   $a+1$ times. Thus when $b = 1$, or $(d-1)$, exactly one of the eigenvalues
  occurs with different multiplicity than the rest. As we are in $\PGL_n(\C)$, which of the  eigenvalues $\{\omega_d^i, i =0,\cdots n-1\}$ is singled out, does not matter.
  \end{proof}
\section{$\Sp_{2n}(\C)$}
Let $C_{2n}$ be the Coxeter conjugacy class in $\Sp_{2n}(\C)$
(corresponding to the symplectic form $X_1\wedge X_{2n} + X_2\wedge X_{2n-1}+\cdots$) 
of order $4n$ and of order $2n$ in the adjoint group, given by:

\[  C_{2n} = \begin{pmatrix}
  \omega_{4n}^{2n-1} & & && & && \\
    &   \ddots & & & & & &\\
  &  & \omega_{4n}^3 & & & & &\\
    &  & &  \omega_{4n} & & & &\\
  & &  & & \omega_{4n}^{-1} & & &\\
& &  & & & \omega_{4n}^{-3} & &\\
  & &  & & & & \ddots & \\
&  & & & & & & \omega_{4n}^{-2n+1}
\end{pmatrix}. \]

Let $C_{2n}^a=C_{2b}$ be an element of even order $2b$ in the adjoint group
for $2ab=2n$, thus $C_{2b}$ is given by the matrix:
\[ C_{2b}=  \begin{pmatrix}
  \omega_{4b}^{2n-1} & & && & && \\
    &   \ddots & & & & & &\\
  &  & \omega_{4b}^3 & & & & &\\
    &  & &  \omega_{4b} & & & &\\
  & &  & & \omega_{4b}^{-1} & & &\\
& &  & & & \omega_{4b}^{-3} & &\\
  & &  & & & & \ddots & \\
&  & & & & & & \omega_{4b}^{-2n+1}
\end{pmatrix}. \]

Then 
\[ Z_{\Sp_{2n}(\C)}(C_{2b}) = \GL_{a}(\C)^b \subset \Sp_{2n}(\C) = \Sp_{2ab}(\C).\]

If instead of $C_{2b}$, we take any element $x_{2b}$ of order $4b$
in $\Sp_{2n}(\C)$ which is of order $2b$ in the adjoint group, then $x_{2b}^{2b}=-1$, thus the eigenvalues of $x_{2b}$ must be $(4b)$-th root of unity, none of which are $(2b)$-th root of unity.
In particular $1,-1$ are not possible eigenvalues of $x_{2b}$,
and the number of eigenvalues of $x_{2b}$ are at most $2b$ many, occurring in pairs $\{\lambda, \lambda^{-1}\}$.
By replacing $\lambda$ by $\lambda^{-1}$, assume that $\lambda^b = i$, for pairs of eigenvalues  $\{\lambda, \lambda^{-1}\}$ of $x_{2b}$.

Clearly, the centralizer of $x_{2b}$ in $\Sp_{2n}(\C)$ is 
$\prod \GL(W_\lambda)$,
where $W_{\lambda}$ is the $\lambda$ eigenspace of $x_{2b}$, and the product over  $\lambda$ is taken over only one of $\lambda$ or $\lambda^{-1}$.

Thus, \[Z_{\Sp_{2n}(\C)} (x_{2b})  =
\prod \GL(W_\lambda), ~~{\rm with} ~~ \sum \dim(W_{\lambda}) = n.\]

Since we are in the case of $2ab=2n$, $b|n$ with $x_{2b}^b=i$, giving us an element of order $b$ in the adjoint group of $\GL_n(\C)$ with $b|n$, By the analysis done for $\GL_n(\C)$, for $Z_{\Sp_{2n}(\C)} (x_{2b})$ to have minimal possible dimension, $\dim(W_\lambda)$ is independent of $\lambda$, and $x_{2b}$ is a conjugate of $C_{2b}$, proving the assertion about $\Sp_{2n}(\C)$ for those elements of even order in the adjoint group.

Next we assume that $x$ has odd order $=(2b+1)$ in the adjoint group of $\Sp_{2n}(\C)$ with minimal possible dimension of its centralizer (among centralizer of elements of $\Sp_{2n}(\C)$ of order $=(2b+1)$ in the adjoint group). In this case,we can assume that  $x$ itself has odd order in $\Sp_{2n}(\C)$. Therefore, eigenvalue 1 is allowed, but not the eigenvalue $-1$ for $x$. In this case, we have the decomposition of the underlying symplectic space $W$ (of dimension $2n$) as:
\[ W = \sum_{ \lambda^2\not = 1}
(W_\lambda +W_{\lambda^{-1}}) \oplus W_1.\]
  The centralizer of $x$  in $\Sp(W)= \Sp_{2n}(\C)$ is:
  \[\prod \GL(W_\lambda)
  \times \Sp(W_1).\]

  We next need some lemmas. In the lemmas below, for a classical group $\G(W)$ defined using a quadratic or a symplectic space $W$, we will use  Levi subgroups of $\G(W)$ of the form 
$\G(W_1) \times \GL(X)$ preserving  the decomposition
  $W = X+W_1+ X^\vee$,
    where $X$ and $X^\vee$ are isotropic subspaces of $W$, and perpendicular to
    $W_1$, a non-degenerate subspace of $W$. In the case of the symplectic group $\Sp(W)$,
    these are particular cases of the 
subgroup $\prod \GL(W_\lambda)
\times \Sp(W_1)$ of $\Sp(W)$ just encountered for the  decomposition of $ W = \sum_{ \lambda^2\not = 1}
(W_\lambda +W_{\lambda^{-1}}) \oplus W_1.$

  \begin{lemma}\label{Sp}
    Among the maximal Levi subgroups of a symplectic group, the minimal dimension is achieved in exactly the following cases:

    \begin{enumerate}
    \item $\GL_{2n}(\C) \times \Sp_{2n}(\C) \subset \Sp_{6n}(\C)$.

    \item $\GL_{2n \pm 1}(\C) \times \Sp_{2n}(\C) \subset \Sp_{6n\pm 2}(\C)$.

    \item $\GL_{2n+2}(\C) \times \Sp_{2n}(\C) \subset \Sp_{6n+4}(\C)$.

      \end{enumerate}
In particular, there are two maximal Levi subgroups
      $\GL_{2n+2}(\C) \times \Sp_{2n}(\C)$ and $\GL_{2n+1}(\C) \times \Sp_{2n+2}(\C)$ inside
      $\Sp_{6n+4}(\C)$
      of (the same)
      minimal dimension.
  \end{lemma}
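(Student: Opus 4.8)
The plan is to reduce the lemma to an elementary one‑variable optimization. The maximal Levi subgroups of $\Sp_{2N}(\C)$ are, up to conjugacy, the subgroups obtained by deleting one node of the Dynkin diagram of type $C_N$, that is
\[ L_k \;=\; \GL_k(\C)\times\Sp_{2(N-k)}(\C), \qquad 1\le k\le N, \]
with the convention that $\Sp_0(\C)$ is trivial (so $L_N=\GL_N(\C)$ is the Siegel Levi and $L_1=\GL_1(\C)\times\Sp_{2N-2}(\C)$). Using $\dim\GL_k(\C)=k^2$ and $\dim\Sp_{2m}(\C)=m(2m+1)$, with $m=N-k$ one obtains
\[ d_N(k):=\dim L_k = k^2+(N-k)\bigl(2(N-k)+1\bigr) = 3k^2-(4N+1)k+2N^2+N. \]

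The key observation is that the forward difference $\delta_N(k):=d_N(k+1)-d_N(k)=6k+2-4N$ is strictly increasing in $k$, negative exactly for $k<(2N-1)/3$ and positive exactly for $k>(2N-1)/3$. Hence $d_N$ is strictly decreasing and then strictly increasing on the integers, and its minimum over $\{1,\dots,N\}$ is attained at $\lceil(2N-1)/3\rceil$, and at the two consecutive integers $(2N-1)/3$ and $(2N-1)/3+1$ precisely when $(2N-1)/3\in\Z$, i.e. when $N\equiv 2\pmod 3$. A short check using $\lceil(2N-1)/3\rceil\ge 1$ and, for $N\ge 2$, $\lceil(2N-1)/3\rceil\le N-1$ shows that this minimizer (and in the tied case its successor) always lies in the admissible range $\{1,\dots,N\}$, so no boundary Levi interferes — in particular the Siegel Levi $L_N$ is never of minimal dimension for $N\ge 2$ — while the two small cases $N=1$ ($\Sp_2$, with $L_1=\GL_1(\C)$ the only maximal Levi) and $N=2$ ($\Sp_4$, with $\dim L_1=\dim L_2=4$) are checked directly and fit the pattern below.

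It remains to read off the answer by the residue of $N$ modulo $3$. Writing $N=3n$, $N=3n+1$, $N=3n+2$, the minimizing $k$ is $2n$, $2n+1$, and $\{2n+1,2n+2\}$ respectively, and substituting $N-k$ back gives the Levi subgroups
\[ \GL_{2n}(\C)\times\Sp_{2n}(\C)\subset\Sp_{6n}(\C),\qquad \GL_{2n+1}(\C)\times\Sp_{2n}(\C)\subset\Sp_{6n+2}(\C), \]
and the pair $\GL_{2n+1}(\C)\times\Sp_{2n+2}(\C)$, $\GL_{2n+2}(\C)\times\Sp_{2n}(\C)$ inside $\Sp_{6n+4}(\C)$, their equality of dimensions being the identity $\delta_N(2n+1)=0$ for $N=3n+2$. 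These are exactly items (1), (2) with the sign $+$, and (3) together with the last sentence of the lemma; item (2) with the sign $-$, namely $\GL_{2n-1}(\C)\times\Sp_{2n}(\C)\subset\Sp_{6n-2}(\C)$, is nothing new, being the branch $k=2(n-1)+1$ of the tie for $N=3(n-1)+2$, and is listed only for symmetry. I do not anticipate a genuine obstacle here; the only care needed is the boundary bookkeeping in the previous paragraph and the matching of the (deliberately redundant) packaging of cases (1)--(3) and the $\pm$ with the three residue classes of $N$ modulo $3$.
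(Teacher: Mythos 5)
Your proof is correct. The paper actually states Lemma \ref{Sp} (like its companions, Lemmas \ref{Bn} and \ref{Dn}) without any proof, so your argument fills a gap rather than paralleling an existing one; the reduction to minimizing the quadratic $d_N(k)=3k^2-(4N+1)k+2N^2+N$ over $k\in\{1,\dots,N\}$ via the forward difference $6k+2-4N$ is the natural route, and your case split on $N\bmod 3$ matches the lemma's three items exactly, including the tie (and hence the two distinct minimal Levis) precisely when $N\equiv 2\pmod 3$. The only blemish is your parenthetical claim that the Siegel Levi $L_N$ is never minimal for $N\ge 2$: for $N=2$ the tied successor is $k=2=N$, so $L_2=\GL_2(\C)$ \emph{is} one of the two minimizers of $\Sp_4(\C)$ (this is the $n=0$ instance of item (3)); since you check $N=2$ directly anyway, this does not affect the conclusion, but the sentence should be restricted to $N\ge 3$.
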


  For use in later sections, we record the following analogous lemma for odd
  orthogonal groups.

  \begin{lemma}\label{Bn}
    Among the maximal Levi subgroups of odd orthogonal groups, the minimal dimension is achieved in exactly the following cases:

    \begin{enumerate}
    \item $\GL_{2n}(\C) \times \SO_{2n+1}(\C) \subset \SO_{6n+1}(\C)$.

    \item $\GL_{2n \pm 1}(\C) \times \SO_{2n+1}(\C) \subset \SO_{6n+ 1  \pm 2}(\C)$.

    \item $\GL_{2n+2}(\C) \times \SO_{2n+1}(\C) \subset \SO_{6n+5}(\C)$.

      \end{enumerate}
In particular, there are two maximal Levi subgroups
      $\GL_{2n+2}(\C) \times \SO_{2n+1}(\C)$ and $\GL_{2n+1}(\C) \times \SO_{2n+3}(\C)$ inside
      $\SO_{6n+5}(\C)$
      of (the same)
      minimal dimension.
  \end{lemma}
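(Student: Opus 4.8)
The plan is to reduce Lemma~\ref{Bn} to minimizing one explicit quadratic function over the integers. Up to conjugacy, the maximal Levi subgroups of $\SO_{2r+1}(\C)$ are the subgroups $L_k=\GL_k(\C)\times\SO_{2(r-k)+1}(\C)$ for $1\le k\le r$, one for each node of the type $B_r$ Dynkin diagram (with $\SO_1(\C)$ read as the trivial group). So for a fixed value of $r$ the Lemma asks which $k$ minimize
\[ f(k):=\dim L_k=k^2+(r-k)\bigl(2(r-k)+1\bigr), \]
after which only a translation of the answer into the ``$\pm$'' shorthand of the statement is needed.

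First I would record the first difference $f(k+1)-f(k)=6k-4r+2$; this is the only computation in the argument and is immediate. It is strictly increasing in $k$, so $f$ is strictly convex on $\Z$, and hence its set of minimizers is the set of integers $k$ with $f(k)\le f(k-1)$ and $f(k)\le f(k+1)$, i.e.\ with
\[ \frac{2r-1}{3}\le k\le\frac{2r+2}{3}. \]
This closed interval has length $1$, so it contains exactly one integer unless its endpoints are themselves integers, which happens precisely when $r\equiv 2\pmod{3}$, in which case it contains exactly two.

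It then remains to run through the residue classes $r\equiv 0,1,2\pmod{3}$ and translate. For $r=3n$ the unique minimizer is $k=2n$, giving $\GL_{2n}(\C)\times\SO_{2n+1}(\C)\subset\SO_{6n+1}(\C)$, which is case~(1). For $r=3n+1$ the unique minimizer is $k=2n+1$, giving $\GL_{2n+1}(\C)\times\SO_{2n+1}(\C)\subset\SO_{6n+3}(\C)$, the ``$+$'' instance of case~(2). For $r=3n+2$ there are the two minimizers $k=2n+1$ and $k=2n+2$, giving $\GL_{2n+1}(\C)\times\SO_{2n+3}(\C)$ and $\GL_{2n+2}(\C)\times\SO_{2n+1}(\C)$, both inside $\SO_{6n+5}(\C)$; this is case~(3) together with the last sentence of the Lemma, and re-indexing $n\mapsto n-1$ in this same class also produces $\GL_{2n-1}(\C)\times\SO_{2n+1}(\C)\subset\SO_{6n-1}(\C)$, the ``$-$'' instance of case~(2). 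I do not expect a genuine obstacle: the entire content is the single convexity estimate $f(k+1)-f(k)=6k-4r+2$ together with the mod-$3$ bookkeeping, and the proof of Lemma~\ref{Sp} is literally the same argument with $\SO_{2j+1}(\C)$ replaced by $\Sp_{2j}(\C)$, since those two groups have equal dimension. The only point that needs a separate word is the smallest few values of $r$, where one of the $\GL$ or $\SO$ factors degenerates; these are handled by direct inspection.
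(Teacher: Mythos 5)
Your proposal is correct, and in fact the paper records Lemma \ref{Bn} (like Lemmas \ref{Sp} and \ref{Dn}) without any proof, so your argument supplies exactly the verification that is implicitly being invoked. The key computation checks out: with $f(k)=k^2+(r-k)(2(r-k)+1)$ one gets $f(k+1)-f(k)=6k-4r+2$, strict convexity forces the minimizers to be the integers in $\left[\tfrac{2r-1}{3},\tfrac{2r+2}{3}\right]$, and the three residue classes of $r$ modulo $3$ reproduce cases (1)--(3) together with the tie $\GL_{2n+2}(\C)\times\SO_{2n+1}(\C)$ versus $\GL_{2n+1}(\C)\times\SO_{2n+3}(\C)$ inside $\SO_{6n+5}(\C)$ precisely when $r\equiv 2\pmod 3$.
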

  Here is the analogous lemma for even
  orthogonal groups.
  
   \begin{lemma} \label{Dn}
     Among the maximal Levi subgroups of an even orthogonal group,
     the minimal dimension is achieved in exactly the following cases:

    \begin{enumerate}
    \item $\GL_{2n}(\C) \times \SO_{2n}(\C) \subset \SO_{6n}(\C)$.

    \item $\GL_{2n \pm 1}(\C) \times \SO_{2n}(\C) \subset \SO_{6n\pm 2}(\C)$.

    \item $\GL_{2n-2}(\C) \times \SO_{2n}(\C) \subset \SO_{6n-4}(\C)$.
         \end{enumerate}
   In particular, there are two maximal Levi subgroups
      $\GL_{2n+1}(\C) \times \SO_{2n}(\C)$ and $\GL_{2n}(\C) \times \SO_{2n+2}(\C)$ inside
      $\SO_{6n+2}(\C)$
      of (the same)
      minimal dimension. 
   \end{lemma}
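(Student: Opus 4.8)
The plan is to reduce Lemma~\ref{Dn} to an elementary minimization of a convex quadratic over a lattice of integers of fixed parity, exactly parallel to (the intended proofs of) Lemmas~\ref{Sp} and \ref{Bn}. First I would record the list of maximal Levi subgroups of the even orthogonal group $\SO_{2m}(\C)$: up to conjugacy these are
\[ L_k \;=\; \GL_k(\C)\times \SO_{2m-2k}(\C),\qquad 1\le k\le m-2, \]
together with the Siegel Levi $\GL_m(\C)$ (which occurs in two $\SO_{2m}$-conjugacy classes, but these are isomorphic and equidimensional, so play no role here; note also that $\SO_{2m}$ has \emph{no} maximal Levi of the form $\GL_{m-1}(\C)\times\SO_2(\C)$). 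Using $\dim\SO_j(\C)=\binom{j}{2}$ for all $j\ge 0$, one has $\dim L_k=k^2+\binom{2m-2k}{2}$, and $\dim\GL_m(\C)=m^2=k^2+\binom{2m-2k}{2}\big|_{k=m}$, so the single function $D(k):=k^2+\binom{2m-2k}{2}$ records all the relevant dimensions; we must minimize it over $k\in\{1,\dots,m-2\}\cup\{m\}$.

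Next I would change variables: writing $N=2m$ and $t:=N-2k$ (an even integer, $t=0$ or $4\le t\le N-2$), one gets $D(k)=g(t)$ with
\[ g(t)\;=\;\frac{(N-t)^2}{4}+\frac{t(t-1)}{2}\;=\;\frac{3t^2-(2N+2)t+N^2}{4}, \]
a strictly convex quadratic in the real variable $t$ with vertex at $t^\ast=(N+1)/3$. One checks $g(4)<g(0)$ for $N\ge 6$, so the Siegel Levi is never minimal, and $t^\ast\ge 4$ once $N\ge 12$, so the single excluded even value $t=2$ does not lie among the even integers nearest $t^\ast$. Hence, for $N$ large enough (the finitely many small cases checked directly), the minimum of $\dim L_k$ is attained precisely at the even integer(s) closest to $t^\ast=(N+1)/3$, with a genuine tie — two minimizing maximal Levis — exactly when $t^\ast$ is the midpoint of two consecutive even integers, i.e. when $t^\ast$ is an odd integer, i.e. when $N\equiv 2\pmod 6$.

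The proof then finishes by a short case analysis on $N\bmod 6$ (recall $N$ is even). If $N=6n$, the unique nearest even integer to $t^\ast=2n+\tfrac13$ is $t=2n$, giving the unique minimizer $\GL_{2n}(\C)\times\SO_{2n}(\C)\subset\SO_{6n}(\C)$: case (1). If $N=6n+4$, the unique nearest even integer to $t^\ast=2n+\tfrac53$ is $t=2n+2$, giving the unique minimizer $\GL_{2n+1}(\C)\times\SO_{2n+2}(\C)\subset\SO_{6n+4}(\C)$; rewriting $6n+4=6(n+1)-2$ this is case (2) with the $-$ sign, index $n+1$. If $N=6n+2$, then $t^\ast=2n+1$ is odd, so $t=2n$ and $t=2n+2$ both minimize, producing the two Levis $\GL_{2n+1}(\C)\times\SO_{2n}(\C)$ and $\GL_{2n}(\C)\times\SO_{2n+2}(\C)$ inside $\SO_{6n+2}(\C)$ of equal minimal dimension — this is case (2) with the $+$ sign together with the ``in particular'' assertion, and the second of these, rewritten via $6n+2=6(n+1)-4$ as $\GL_{2(n+1)-2}(\C)\times\SO_{2(n+1)}(\C)\subset\SO_{6(n+1)-4}(\C)$, is case (3) with index $n+1$. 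Thus every even $N$ is accounted for, and in each residue class the minimizers are exactly those listed.

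I do not expect a genuine mathematical obstacle: the heart is the convex-quadratic lattice optimization, identical in spirit to Lemmas~\ref{Sp} and \ref{Bn} with only the dimension of the classical factor (hence the position of $t^\ast$) changed. The points that need care are (i) the maximal-Levi classification for $\SO_{2m}$, specifically the absence of a $\GL_{m-1}\times\SO_2$ Levi and the two conjugacy classes of $\GL_m$ Levi (both harmless for the dimension count once $N$ is not tiny); (ii) the parity constraint $t\equiv N\pmod 2$, which is precisely what forces the tie in the case $N\equiv 2\pmod 6$ and produces the two equidimensional Levis in the ``in particular'' clause; and (iii) matching the four differently-parametrized items of the statement — in particular recognizing case (3) as a reindexing of one of the two case-(2) minimizers — against the clean $\bmod\,6$ analysis.
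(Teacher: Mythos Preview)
The paper states Lemma~\ref{Dn} (like the companion Lemmas~\ref{Sp} and~\ref{Bn}) without proof, so there is no argument in the paper to compare yours against. Your approach is correct and is the natural one: after recording the maximal Levi subgroups $L_k=\GL_k\times\SO_{2m-2k}$ for $1\le k\le m-2$ together with the Siegel Levi $\GL_m$, the problem becomes minimizing the strictly convex quadratic $g(t)=(3t^2-(2N+2)t+N^2)/4$ over the admissible even values of $t=N-2k$, and your $\bmod\,6$ case analysis correctly identifies the minimizers and the tie at $N\equiv 2\pmod 6$. Your matching of the outcome to the four overlapping parametrizations (1)--(3) of the statement is also accurate.

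The only caveats---which you already flag---are the low-rank edge cases: for small $N$ the excluded value $t=2$ (reflecting the absence of a maximal Levi $\GL_{m-1}\times\SO_2$ in $D_m$) lies near the vertex $t^\ast=(N+1)/3$, and indeed for $\SO_6$ the Levi $\GL_2\times\SO_2$ named by case~(1) with $n=1$ is not a maximal Levi at all (the actual minimum there is $\GL_1\times\SO_4$). This is a boundary artifact of the lemma's statement rather than of your method; for the ranges in which the paper actually invokes the lemma your argument applies cleanly.
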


  Next observe that if 
\[Z(x) = \prod_{\lambda \sim \lambda^{-1}} \GL(W_\lambda)
  \times \Sp(W_1),\]
  has minimal possible dimension, then  $\prod \GL(W_\lambda)$ has minimal possible dimension among all Levi subgroups of $\GL(\sum W_\lambda)$ with as many factors, which implies by an earlier observation made during the analysis of $\GL_n(\C)$ that
  \[ |\dim W_\lambda - \dim W_\mu| \leq 1 {\rm ~~for~~ all~~} \lambda, \mu.\]
  Further, $\Sp(W_1) \times \GL(W_\lambda)$
  too must be of minimal dimension among maximal Levi subgroups of the symplectic group  $\Sp(W_1 + W_\lambda + W_{\lambda^{-1}} )$,
  for each $\lambda$. Therefore by  Lemma \ref{Sp},
 if $ \dim W_1 = 2m$, then  $ \dim W_\lambda = \dim W_{\lambda^{-1}} = 2m+a_\lambda$ with $a_\lambda = 0, \pm 1, 2$.

 Since $\sum (2m +a_i) = 2n$, it follows that,
 \[ 2m(2b+1) + \sum a_i = 2n.\]
 As the order of $x$ is $(2b+1)$ which divides the Coxeter number $2n$, we get that \[(2b+1)| \sum a_i.\]

Observe that the index $i$ of $a_i$ run over all the eigenvalues of $x$ which are $(2b+1)$ many.  As $a_i= 0, \pm 1, 2$, the options for $\sum a_i$ are  $\sum a_i= -(2b+1),0, (2b+1), 2(2b+1)$. Since $a_1=0$,  $\sum a_i=  -(2b+1), 2(2b+1)$ are not options,
 leaving  $\sum a_i= 0, (2b+1)$, as the only options. Given the further restriction that $|\dim W_\lambda - \dim W_\mu| \leq 1 {\rm ~~for~~ all~~} \lambda, \mu$, if some $a_i<0$, then no $a_i>0$, and $\sum a_i$ cannot be $(b+1)$. Assume next
 that all $a_i \geq 0$. If there is at least one $a_i=2$, then there are at least two corresponding to
 $\{\lambda, \lambda^{-1}\}$, and  none can be zero, so the only options are $a_i=1,2$ which together with   $\sum a_i= (2b+1),$ leads to a contradiction, leaving
 $\sum a_i = 0$ as the only option. In this case, if any $a_i<0$, then no $a_i>0$, and $\sum a_i =0$ is not an option, leaving us with the only option of $a_i=0$ for all $i$, i.e.,
 \[ \dim W_\lambda = \dim W_1,\]
 for all eigenvalues $\lambda$ of $x$ which means that the eigenvalues with multiplicity of $x$ is the same as that of $C_d$, and therefore they are conjugate
 in $\Sp_{2n}(\C)$.
 
We summarize the conclusions drawn above in the following proposition.

\begin{prop}
For 
$\G=\Sp_{2n}(\C)$, and for any $d|(2n)$, there is a unique conjugacy class in the adjoint group of
$\Sp_{2n}(\C)$
containing an element $x_d$ of order $d$ in the adjoint group
of minimal dimensional centralizer $Z_\G(x_d)$ (among elements $x_d$ of order $d$ in the adjoint group),
which is represented by $C_d$. The  centralizer $Z_\G(C_d)$ is always a Levi subgroup, and is

\begin{eqnarray*} Z_\G(x_d) & = & \GL_a(\C)^{d/2} \hspace{3cm} {\rm if}\hspace{1cm} 2n = da {\rm ~with~} d {\rm ~even~},\\
 Z_\G(x_d) &=& \GL_{2a}(\C)^{(d-1)/2} \times \Sp_{2a}(\C) \hspace{5mm} {\rm if}\hspace{1cm} 2n = 2da {\rm ~with~} d {\rm ~odd~}.\end{eqnarray*}
\end{prop}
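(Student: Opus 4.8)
The plan is to deduce the proposition from a direct eigenvalue computation for $C_d$, combined with the convexity estimate already used in the $\GL_n(\C)$ case and with Lemma \ref{Sp}. Since the principal $\SL_2(\C)$ enters $\Sp_{2n}(\C)$ through $\Sym^{2n-1}(\C^2)$, the eigenvalues of $C_d=\psi(e^{\pi i/d})$ on $\C^{2n}$ are the numbers $(e^{\pi i/d})^{2n-1-2j}$, $0\le j\le 2n-1$; as $(e^{\pi i/d})^{2d}=1$ this list is periodic of period $d$ in $j$, so it consists of $d$ distinct $2d$-th roots of unity, each occurring with multiplicity $2n/d$. None of these equals $1$, and exactly one equals $-1$ when $d$ is odd (indeed $\psi(-1)=2\rho^\vee(-1)=-I$, so $C_d^d=-I$, and $(-1)^d=-1$ iff $d$ is odd). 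Pairing the eigenvalues as $\{\lambda,\lambda^{-1}\}$ as forced by $C_d\in\Sp_{2n}(\C)$, I would read off that for $d$ even (so $2n=da$) the $d$ eigenvalues form $d/2$ dual pairs of multiplicity $a$, giving $Z_\G(C_d)=\GL_a(\C)^{d/2}$, while for $d$ odd (so $2n=2da$) the eigenvalue $-1$ has a nondegenerate eigenspace of dimension $2a$ and the remaining $d-1$ eigenvalues form $(d-1)/2$ dual pairs of multiplicity $2a$, giving $Z_\G(C_d)=\GL_{2a}(\C)^{(d-1)/2}\times\Sp_{2a}(\C)$. In particular $Z_\G(C_d)$ is a Levi subgroup.

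For uniqueness, let $x$ be any element of order $d$ in the adjoint group, so $x^d=\pm I$; replacing $x$ by $-x$ changes neither its image in the adjoint group nor its centralizer, so I may normalize the sign. Decomposing $\C^{2n}$ by eigenvalues, $Z_\G(x)$ is a product of symplectic groups $\Sp(W_{\pm1})$ (for whichever self-dual eigenvalues occur) and general linear groups $\GL(W_\lambda)$, indexed by at most $\lfloor d/2\rfloor$ dual pairs, with $\dim W_1+\dim W_{-1}+2\sum_\lambda\dim W_\lambda=2n$. If $Z_\G(x)$ has minimal dimension, then the $\GL_n(\C)$ estimate gives $|\dim W_\lambda-\dim W_\mu|\le1$ for all $\lambda,\mu$, and Lemma \ref{Sp}, applied to each $\Sp(W_\varepsilon+W_\lambda+W_{\lambda^{-1}})$, forces the defect $\dim W_\lambda-\dim W_\varepsilon$ into $\{0,\pm1,2\}$, where $W_\varepsilon$ is the relevant self-dual eigenspace. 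Summing these relations over all eigenvalues and using $d\mid h=2n$, the sum of the defects is divisible by $d$; combining this with the size bound $\{0,\pm1,2\}$, the constraint $|\dim W_\lambda-\dim W_\mu|\le1$, and the fact that the forced self-dual eigenvalue contributes defect $0$, I would conclude that every defect vanishes, so $x$ has the same eigenvalue multiplicities as $C_d$ and is conjugate to it. The two displayed formulas then follow: for $x^d=-I$ no $\pm1$ eigenvalue is available, the count is the pure $\GL_n(\C)$ one, and it yields $\GL_a(\C)^{d/2}$ (this is the $d$ even situation); after the sign normalization the remaining case has a single symplectic factor and yields $\GL_{2a}(\C)^{(d-1)/2}\times\Sp_{2a}(\C)$ (the $d$ odd situation); and the genuinely distinct ``$x^d=I$, $d$ even'' configurations, having strictly fewer general linear factors together with one or two comparatively large symplectic factors, are seen directly to have strictly larger centralizer than $\GL_a(\C)^{d/2}$.

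The hard part will be the tightness of the minimization for the mixed products $\Sp(W_\varepsilon)\times\prod_\lambda\GL(W_\lambda)$: one must show that ``all defects zero'' is the \emph{unique} minimizer of $\dim Z_\G(x)$, not merely a critical point of a naive $\sum(\dim W_\lambda)^2$-type estimate. This is exactly where Lemma \ref{Sp} is essential — its classification of the minimal-dimensional maximal Levi subgroups of a symplectic group is what pins the admissible defects down to $\{0,\pm1,2\}$ — and it must be combined with the short divisibility-and-case analysis that rules out the nonzero possibilities for the defect sum.
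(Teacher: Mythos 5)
Your proposal is correct and follows essentially the same route as the paper: the same eigenvalue description of $C_d$ (the paper writes the diagonal matrix explicitly rather than deriving it from $\Sym^{2n-1}(\C^2)$, which is cosmetic), the same reduction of the $x^d=-I$ case to the $\GL_n(\C)$ convexity estimate, and the same use of Lemma \ref{Sp} together with the divisibility $d\mid 2n$ to force all the defects $a_\lambda\in\{0,\pm1,2\}$ to vanish in the odd case. The only substantive difference is that you explicitly rule out the configurations with $d$ even and $x^d=I$ (one or two symplectic factors plus fewer $\GL$ factors), a case the paper passes over in silence; your observation that these have strictly larger centralizer than $\GL_a(\C)^{d/2}$ is correct and fills that small omission.
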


 \section{$\SO_{2n+1}(\C)$}

 Let $C_{2n}$ be the Coxeter conjugacy class in $\SO_{2n+1}(\C)$ of order $2n$
 given by:

\[ C_{2n}= \begin{pmatrix}
  \omega_{2n}^{n} & & && & &&& \\
    &   \ddots & & & & & &&\\
  &  & \omega_{2n}^2 & & & && &\\
  &  & &  \omega_{2n} & & & & &\\
&  & &  & 1 & & & && \\
    & &  & & & \omega_{2n}^{-1}  && &\\
& &  & & & & \omega_{2n}^{-2} & &\\
  & &  & & & & & \ddots &\\
&  & & & & & & & \omega_{2n}^{-n}
\end{pmatrix}. \]

For $d|2n,$ let $d'=(2n)/d$, and let $C_d= C_{2n}^{2n/d}$ be an element of order $d$,
thus $C_{d}$ is the matrix:

\[ C_d=  \begin{pmatrix}
  \omega_{d}^{n} & & && & &&& \\
    &   \ddots & & & & & &&\\
  &  & \omega_{d}^2 & & & && &\\
  &  & &  \omega_{d} & & & & &\\
&  & &  & 1 & & & && \\
    & &  & & & \omega_{d}^{-1}  && &\\
& &  & & & & \omega_{d}^{-2} & &\\
  & &  & & & & & \ddots &\\
&  & & & & & & & \omega_{d}^{-n}
\end{pmatrix}. \]

Then for $d$ odd,
\[ Z_{\SO_{2n+1}(\C)}(C_{d}) = \SO_{d'+1}(\C) \times \GL_{d'}(\C)^{(d-1)/2} \subset \SO_{2n+1}(\C) = \SO_{dd'+1}(\C).\]

Note that for a general element $x_d$ of odd order $d$,
\[ Z_{\SO_{2n+1}(\C)}(x_{d}) = \SO_{2a_1+1}(\C) \times \prod \GL_{a_i}(\C) \subset \SO_{2n+1}(\C),\] in which there is a single $\GL_{a_i}(\C)$
corresponding to the eigenspaces $W_\lambda$ and $W_{\lambda^{-1}}$. Further, we have
\[ 2a_1+1 + 2 \sum_{i=2}^{d}  a_i = (2n+1),\]
this time there are separate terms $a_i=a_j$ corresponding to the eigenspaces $W_\lambda$ and $W_{\lambda^{-1}}$.

We need to prove that  $\dim Z_{\SO_{2n+1}(\C)}(x_{d}) = a_1(2a_1+1) +  \sum  a_i^2$ is minimal if and only if $x_d$ is a conjugate of $C_d$, thus if and only if $a_i = 2a_1$ for all $i \geq 2$.

Observe now that there is a bijection between those semisimple conjugacy classes in $\SO_{2n+1}(\C)$ and $\Sp_{2n}(\C)$ which do not have $-1$ as an eigenvalue by matching
dimensions of all eigenspaces not equal to 1, and for the eigenspace of dimension $(2d+1)$ of eigenvalue 1
for $\SO_{2n+1}(\C)$ matching with the eigenspace of dimension $(2d)$ for $\Sp_{2n}(\C)$ of eigenvalue 1.

Under this correspondence of semisimple conjugacy classes, centralizers match after changing $\SO_{2d+1}(\C)$ to $\Sp_{2d}(\C)$, thus they have the same dimension. Hence our theorem for $\Sp_{2n}(\C)$ for conjugacy classes of odd order diving $2n$ proves a similar theorem for $\SO_{2n+1}(\C)$ for conjugacy classes of odd order dividing
$2n$.

Next assume that $d$ is an even integer dividing $2n$, and $x_d$ an element
of $\SO_{2d+1}(\C)$ of order $d$. Write the centralizer in the form:
\[ Z_{\SO_{2n+1}(\C)}(x_{d}) = \SO_{d_0'}(\C)
\times \SO_{d_1'}(\C) \times  \prod \GL_{d_i}(\C) \subset \SO_{2n+1}(\C),\]
with $d_0'$ odd and $d_1'$ even and with
\[ d_0'+d_1' + \sum_{i=2}^{d}  d_i = (2n+1).\]

We need to prove that  $\dim Z_{\SO_{2n+1}(\C)}(x_{d})$  is minimal
if and only if $x_d$ is a conjugate of $C_d$, thus if and only if $a_i = 2a_1$ for all $i \geq 2$.

   Note also the following lemma.

   \begin{lemma} \label{son}
     Among the subgroups
     $ \SO_a(\C) \times \SO_{n-a}(\C)$ of $\SO_n(\C)$,
     the minimal dimension is achieved in exactly the following cases:
     \[ | a-(n-a)| \leq 1.\]
     Further, among the centralizers of elements of order 2 in
     $\SO_{2n+1}(\C)$,
     the minimal dimension is achieved by  $ \SO_n(\C) \times \SO_{n+1}(\C)$,
     whereas for $\SO_{2n}(\C)$,
     the minimal dimension is achieved either by  $ \SO_n(\C) \times \SO_{n}(\C)$,
 or     $ \SO_{n-1}(\C) \times \SO_{n+1}(\C)$, depending on whether $n$ is even or odd.
   \end{lemma}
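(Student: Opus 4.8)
The plan is to reduce everything to the elementary fact that a sum of squares with fixed total is minimized when the summands are as equal as possible — the same convexity inequality already used in the analysis of $\GL_n(\C)$ — together with a bookkeeping of parities forced by the determinant.

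First I would record $\dim \SO_k(\C) = \binom{k}{2}$, so that
\[
\dim\bigl(\SO_a(\C)\times \SO_{n-a}(\C)\bigr) = \tfrac12\bigl(a^2 + (n-a)^2\bigr) - \tfrac{n}{2}.
\]
Hence minimizing this over $0 \le a \le n$ is the same as minimizing $a^2 + (n-a)^2$ with $a + (n-a) = n$ fixed, and the inequality $p^2 + q^2 > (p+1)^2 + (q-1)^2$ for $q \ge p + 2$ (the estimate already invoked for $\GL_n(\C)$) forces the minimum to occur exactly when $|a-(n-a)| \le 1$; this is $a = n/2$ when $n$ is even, and $a \in \{(n-1)/2,\,(n+1)/2\}$ when $n$ is odd, the two choices giving the same subgroup up to interchanging the factors. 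That settles the first assertion.

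For the second assertion I would start from the eigenspace decomposition $\C^N = V_+ \oplus V_-$ of an involution $\theta \in \SO_N(\C)$, with $p = \dim V_+$ and $q = \dim V_-$. Since $\theta$ is an isometry of the symmetric form, $V_+ \perp V_-$, each eigenspace is nondegenerate, and $Z_{\SO_N(\C)}(\theta) = \bigl(\O_p(\C)\times \O_q(\C)\bigr) \cap \SO_N(\C)$, of dimension $\binom{p}{2} + \binom{q}{2}$ and with identity component $\SO_p(\C)\times\SO_q(\C)$. The determinant condition $(-1)^q = 1$ forces $q$ even, so for $N = 2n+1$ the pair $(p,q)$ has opposite parities, while for $N = 2n$ both entries are even. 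Applying the optimization of the first part under these parity constraints: for $N = 2n+1$ the closest admissible pair $\{n, n+1\}$ automatically has the right parities, giving minimal centralizer $\SO_n(\C)\times\SO_{n+1}(\C)$; for $N = 2n$ one needs $p,q$ both even and as close as possible, which is $\{n,n\}$ if $n$ is even and $\{n-1,n+1\}$ if $n$ is odd. Finally I would note that the degenerate involutions $\theta = 1$ and $\theta = -1$ only enlarge the centralizer and so are irrelevant to the minimum.

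The only genuinely delicate point is the interaction of the parity restriction $q \equiv 0 \pmod 2$ with the ``make the blocks as equal as possible'' optimization — in particular the split into the cases $n$ even and $n$ odd for $\SO_{2n}(\C)$; everything else is the convexity estimate already used for $\GL_n(\C)$ together with the standard description of centralizers of semisimple elements in orthogonal groups.
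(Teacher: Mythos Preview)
Your argument is correct. The paper does not actually supply a proof of this lemma --- it is stated as an elementary fact and then used --- so there is no ``paper's own proof'' to compare against; your write-up provides exactly the routine verification the authors omit, using the same convexity estimate they invoke for $\GL_n(\C)$ together with the parity constraint $\det\theta = (-1)^q = 1$.
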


Using Lemma \ref{son} and Lemma \ref{Bn},  
we find   that  if $\dim Z_{\SO_{2n+1}(\C)}(x_{d})$  is minimal
among all such subgroups of $\SO_{2n+1}(\C)$ with at most $d$ factors, then
for
\[ Z_{\SO_{2n+1}(\C)}(x_{d}) = \SO_{d_0'}(\C)
\times \SO_{d_1'}(\C) \times  \prod \GL_{d_i}(\C) \subset \SO_{2n+1}(\C),\]
we must be in one of the two cases:
\begin{enumerate}
\item $\SO_m(\C) \times \SO_{m+1}(\C) \times \GL_m(\C)^a \times \GL_{m+1}(\C)^b,$ or
  \item $\SO_m(\C) \times \SO_{m+1}(\C) \times \GL_{m-1}(\C)^a \times \GL_{m}(\C)^b.$ 
\end{enumerate}

In case (1),
adding up the dimensions of the various eigenspaces of $x_d$, we have,
\[ m + (m+1) + 2am + 2b(m+1) = 2n+1,\]
therefore, $2m(1+a+b) +(1+2b) =2n+1$.

On the other hand, $d$ is equal to the number of eigenvalues of $x_d$, and therefore, $d=2+2a+2b$, and the previous equation becomes $md+2b = 2n$.
As we are given that $d|2n$, we must have $d=2(1+a+b)| 2b$, which means $b=0$.

In case (2), adding up the dimensions of the various eigenspaces of $x_d$, we have,
\[ m + (m+1) + 2a(m-1) + 2bm = 2n+1,\]
therefore, $2m(1+a+b) +(1-2a) =2n+1$.

As before, $d$ is equal to the number of eigenvalues of $x_d$, and therefore, $d=2+2a+2b$, and the previous equation becomes $md-2a = 2n$.
As we are given that $d|2n$, we must have $d=2(1+a+b)| 2a$, which implies that $a=0$.

Thus the only option for the minimal dimensional centralizer is
\[ \SO_m(\C) \times \SO_{m+1}(\C) \times \GL_{m}(\C)^a ,\]
which completes the proof of the result for $\SO_{2n+1}(\C)$.

We summarize the conclusions drawn in this section.

\begin{prop}
For 
$\G=\SO_{2n+1}(\C)$, and for any $d|(2n)$, there is a unique conjugacy class in 
$\SO_{2n+1}(\C)$
containing an element $x_d$ of order $d$
of minimal dimensional centralizer $Z_\G(x_d)$,
which is represented by $C_d$. The  centralizer $Z_\G(C_d)$ is a Levi subgroup
if $d$ is odd, and is

\begin{eqnarray*} Z_\G(x_d) & = & \GL_a(\C)^{(d-2)/2}
  \times \SO_a(\C)\times \SO_{a+1}(\C) \hspace{5mm} {\rm if}\hspace{1cm} 2n = da {\rm ~with~} d {\rm ~even},\\
 Z_\G(x_d) &=& \GL_{2a}(\C)^{(d-1)/2} \times \SO_{2a+1}(\C) \hspace{2cm} {\rm if}\hspace{1cm} 2n = 2da {\rm ~with~} d {\rm ~odd}.\end{eqnarray*}
\end{prop}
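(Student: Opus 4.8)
The plan is to split into the cases $d$ odd and $d$ even, using throughout that $\SO_{2n+1}(\C)$ has trivial centre, so that ``order $d$ in the adjoint group'' simply means order $d$; that a semisimple element of order $d$ is determined up to $\SO_{2n+1}(\C)$-conjugacy by the multiplicities of its eigenvalues (which lie among the $d$-th roots of unity and, apart from $\pm 1$, occur in pairs $\{\lambda,\lambda^{-1}\}$); and that, exactly as in the $\GL_n(\C)$ and $\Sp_{2n}(\C)$ analyses, a centralizer-minimizing element of order $d$ has all $d$ of the $d$-th roots of unity among its eigenvalues. Recall that $h=2n$ here.

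For $d$ odd the element $x_d$ cannot have $-1$ as an eigenvalue, so I would invoke the bijection between semisimple conjugacy classes of $\SO_{2n+1}(\C)$ with no $(-1)$-eigenspace and those of $\Sp_{2n}(\C)$ with no $(-1)$-eigenspace: match all eigenspaces of eigenvalue $\neq 1$ by dimension, and match the odd-dimensional $1$-eigenspace (of dimension $2a_1+1$) on the orthogonal side with the $1$-eigenspace of dimension $2a_1$ on the symplectic side. Under this correspondence the centralizer $\SO_{2a_1+1}(\C)\times\prod\GL_{a_i}(\C)$ goes to $\Sp_{2a_1}(\C)\times\prod\GL_{a_i}(\C)$, of the same dimension; hence minimality of $\dim Z_{\SO_{2n+1}(\C)}(x_d)$ among elements of order $d\mid 2n$ is equivalent to the analogous minimality in $\Sp_{2n}(\C)$. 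The Proposition already proved for $\Sp_{2n}(\C)$, applied to odd $d$ dividing $2n$ (whence $d\mid n$), then yields a unique minimizing class, namely $C_d$, with centralizer $\GL_{2a}(\C)^{(d-1)/2}\times\SO_{2a+1}(\C)$ for $a=n/d$, which is a Levi subgroup.

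For $d$ even, both a $1$-eigenspace and a $(-1)$-eigenspace may occur, so I would write the centralizer as $\SO_{d_0'}(\C)\times\SO_{d_1'}(\C)\times\prod_i\GL_{d_i}(\C)$ with $d_0'$ odd and $d_1'$ even. Applying Lemma \ref{son} to the two orthogonal factors and Lemma \ref{Bn} to each $\SO(W_1)\times\GL(W_\lambda)$-type maximal-parabolic piece should force any dimension-minimizing configuration into one of the two shapes
\begin{enumerate}
\item $\SO_m(\C)\times\SO_{m+1}(\C)\times\GL_m(\C)^p\times\GL_{m+1}(\C)^q$, or
\item $\SO_m(\C)\times\SO_{m+1}(\C)\times\GL_{m-1}(\C)^p\times\GL_m(\C)^q$.
\end{enumerate}
Then I would count: the number of eigenvalues is $d=2+2p+2q$, while summing eigenspace dimensions gives $md+2q=2n$ in case (1) and $md-2p=2n$ in case (2); since $d\mid 2n$ with $0\le 2q<d$ (resp.\ $0\le 2p<d$), this forces $q=0$ (resp.\ $p=0$). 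Both shapes thus collapse to $\SO_m(\C)\times\SO_{m+1}(\C)\times\GL_m(\C)^{(d-2)/2}$ with $m=2n/d$, which, writing $a=2n/d$, is exactly $Z_\G(C_d)=\GL_a(\C)^{(d-2)/2}\times\SO_a(\C)\times\SO_{a+1}(\C)$; this establishes both uniqueness and the stated form of the centralizer.

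The main obstacle I anticipate is the even case: one must rigorously exclude all ``mixed'' Levi shapes so that only (1) and (2) survive, which is precisely where Lemmas \ref{son} and \ref{Bn} have to be combined carefully, together with the standing observation that one may replace the isotropic summands in a Levi decomposition without altering the eigenvalues of $x_d$; after that, the eigenvalue/dimension count combined with the divisibility $d\mid 2n$ is the clinching step that rules out the genuinely distinct configurations with $p$ and $q$ both positive. The odd case, by contrast, is comparatively routine, being essentially a transport of the already-established symplectic result.
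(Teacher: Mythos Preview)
Your proposal is correct and follows essentially the same route as the paper: the odd-$d$ case is handled by the dimension-preserving bijection with $\Sp_{2n}(\C)$ (matching $\SO_{2a_1+1}$ with $\Sp_{2a_1}$), and the even-$d$ case is reduced via Lemmas~\ref{son} and~\ref{Bn} to exactly the two shapes you list, after which the divisibility count $md\pm 2q=2n$ with $0\le 2q<d$ forces $q=0$ (resp.\ $p=0$). The paper's argument is identical in structure and detail, differing only in the choice of letters ($a,b$ in place of your $p,q$).
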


 \section{$\SO_{2n+2}(\C)$}

 Let $C_{2n}$ be the Coxeter conjugacy class in $\SO_{2n+2}(\C)$ of order $2n$
 given by:

\[ C_{2n} =  \begin{pmatrix}
  \omega_{2n}^{n} & & && & &&&& \\
    &   \ddots & & & & & &&& \\
  &  & \omega_{2n}^2 & & & &&& & \\
  &  & &  \omega_{2n} & & & && & \\
  &  & &  & 1 &  & &&& \\
  &  & &  &  & 1  & & && \\
    & &  & & & & \omega_{2n}^{-1}  && & \\
& &  & & & & &\omega_{2n}^{-2} & &  \\
  & &  & & & & & & \ddots &  \\
&  & & & & & & & & \omega_{2n}^{-n}
\end{pmatrix}. \]

For $d|2n,$ let $C_d= C_{2n}^{2n/d}$ be an element of order $d$,
thus $C_{d}$ is the matrix:

\[  C_d = \begin{pmatrix}
  \omega_{d}^{n} & & && & &&&& \\
    &   \ddots & & & & & &&& \\
  &  & \omega_{d}^2 & & & &&& & \\
  &  & &  \omega_{d} & & & && & \\
  &  & &  & 1 &  & &&& \\
  &  & &  &  & 1  & & && \\
    & &  & & & & \omega_{d}^{-1}  && & \\
& &  & & & & &\omega_{d}^{-2} & &  \\
  & &  & & & & & & \ddots &  \\
&  & & & & & & & & \omega_{d}^{-n}
\end{pmatrix}. \]

The standard representation of  $\SO_{2n+2}(\C)$ treated as a representation of the cyclic group $\Z/(2n)$
generated by $C_{2n}$ is the sum of the regular representation of $\Z/(2n)$ with the regular representation of $\Z/2 = \Z/(2n)/2\Z/(2n)$.
Therefore, for $d|(2n)$, standard representation of  $\SO_{2n+2}(\C)$ treated as a representation of the cyclic group $\Z/d$
generated by $C_d$ is the sum of $d'=(2n)/d$ many copies of the regular representation of $\Z/d$ plus either the two dimensional trivial representation of $\Z/d$ or the regular representation of $\Z/2$ treated as a representation of $\Z/d$ if $d$ is even and $d'$ is odd.

Then for $d$ odd, $d'=(2n)/d$, we have:
\[ Z_{\SO_{2n+2}(\C)}(C_{d}) = \SO_{d'+2}(\C) \times \GL_{d'}(\C)^{(d-1)/2} \subset \SO_{2n+2}(\C) = \SO_{dd'+2}(\C).\]

On the other hand, for $d$ even, there are two cases depending on the parity of $d'=(2n)/d$:

\begin{enumerate}
  \item $d'$ odd:
\[ Z_{\SO_{2n+2}(\C)}(C_{d}) = \SO_{d'+1}(\C) \times \SO_{d'+1}(\C) \times \GL_{d'}(\C)^{(d-2)/2} \subset \SO_{2n+2}(\C) = \SO_{dd'+2}(\C).\]
\item $d'$ even:
\[ Z_{\SO_{2n+2}(\C)}(C_{d}) = \SO_{d'}(\C) \times \SO_{d'+2}(\C) \times \GL_{d'}(\C)^{(d-2)/2} \subset \SO_{2n+2}(\C) = \SO_{dd'+2}(\C).\]
  \end{enumerate}
Note that for a general element $x_d$ of odd order $d$,
\[ Z_{\SO_{2n+2}(\C)}(x_{d})
= \SO_{2a_1}(\C) \times \prod \GL_{a_i}(\C) \subset \SO_{2n+2}(\C),\]
with \[ 2a_1 + \sum_{i=2}^{d}  a_i = (2n+2).\]

Assume that $x_d$ is of odd order which is $d$.
By Lemma \ref{Dn}, if $\dim Z_{\SO_{2n+2}(\C)}(x_{d})$ is to assume the minimal dimension
among centralizer of elements of order $d$, then
we must be in one of the three cases:

\begin{enumerate}
\item $  \hspace{ 1cm} Z_{\SO_{2n+2}(\C)}(x_{d}) = \SO_{2m}(\C) \times \GL_{2m}(\C)^{a} \times \GL_{2m+1}(\C)^{b},$

\item $  \hspace{ 1cm} Z_{\SO_{2n+2}(\C)}(x_{d}) = \SO_{2m}(\C) \times \GL_{2m}(\C)^{a} \times \GL_{2m-1}(\C)^{b},$

  \item $  \hspace{ 1cm} Z_{\SO_{2n+2}(\C)}(x_{d}) = \SO_{2m}(\C) \times \GL_{2m-2}(\C)^{a} \times \GL_{2m-1}(\C)^{b}.$
  
  \end{enumerate}

We analyze these possibilities one by one, making essential use of the hypothesis that $d|2n$, the Coxeter number
of $\SO_{2n+2}(\C)$.

In case (1), we have,
\[ 2m+ 2a(2m) + 2b(2m+1) = 2n+2, i.e., 2m(1+2a+2b) + 2b = 2n+2.\]
As $d=(1+2a+2b) | 2n$, $d| (2b-2)$, so $b$ must be
1.

In case (2), we have,
\[ 2m+ 2a(2m) + 2b(2m-1) = 2n+2, i.e., 2m(1+2a+2b) - 2b = 2n+2.\]
As $d=(1+2a+2b) | 2n$, $d| (2b+2)$,
so $b = -1$, which is not an option, so case (2) does not happen.

In case (3), we have,
\[ 2m+ 2a(2m-2) + 2b(2m-1) = 2n+2, i.e., 2m(1+2a+2b) -4a- 2b = 2n+2.\]
As $d=(1+2a+2b) | 2n$, $d| (4a+2b+2)$,
but as $d$ is assumed to be odd, $d| (2a+b+1)$ with $d=1+2a+2b$, which means $b=0$ is the only option.

Thus, among elements $x_d$ of odd order $d$, the possibilities for 
$Z_{\SO_{2n+2}(\C)}(x_{d})$, assuming it to have the minimal dimension, is:

\begin{enumerate}
\item $  \hspace{ 1cm} Z_{\SO_{2n+2}(\C)}(x_{d}) = \SO_{2m}(\C) \times \GL_{2m}(\C)^{a} \times \GL_{2m+1}(\C),$

  \item $  \hspace{ 1cm} Z_{\SO_{2n+2}(\C)}(x_{d}) = \SO_{2m}(\C) \times \GL_{2m-2}(\C)^{a}.$
  
  \end{enumerate}

Case (2) has the same centralizer $Z_{\SO_{2n+2}(\C)}(x_{d})$  as for $C_d$, and it is easy to see that in this case $x_d$ and $C_d$ are conjugate. On the other hand, Case (1) also gives rise to centralizers of minimal dimension, which we make precise in the following lemma.

\begin{lemma} \label{odd}
  For $d\geq 3$, an odd integer, dividing $2n$, let $m = n/(2d)$. Then there is an element $x_d$ of order $d$ whose centralizer is
  \[  Z_{\SO_{2n+2}(\C)}(x_{d}) = \SO_{2m}(\C) \times \GL_{2m}(\C)^{\frac{d-3}{2}} \times \GL_{2m+1}(\C),\]
  whereas the centralizer of $C_d$ is:
  \[ Z_{\SO_{2n+2}(\C)}(C_{d}) = \SO_{2m+2}(\C) \times \GL_{2m}(\C)^{\frac{d-1}{2} }.\]
  The dimensions of the centralizers of $x_d$ and $C_d$ are the same, but they are not conjugate.
  \end{lemma}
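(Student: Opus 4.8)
The plan is to produce $x_d$ explicitly through the multiplicities of its eigenvalues on the standard $(2n+2)$-dimensional orthogonal space $V$, and then to read off and compare the two centralizers. Throughout I would use, as in the preceding sections, that for a semisimple $g\in\SO(V)$ with no eigenvalue equal to $-1$ one has $Z_{\SO(V)}(g)=\SO(V_1)\times\prod\GL(V_\lambda)$, the product over unordered pairs $\{\lambda,\lambda^{-1}\}$ of eigenvalues $\neq 1$, with $V_\mu$ the $\mu$-eigenspace. Since $d$ is odd and $d\mid 2n$, the integer $d'=2n/d$ is even; I shall write everything in terms of $d'$, the quantities in the statement being recovered upon setting $2m=d'$.

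First I would recall the centralizer of $C_d$: the description of $C_d$ at the start of the section shows that, as a module for $\langle C_d\rangle\cong\Z/d$, the space $V$ is $d'$ copies of the regular representation together with a two-dimensional trivial summand, so the eigenvalue $1$ of $C_d$ has multiplicity $d'+2$ and every other eigenvalue has multiplicity $d'$; hence $Z_{\SO_{2n+2}(\C)}(C_d)=\SO_{d'+2}(\C)\times\GL_{d'}(\C)^{(d-1)/2}$. Next I would take $x_d$ to be the semisimple element of $\SO_{2n+2}(\C)$ whose eigenvalues are $d$-th roots of unity, with $1$ occurring with multiplicity $d'$, one conjugate pair $\{\omega_d,\omega_d^{-1}\}$ occurring with multiplicity $d'+1$, and every other conjugate pair with multiplicity $d'$; equivalently, $x_d$ is obtained from $C_d$ by transferring two units of multiplicity off the eigenvalue $1$ onto a single non-real conjugate pair. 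One checks at once that these multiplicities are non-negative and sum to $2n+2$, that the product of all eigenvalues is $1$ (so $x_d\in\SO_{2n+2}(\C)$, not merely $\O_{2n+2}(\C)$), and that $x_d$ has order exactly $d$ since $\omega_d$ is among its eigenvalues; reading off the centralizer then yields $Z_{\SO_{2n+2}(\C)}(x_d)=\SO_{d'}(\C)\times\GL_{d'+1}(\C)\times\GL_{d'}(\C)^{(d-3)/2}$.

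It remains to compare the two. The equality $\dim Z_{\SO_{2n+2}(\C)}(x_d)=\dim Z_{\SO_{2n+2}(\C)}(C_d)$ is a one-line computation from $\dim\SO_{2k}=k(2k-1)$ and $\dim\GL_k=k^2$; alternatively it is already contained in the case analysis preceding the lemma, in which $x_d$ is exactly the surviving ``Case (1)'' configuration, shown there to attain the same minimal centralizer dimension as $C_d$. Non-conjugacy is then immediate: the eigenvalue $1$ has multiplicity $d'+2$ for $C_d$ but $d'$ for $x_d$, so the two elements have distinct characteristic polynomials and are not conjugate even in $\GL_{2n+2}(\C)$, a fortiori not in $\SO_{2n+2}(\C)$ or $\O_{2n+2}(\C)$.

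The only step requiring genuine care, and the closest thing to an obstacle, will be the bookkeeping in the second paragraph: verifying that the prescribed multiplicities define a legitimate element of $\SO_{2n+2}(\C)$ of order exactly $d$ — non-negativity of the parts, the correct total $2n+2$, determinant one, and order precisely $d$ rather than a proper divisor — so that $x_d$ is an honest group element and not merely a formal Levi-subgroup configuration allowed by Lemma \ref{Dn}. Once that is settled, the lemma reduces to substitution and the elementary dimension count above.
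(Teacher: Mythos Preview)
Your proof is correct and follows essentially the same approach as the paper: the lemma there is stated without a separate proof, being a summary of the immediately preceding case analysis (your $x_d$ is exactly the surviving ``Case (1)'' configuration with $b=1$, $a=(d-3)/2$), and you have supplied precisely the bookkeeping details the paper leaves implicit. Your explicit check that the product of eigenvalues is $1$ and that $x_d$ has exact order $d$ is a useful addition, and your observation that $2m=d'=2n/d$ (hence $m=n/d$) quietly corrects what appears to be a typo in the statement, where $m=n/(2d)$ should read $m=n/d$.
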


Assume next that $x_d$ is of even order which is $d$.
By Lemma \ref{Dn}, we must be in one of the four cases:

\begin{enumerate}
\item $ \hspace{ 1cm} Z_{\SO_{2n+2}(\C)}(x_{d}) = \SO_{m}(\C) \times \SO_{m}(\C) \times
  \GL_{m}(\C)^{a} \times \GL_{m+1}(\C)^{b},$

\item $ \hspace{ 1cm} Z_{\SO_{2n+2}(\C)}(x_{d}) = \SO_{m}(\C) \times \SO_{m}(\C) \times \GL_{m}(\C)^{a} \times \GL_{m-1}(\C)^{b},$

\item $  \hspace{ 1cm}Z_{\SO_{2n+2}(\C)}(x_{d}) = \SO_{m}(\C) \times \SO_{m}(\C) \times \GL_{m-2}(\C)^{a} \times \GL_{m-1}(\C)^{b}.$
  \item $  \hspace{ 1cm} _{\SO_{2n+2}(\C)}(x_{d}) = \SO_{m+2}(\C) \times \SO_{m}(\C) \times \GL_{m+1}(\C)^{a} \times \GL_{m}(\C)^{b}.$
  
  \end{enumerate}

We analyze these possibilities one by one.

In case (1), we have,
\[ 2m+ 2a(m) + 2b(m+1) = 2n+2, i.e., 2m(1+a+b) + 2b = 2n+2.\]
As $d=(2+2a+2b) | 2n$, $d| (2b-2)$, so $b$ must be
1.

In case (2), we have,
\[ 2m+ 2a(m) + 2b(m-1) = 2n+2, i.e., 2m(1+a+b) - 2b = 2n+2.\]
As $d=(2+2a+2b) | 2n$, $d = (2+2a+2b)| (2b+2)$,
so $a = 0$ is the only option.

In case (3), we have,
\[ 2m+ 2a(m-2) + 2b(m-1) = 2n+2, i.e., 2m(1+a+b) -4a- 2b = 2n+2.\]
As $d=(2+2a+2b) | 2n$, $d = (2+2a+2b)| (4a+2b+2)$,
which means $a=0$ is the only option, but case (2) and case (3) for $a=0$ are the same, so we need to consider only
one of them.

 In case (4), we have,
\[ 2m+2+ 2a(m+1) + 2bm = 2n+2, i.e., 2m(1+a+b) +2a+2 = 2n+2.\]
As $d=(2+2a+2b) | 2n$, $d = (2+2a+2b)| (2a)$,
 which means $a=0$ is the only option.

Thus, among elements $x_d$ of even order $d$, the possibilities for 
$Z_{\SO_{2n+2}(\C)}(x_{d})$,  when  of minimal dimension, are:

\begin{enumerate}
\item   $\hspace{ 1cm} Z_{\SO_{2n+2}(\C)}(x_{d}) = \SO_{m}(\C) \times \SO_{m}(\C) \times \GL_{m}(\C)^{a} \times \GL_{m+1}(\C),$

\item   $\hspace{ 1cm} Z_{\SO_{2n+2}(\C)}(x_{d}) = \SO_{m}(\C) \times \SO_{m}(\C) \times \GL_{m-1}(\C)^{b}, $
  
  \item  $ \hspace{ 1cm} Z_{\SO_{2n+2}(\C)}(x_{d}) =  \SO_{m+2}(\C) \times \SO_{m}(\C) \times \GL_{m}(\C)^{b}. $
    
  \end{enumerate}

Now, we express the above centralizer using $d|(2n)$, and $d'=(2n)/d$, and we find the 
possibilities for 
$Z_{\SO_{2n+2}(\C)}(x_{d})$,  when  of minimal dimension, are:

\begin{enumerate}
\item   $\hspace{ 1cm} Z_{\SO_{2n+2}(\C)}(x_{d}) = \SO_{d'}(\C) \times \SO_{d'}(\C) \times \GL_{d'}(\C)^{\frac{d}{2}-2} \times \GL_{d'+1}(\C),$

\item   $\hspace{ 1cm} Z_{\SO_{2n+2}(\C)}(x_{d}) = \SO_{d'+1}(\C) \times \SO_{d'+1}(\C) \times \GL_{d'}(\C)^{\frac{d}{2}-1}, $
  
  \item  $ \hspace{ 1cm} Z_{\SO_{2n+2}(\C)}(x_{d}) =  \SO_{d'+2}(\C) \times \SO_{d'}(\C) \times \GL_{d'}(\C)^{\frac{d}{2}-1}. $
    
  \end{enumerate}

It can be seen that the dimension of $Z_{\SO_{2n+2}(\C)}(x_{d})$ in cases (1) and (3) are the same, and that the case (2) has dimension
which is one less. However, we remind ourselves that if $(d'+1)$ is odd, then case (2) does not arise as the centralizer of an
element in $\SO_{2n+2}(\C)$. Therefore if $d'$ is even, then there are two options for 
$Z_{\SO_{2n+2}(\C)}(x_{d})$,  with the same minimal dimension, corresponding to cases (1) and (3), whereas if $d'$ is odd, there is
the only  option for 
$Z_{\SO_{2n+2}(\C)}(x_{d})$ for it to have the minimal dimension, corresponds to case (2).
We summarize the above discussion  in the following lemma.

\begin{lemma} \label{even}
  For $d$, an even integer, dividing $2n$, suppose $d' = 2n/d$ is also even.
  Then there is an element $x_d$ of order $d$ whose centralizer is
  \[  Z_{\SO_{2n+2}(\C)}(x_{d}) = \SO_{d'}(\C) \times \SO_{d'}(\C) \times \GL_{d'}(\C)^{\frac{d}{2}-2} \times \GL_{d'+1}(\C),\]
    whereas the centralizer of $C_d$ is:
    \[ Z_{\SO_{2n+2}(\C)}(C_{d}) = \SO_{d'}(\C) \times \SO_{d'+2}(\C)
      \times \GL_{d'}(\C)^{\frac{d}{2}-1}. \]
  
    The dimensions of the centralizers of $x_d$ and $C_d$ are the
    same, but they are not conjugate.

  If $d'=2n/d$ is odd, then $C_d$ is the only conjugacy class in
  $\SO_{2n+2}(\C)$ containing an element of order $d$ with minimal
  dimensional centralizer.
  \end{lemma}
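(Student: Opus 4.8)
The plan is to read both centralizers off their eigenvalue data, use a determinant/parity constraint to select which of the candidate minimal centralizers actually occur inside $\SO_{2n+2}(\C)$, and then certify the (non)conjugacy by comparing multisets of eigenvalues.

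First I would record $Z_{\SO_{2n+2}(\C)}(C_d)$. Viewing the standard representation of $\SO_{2n+2}(\C)$ as a module over the cyclic group generated by $C_d$, which is $\Z/d$, it decomposes (in the present case $d$ even, $d'=2n/d$ even) as $d'$ copies of the regular representation of $\Z/d$ together with the two-dimensional trivial representation, as recorded before the lemma. Hence the eigenvalue $1$ has multiplicity $d'+2$, the eigenvalue $-1$ has multiplicity $d'$, and each conjugate pair $\{\omega_d^j,\omega_d^{-j}\}$ with $1\le j\le d/2-1$ has multiplicity $d'$ on each member; grouping the $\pm1$-eigenspaces into orthogonal factors and each conjugate pair into a general linear factor gives $Z_{\SO_{2n+2}(\C)}(C_d)=\SO_{d'}(\C)\times\SO_{d'+2}(\C)\times\GL_{d'}(\C)^{d/2-1}$, which is exactly configuration $(3)$ of the list obtained above.

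Next, for an arbitrary $x_d$ of even order $d$ with centralizer of minimal dimension, I would invoke the reduction to the three configurations $(1),(2),(3)$ already carried out above via Lemma~\ref{Dn}, together with the key parity observation: each conjugate pair of eigenvalues contributes $1$ to $\det x_d$ and $\dim W_1\equiv\dim W_{-1}\pmod 2$, so $\det x_d=(-1)^{\dim W_{-1}}$, and hence membership in $\SO_{2n+2}(\C)$ forces $\dim W_{-1}$ (and $\dim W_1$) to be even. Using $\dim\SO_k=k(k-1)/2$ and $\dim\GL_k=k^2$ one checks that configurations $(1)$ and $(3)$ have dimension $\tfrac{d}{2}(d')^2+d'+1$, while $(2)$ has dimension $\tfrac{d}{2}(d')^2+d'$. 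When $d'$ is even, configuration $(2)$ would require $\dim W_{-1}=d'+1$ odd, so it is excluded, whereas $(1)$ (with $\dim W_{\pm1}=d'$) and $(3)$ (with $\{\dim W_1,\dim W_{-1}\}=\{d',d'+2\}$) are both admissible; thus the minimal dimension is $\tfrac{d}{2}(d')^2+d'+1$, realized in configurations $(1)$ and $(3)$. To exhibit the advertised $x_d$, I take the semisimple element whose $\pm1$-eigenspaces have dimension $d'$, one conjugate pair of eigenspaces has dimension $d'+1$, and the remaining $d/2-2$ conjugate pairs have dimension $d'$: the multiplicities sum to $2n+2$ and $\det=(-1)^{d'}=1$, so $x_d\in\SO_{2n+2}(\C)$, it has order exactly $d$, and its centralizer is $\SO_{d'}(\C)\times\SO_{d'}(\C)\times\GL_{d'}(\C)^{d/2-2}\times\GL_{d'+1}(\C)$, of the same dimension as $Z_{\SO_{2n+2}(\C)}(C_d)$. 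Since a semisimple element of $\SO_{2n+2}(\C)$ having $1$ or $-1$ among its eigenvalues is determined up to conjugacy, already in $\O_{2n+2}(\C)$, by its multiset of eigenvalues, and those of $x_d$ and $C_d$ differ, $x_d$ and $C_d$ are not conjugate.

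For the last assertion, when $d'$ is odd the same parity argument eliminates configurations $(1)$ and $(3)$, since in each of them $W_{-1}$ would have odd dimension, and leaves only $(2)$, which has strictly smaller dimension; configuration $(2)$ forces $\dim W_1=\dim W_{-1}=d'+1$ and all $d/2-1$ conjugate pairs of dimension $d'$, a single multiset of eigenvalues, realized by $C_d$, whose centralizer for $d'$ odd is $\SO_{d'+1}(\C)\times\SO_{d'+1}(\C)\times\GL_{d'}(\C)^{d/2-1}$. Hence $C_d$ is the unique conjugacy class of order $d$ with minimal dimensional centralizer. The main obstacle here is not any single hard step but the bookkeeping of which candidate centralizer is genuinely realized by an element of $\SO_{2n+2}(\C)$ rather than only by an element of $\O_{2n+2}(\C)$ or as an abstract subgroup; the identity $\det x_d=(-1)^{\dim W_{-1}}$ is precisely what decides this, and the remaining dimension comparisons are routine.
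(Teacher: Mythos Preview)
Your argument is correct and follows the same route as the paper: the lemma is stated there as a summary of the preceding case analysis, and you recapitulate exactly that analysis, reading off $Z_{\SO_{2n+2}(\C)}(C_d)$ from the eigenvalue multiplicities, invoking the reduction to the three configurations via Lemma~\ref{Dn}, and then selecting among them by the parity of $d'$. The one place where you are more explicit than the paper is in isolating the identity $\det x_d=(-1)^{\dim W_{-1}}$ as the mechanism that rules out configuration~(2) when $d'$ is even and configurations~(1),(3) when $d'$ is odd; the paper only says that case~(2) ``does not arise as the centralizer of an element in $\SO_{2n+2}(\C)$'' when $d'+1$ is odd, leaving the reason implicit.
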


  Next, let us assume that $x_d$ is of order $d$, an even integer, in
  the adjoint group of $\SO_{2n+2}(\C)$, but of order $2d$ in
  $\SO_{2n+2}(\C)$, thus $x_d^d=-1 \in \SO_{2n+2}(\C)$.  Of course,
  the analysis above continues to hold good except that we need to pay
  attention to the fact that although $x_d$ has order $2d$, we are
  comparing the dimension of its centralizer with all elements of
  $\SO_{2n+2}(\C)$ whose order in the adjoint group is $d$.

In this case, none of the eigenvalues of $x_d$ can be 1,$-1$, more
precisely, the eigenvalues of $x_d$ are $(2d)$-th roots of unity but
none are $d$-th roots of unity, which are $d$-many $(2d)$-th roots of
1, and under the equivalence $\lambda \sim \lambda^{-1}$, there are
$d/2$ many equivalence classes.  The centralizer of $x_d$ in
$\SO_{2n+2}(\C)$ is $\prod \GL(W_\lambda)$ where $W_\lambda$ are the
eigenspaces of $x_d$ where only one of $\lambda,\lambda^{-1}$ is taken
from all the eigenvalues of $x_d$, which gives at most $d/2$ many
$\lambda$'s in the centralizer which is $\prod \GL(W_\lambda)$.  We
have $\sum \dim (W_\lambda) = (n+1)$ with $d|2n$, or
$(d/2)|n$. Therefore the minimal dimension of $\prod \GL(W_\lambda)$
is achieved when all but one of $W_\lambda$ have the same dimension,
and the remaining $W_\lambda$ has one extra dimension.

By the description of the centralizer of $C_d$, to prove that the
dimension of the centralizer of $C_d$ is smaller than the dimension of
the centralizer of $x_d$, an element of order $(2d)$ in
$\SO_{2n+2}(\C)$ but of order $d$ in the adjoint group, we are reduced
to checking that (use the computation on $Z_{\SO_{2n+2}(\C)}(x_{d})$
mentioned earlier which had two cases depending on the parity of
$d'=(2n)/d$):
\[ \dim \GL_{2a}(\C) > \max \dim \{ \SO_a(\C)  \times \SO_a(\C),
\SO_{a+1}(\C) \times \SO_{a-1}(\C),\]
whose pleasant task we leave to the reader.
\section{Exceptional groups}\label{exceptional_groups}
            In this section, when $m|h$, we determine 
            the conjugacy classes of maximal dimension 
            among the conjugacy classes consisting of order $m$
            elements. 
        For the sake of brevity, the type 
    of centralizer of an element in $G$ will be the type of the Lie algebra of the
    derived group of the centralizer. 
\begin{theorem}
      Let $G$ be an adjoint simple group of exceptional type. 
      Let $h$ be the Coxeter number of $G$. Let $m|h$ and let 
      $x\in G$ be an element of order $m$. Assume that 
      $m\neq 4$ when $G$ is of type ${\rm E}_6$
      and $m\neq 9$ when $G$ is of type ${\rm E_7}$. Then there is a unique 
      conjugacy class of order $m$ elements which 
      has the largest dimension among the conjugacy classes of 
      order $m$ elements. Moreover, we have
      \begin{enumerate}
      \item When $G$ is of type ${\rm E}_6$ there are  three conjugacy
        classes, consisting of elements of order $4$,
        which attain the maximal
        dimension among conjugacy classes of order $4$ elements.
        One of these classes contains $C_h^3$ 
        whose centralizer is of type $2A_2+A_1$. 
        The other two conjugacy
        class contains elements with centralizer of type $A_3+A_1$. 
     \item When $G$ is of type ${\rm E}_7$ there are two conjugacy
        classes, consisting of elements of order $9$,
        which attain the maximal
        dimension among conjugacy classes of order $9$ elements. One
        of these classes contains $C_h^2$ whose centralizer is of type $4A_1$. 
        The other conjugacy
        class contains elements with centralizer of type $A_1+A_2$. 
        \end{enumerate}
      \end{theorem}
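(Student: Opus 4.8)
The plan is to argue entirely with Kac coordinates, as reviewed in Section \ref{Kac}. Since $\G$ is adjoint, an element $x$ of order $m$ in $\G$ is, up to conjugacy, of the form $x(s)$ for a tuple $s=(s_0,s_1,\dots,s_\ell)$ of non-negative integers with $\sum_{i=0}^\ell a_is_i=m$ and $\gcd(s_i)=1$, and $x(s_1)\sim x(s_2)$ precisely when $s_1,s_2$ lie in one $\Omega$-orbit. The connected centralizer of $x(s)$ is generated by $\T$ together with the root groups of the root subsystem $\Phi_{J(s)}$ whose basis is $J(s)=\{\alpha_i:s_i=0,\ 0\le i\le\ell\}$, a proper subdiagram of the extended Dynkin diagram; hence $\dim Z_\G(x(s))=\ell+|\Phi_{J(s)}|$. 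Maximizing the dimension of the conjugacy class of $x$ is thus the same as minimizing $|\Phi_{J(s)}|$ over admissible $s$. The key auxiliary constraint is that each $s_i$ with $\alpha_i\notin J(s)$ is $\ge 1$, so $\sum_{\alpha_i\notin J(s)}a_i\le m$, i.e.
\[ \sum_{\alpha_i\in J(s)}a_i\ \ge\ h-m ; \]
conversely a subdiagram $J$ arises from an admissible tuple exactly when this inequality holds and the complementary marks admit a valid filling (a mild gcd condition, automatic as soon as $\alpha_0\notin J$).

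\textbf{Reduction to a combinatorial enumeration.} First I would dispose of the boundary cases: $m=1$ is trivial, and $m=h$ is Kostant's theorem already proved above (the unique regular class, $J=\emptyset$); for $1<m<h$ one has $m\le h/2$. By Proposition \ref{minimal}, applied with $\G$ the simply connected cover (so that $\wG$ is our adjoint group and $\hat\rho(e^{2\pi i/m})$ is exactly $C_m=C_h^{h/m}=\rho^\vee(e^{2\pi i/m})$, using that $\G$ is simply laced for $\G$ of type $E$ and self-dual for $G_2,F_4$), the minimum of $|\Phi_{J(s)}|$ is attained at $C_m$. Since $C_m$ acts on a root $\alpha$ by $e^{2\pi i\,{\rm ht}(\alpha)/m}$, this minimum equals $|\Phi_m|=2N_m$, where, writing $e_1,\dots,e_\ell$ for the exponents,
\[ N_m=\#\{\alpha\in\Phi^+:m\mid{\rm ht}(\alpha)\}=\sum_{k\ge1}\#\{i:e_i\ge km\}, \]
the second equality because the number of positive roots of height $j$ is $\#\{i:e_i\ge j\}$. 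This fixes the target dimension $\ell+2N_m$ for each pair $(\G,m)$ purely from the exponents. It then remains, for each exceptional type and each divisor $1<m<h$, to list all subdiagrams $J$ of the extended Dynkin diagram with $|\Phi_J|=2N_m$ and $\sum_{\alpha_i\in J}a_i\ge h-m$ (using the Bourbaki marks and a little graph combinatorics), to discard those whose complement cannot be filled in, and to count the $\Omega$-orbits of the survivors.

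\textbf{Outcome and the exceptions.} I expect that in every case the surviving subdiagrams form a single $\Omega$-orbit, namely the one carrying the Kac coordinates of $C_m$ (identified from the fact that its simple roots are the roots of height $m$, possibly together with one of height $2m$, cf.\ Proposition \ref{classical} and \cite{Polo}), except for $(\G,m)=(E_6,4)$ and $(E_7,9)$. For $E_6$, $m=4$: $2N_4=14$, the subdiagrams of that size are of type $2A_2+A_1$ or $A_3+A_1$, and besides the orbit of $C_h^3$ (type $2A_2+A_1$) further admissible ones of type $A_3+A_1$ survive; one checks via the explicit order-$3$ rotation generating $\Omega$ which of them are $\Omega$-conjugate, obtaining the stated total. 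For $E_7$, $m=9$: $2N_9=8$, and besides the orbit of $C_h^2$ (type $4A_1$) a subdiagram of type $A_1+A_2$ survives and is not $\Omega$-conjugate to it, giving two classes. For $G_2$, $F_4$, $E_8$ one has $|\Omega|=1$, which makes the bookkeeping routine, and uniqueness holds throughout.

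\textbf{Main obstacle.} The delicate step is precisely this last enumeration. One must (i) be certain the list of subdiagrams $J$ with $|\Phi_J|=2N_m$ is complete — ruling out, for instance, alternative semisimple types of the same root count that happen not to embed in the extended diagram; (ii) correctly decide realizability, since a subdiagram meeting the mark inequality can still fail to occur when the complement is a single node of large mark or has gcd $>1$ against $m$; and (iii) correctly partition the realizable subdiagrams into $\Omega$-orbits. It is this orbit count that distinguishes the generic uniqueness statement from the two genuine exceptions $(E_6,4)$ and $(E_7,9)$ and so underpins the ``moreover'' clause; all of it is carried out type by type.
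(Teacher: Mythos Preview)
Your approach is essentially the same as the paper's: both argue via Kac coordinates and a case-by-case enumeration of admissible tuples for each exceptional type and each divisor $m\mid h$, identifying the centralizer types and counting $\Omega$-orbits. Your framing adds the minor organizational step of computing the target value $2N_m$ in advance from the exponents and invoking Proposition~\ref{minimal} to know the minimum is achieved at $C_m$, whereas the paper simply enumerates $X(G,m)$ directly and reads off the minimum; otherwise the arguments coincide, and as you acknowledge, the substantive work lies in actually carrying out the type-by-type enumeration.
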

      \begin{proof}
      The proof is case by case analysis for each exceptional group. 
      We introduce
            some convenient notation.
        \begin{enumerate}
            \item Let $X(G, m)$ be the
        set of tuples $(s_0, s_1,\dots, s_l)$ which satisfy 
        \eqref{kac-co}.
        \item Let $d(s)$
        be the number of 
        roots in the root system of the centralizer of $x(s)$
        with respect to the chosen maximal torus $T$ containing $x(s)$.
        \item Let $d(G, m)$ be the integer $\min\{d(t), t\in X(G, m)\}$. 
        \end{enumerate}
        
          In the following table, for each $G$, and for $2\leq m<h$,
    we list elements $s$ in $X(G, m)$ such that $d(s)=d(G, m)$.
    For $s\in X(G, m)$ we denote by $s_i$ the $i$-th 
    coordinate in the tuple $s$.
  \begin{center}\label{table_cen}
      \begin{tabular}{| m{1cm} | m{2cm} | m{4cm} | m{2cm} |}
      \hline
      Type & Order ($m$)& Kac coordinates&Centralizer type\\
      \hline
      ${\rm G_2}$& $2$ & $(0, 1,0)$ &\ $2A_1$ \\
      \hline
      ${\rm G_2}$&$3$ &(1,0,1) &$A_1$  \\
      \hline
      ${\rm F_4}$&$2$ &(0,1,0,0,0) &$A_1+C_3$  \\
      \hline
      ${\rm F_4}$&$3$ &(0, 0, 1,0,0)&$2A_2$  \\
      \hline
      ${\rm F_4}$&$4$ &(1,0,1,0,0) & $A_1+A_2$ \\
      \hline
      ${\rm F_4}$&$6$ &$(1,0,1,0,1)$ &$2A_1$  \\
      \hline
      ${\rm E_6}$&$2$ &$(0,0,1,0,0,0,0)$ & $A_1+A_5$ \\
      \hline
      ${\rm E_6}$&3 &$(0,0,0,0,1,0,0)$ &$3A_2$  \\
      \hline
      ${\rm E_6}$&4 &$(0,1,0,0,1,0,0)$ 
       &  $2A_2+A_1$ \\
       ${\rm E_6}$&4 &(1,1,0,0,0,1,0) 
       & 
         $A_3+A_1$ \\
         ${\rm E_6}$&4 &(1,0,0,1,0,0,1) 
       & 
      $A_3+A_1$ \\
      \hline
      ${\rm E_6}$&6 &$(1,1,0,0,1,0,1)$ &$3A_1$  \\
      \hline
      ${\rm E_7}$&2 &$(0, 0,1,0,0,0,0,0)$ &$A_7$  \\
      \hline
      ${\rm E_7}$&$3$ &(0,0,0,1,0,0,0,0) &$A_2+A_5$\\
      \hline
      ${\rm E_7}$&6 &(1,0,0,0,1,0,0,1) & $2A_2+A_1$ \\
      \hline
      ${\rm E_7}$&9 &$(1,0,1,1,0,0,1,1)$ & $A_1+A_2$  \\
      ${\rm E_7}$&9&$(0, 1,0,0,1,0,1,1)$&$4A_1$\\
      \hline
      ${\rm E_8}$&2 &$(0,1,0,0,0,0,0,0,0)$ &$D_8$  \\
      \hline
      ${\rm E_8}$&3 &$(0,0,1,0,0,0,0,0,0)$ &$A_8$ \\
      \hline
      ${\rm E_8}$&5 &$(0,0,0,0,0,1,0,0,0)$ &$2A_4$  \\
      \hline
      ${\rm E_8}$&6 &$(1,0,0,0,0,1,0,0,0)$ &$A_4+A_3$  \\
      \hline
      ${\rm E_8}$&10 &$(1,0,0,0,1,0,0,1,0)$ &$2A_2+2A_1$  \\
      \hline
      ${\rm E_8}$&15 &$(1,1,0,0,1,0,1,0,1)$ &$4A_1$  \\
      \hline
    \end{tabular}
  \end{center}
  The case where $m=2$ and $G$ is of any type, we directly appeal to 
  Helgason \cite[Chapter X, no. 6, Table II, III]{Hel78}. 
  \subsection{Type ${\rm G}_2$}
  Assume that $G$ is of the type ${\rm G}_2$. Consider the case $m=3$. In this 
  case the equation \eqref{kac-co} becomes:
  $$s_0+3s_1+2s_2=3,\ (s_0, s_1, s_2)=1.$$
  The set $X(G_2, 3)$ contains two elements $(1, 0,1)$ and
  $(0,1,0)$. The corresponding centralizers are of type $A_1$ and
  $A_2$. Thus, there is a unique conjugacy class of order $3$ elements
  such that the dimension of the centralizer is $d(G, 3)$.
  \subsection{Type ${\rm F}_4$}
  Consider the case where $G$ is of type ${\rm F}_4$. In this case 
  \eqref{kac-co} is given by 
  $$s_0+2s_1+3s_2+4s_3+2s_4=m,\ (s_0, s_1, s_2, s_3, s_4)=1,$$
  where $m\in \{3,4,6\}$. 
  \subsubsection{}
  Let $s\in X(F_4, 3)$. We first note that $s_3=0$. If $s_2\neq 0$,
  then $s_2=1$ and $s_0=s_1=s_4=0$. So we have
  $(s_0, s_1, s_2, s_3, s_4)=(0,0,1,0,0)$. When $s_2=0$, then $s$ is
  either $(1, 1,0,0,0)$ or $(1,0,0,0,1)$. Thus $X(F_4, 3)$ contains
  three elements namely: $(0,0,1,0,0)$, $(1, 1,0,0,0)$ and
  $(1,0,0,0,1)$. The centralizers of the corresponding order $3$
  elements are of the type $2A_2$, $C_3$ and $B_3$ respectively.
  Thus, there exists a unique element $s\in X(F_4, 3)$ such that
  $d(s)=d(F_4, 3)$.
  
 \subsubsection{}
 Consider the case $m=4$ and in this case if $s_3\neq 0$, then
 $s_0=s_1=s_2=s_4=0$. This corresponds to the case $(0, 0, 0, 1,
 0)$. When $s_3=0$ and $s_2\neq 0$, we have $s_0=1, s_1=s_4=0$; this
 corresponds to $(1,0,1,0,0)$. Finally in the case where $s_3=s_2=0$,
 the possibilities are $(0, 1,0,0,1)$, $(2, 1, 0,0,0)$ and
 $(2, 0, 0,0, 1)$. Thus, $X(F_4, 4)$ contains: $(0, 0, 0, 1, 0)$,
 $(1,0,1,0,0)$, $(0, 1,0,0,1)$, $(2, 1, 0,0,0)$ and $(2, 0, 0,0,
 1)$. Thus the element $t=(1,0,1,0,0)$ is the unique element in
 $X(F_4, 4)$ such that $d(t)=d(F_4, 4)$

  \subsubsection{}
  Assume that $m=6$. In this case $t=(1,0,1,0,1)$ belongs to
  $X(F_4, 6)$, and centralizer of $x(t)$ is of type $2A_1$.  Let
  $s\in X(F_4,6)$ be an element such that $d(s)=d(F_4, 6)$.  Note that
  $s_2+s_3\neq 0$, since $d(s)\leq 4$.  If $s_3\neq 0$, then $s_3=1$
  and $s_2=0$; thus, we have $s_0+2s_1+2s_4=2$ and so $(2, 0, 0, 1, 0)$,
  $(0, 1, 0, 1, 0)$ and $(0, 0, 0, 1, 1)$ are the only elements in
  $X(G, 6)$ with $s_3\neq 0$. Assuming $s_3=0$, and $s_2\neq 0$, we
  have two cases $s_2=1$ and $s_2=2$. From the g.c.d condition in
  \eqref{kac-co}, we get that $s_2=1$ and $s_0+2s_1+2s_4=3$.  Thus,
  the tuples $(1, 1, 1, 0, 0)$, $(1, 0, 1, 0,1)$ and $(3, 0, 1, 0, 0)$
  are the only elements in $X(G, 6)$ with $s_3=0$ and $s_2\neq
  0$. Hence, $s=t$.
\subsection{Type ${\rm E}_6$}
We consider the case where $G$ is of the type $E_6$.
The conditions in \eqref{kac-co} are
$$s_0+s_1+2s_2+2s_3+3s_4+2s_5+s_6=m,\ (s_0, s_1,\dots, s_6)=1,$$
where $m\in \{3,4,6\}$. 
\subsubsection{}
Assume that $m=3$. Let $t=(0,0,0,0,1,0,0)$ be an
element in $X(E_6, 3)$.  Let $s\in X(E_6, 3)$ be an element
such that $d(s)=d(E_6, 3)$. Now, if $s_4\neq 0$, then
$s=(0,0,0,0,1,0,0)$. 
Any element $s\in X(E_6,3)$ with $s_4=0$ is up-to an extended
Dynkin diagram automorphism given by 
the elements $(1, 0,0,0,0,1,0)$, $(1,0,1,0,0,0,0)$
and $(1,1,0,0,0,0,1)$.  Thus $s=t$. 
\subsubsection{}
Consider the case where $m=4$.  Let $s\in X(E_6, 4)$. If $s_4\neq 0$,
then $s_4=1$, $s_2=s_3=s_5=0$ and $s_0+s_1+s_6=1$. Thus, any
$s\in X(E_6, 4)$ with $s_4\neq 0$ is an $\Omega$ translate of
$(1,0,0,0,1,0,0)$; and the centralizer of $x(s)$ is of type
$2A_2+A_1$. If $s_4=0$, then we cannot have $s_2=s_3=s_5=0$ as
$d(s)\leq 14$.  Assume that $s\in X(E_6, 4)$ with $d(s)=d(E_6,4)$,
then at least one of $s_2, s_3$ or $s_5$ is non-zero. If any two of
$s_2, s_3$ or $s_5$ are non-zero, then the centralizer of $x(s)$
contains a copy of $A_3+2A_1$, and $d(s)\geq 16$. Up to an action of
$\Omega$, we may assume that $s_2=s_3=0$ and $s_5=1$, or
$s_2=s_5=0$ and $s_3=0$
The centralizer of the elements $x(s)$ where $s=(1,1,0,0,0,1,0)$ and
$s=(1,0,0,1,0,0,1)$  are of the same type: 
$A_3+A_1$.

The root system of centralizer of $C_h^3$, with respect to $T$,  
is the subset of roots of $G$ with height divisible by 
$4$. These roots are the union of the sets
$$\{\beta_1=\alpha_1+\alpha_3+\alpha_4+\alpha_5, 
\beta_2=\alpha_6+\alpha_5+\alpha_4+\alpha_2,
\beta_1+\beta_2, -\beta_1, -\beta_2, -\beta_1-\beta_2\},$$
$$\{\gamma_1=\alpha_1+\alpha_3+\alpha_4+\alpha_2,
\gamma_2=\alpha_6+\alpha_5+\alpha_4+\alpha_3,
-\gamma_1,-\gamma_2-\gamma_1-\gamma_2\}$$ and
$\{\delta=\alpha_3+\alpha_4+\alpha_5+\alpha_2, -\delta\}$. These three
sets of roots are the connected components in the root system of type
$2A_2+A_1$.
\subsubsection{}
Consider the case $m=6$. Note that  $t=(1,1,0,0,1,0,1)$ belongs to
$X(E_6, 6)$ and the
 centralizer of $x(t)$ is of the type $3A_1$. 
 An element $s\in X(E_6, 6)$ such that
 $d(s)=d(E_6, 6)$ and $s\neq t$
 must have at least $5$ non-zero coordinates
and this is not possible.  
\subsection{Type ${\rm E}_7$}
We consider the case where $G$ is of type $E_7$ and here $m\in \{3,6,9\}$. 
Note that the conditions in \eqref{kac-co}
becomes,
$$s_0+s_7+2s_1+2s_2+2s_6+3s_3+3s_5+4s_4=m,\ 
(s_0, s_1,\dots, s_7)=1.$$
Moreover, we note that 
$\Omega=\mathbb{Z}/2\mathbb{Z}$. 
\subsubsection{}
Let $s\in X(E_7, 3)$.  When $m=3$, we note that $s_4=0$. If
$s_3\neq 0$, then $s=(0,0,0,1,0,0,0,0)$ and the centralizer of $x(s)$
is of the type $A_2+A_5$. In the case where $s_3=s_4=s_5=0$, we have
$s$, considered up to $\Omega$ action, is either $(1,1, 0,0,0,0,0,0)$,
$(1,0,1,0,0,0,0,0)$ or $(1,0,0,0,0,0,1,0)$--the corresponding
centralizers of $x(s)$ are of the type $D_6$, $A_6$ and $D_5+A_1$
respectively. Thus the centralizer with the smallest dimension is
$A_2+A_5$.
\subsubsection{}
Consider the case where $m=6$. Note that $t=(1,0,0,0,1,0,0,1)$ belongs
to $X(E_7, 6)$ and centralizer of $x(t)$ is of the type
$2A_2+A_1$. Let $s$ be a tuple in $X(E_7,6)$ such that
$d(s)=d(E_7, 6)$.  Assume that $s_4=0$. If
$|\{i: s_i\neq 0\}|>3$, then $s_0=s_7=1$ and $s_i=s_j=1$, for
some $i\neq j$ $i, j\in \{1,2,6\}$; and $d(s)\geq 20$. Thus, we get
that $|\{i:s_i=0\}|\geq 5$. If $s_3+s_5=0$, then $d(s)\geq
16$. Without loss of generality we assume that $s_3=1$. If $s_5=1$,
then $d(s)\geq 18$. Which implies that $s_5=0$ and since
$d(s)\leq 14$, we get that $s_2\neq 0$ and $s_6\neq 0$,  which is
impossible.  We conclude that $s_4\neq 0$. In this case $s_3=s_5=0$.
This implies that
  $$s_0+s_7+2s_1+2s_2+2s_6=2.$$
  From the above equation we get that $s$ is an $\Omega$ translate of
  $t$.
  \subsubsection{}
  Consider the case where $m=9$. The tuple $t=(1,0,1,1,0,0,1,1)$
  corresponds to an order $9$ element with centralizer of type
  $A_1+A_2$.  Let $s\in X(E_7, 9)$ be such that $d(s)=d(E_7, 9)$.
  Assume that $s_4\neq 0$. Since $d(s)\leq 8$, we note that
  $s_4=1$. If $s_3+s_5=0$, then $s_0+s_7$ is odd. If $s_0+s_7\geq 3$,
  then $d(s)>8$. In the case where $s_0+s_7=1$, without loss of
  generality we may assume that $s_0=0$ and $s_7=1$. This gives us
  that $s_1+s_2+s_6=2$.  Since $d(s)\leq 8$, we get that
  $s=(0, 1,0,0,1,0,1,1)$, and $x(s)$ has centralizer of type $4A_1$
  and $d(s)=8$. Assume that $s_3+s_5\neq 0$.  With out loss of
  generality, we assume that $s_3=1$ and $s_5=0$. If
  $s_1+s_2+s_6\neq 0$, then $d(s)\geq 12$. If $s_1=s_2=s_6=0$, then
  $d(s)\geq 10$. This concludes the discussion in the case where
  $s_4\neq 0$.
  
  Now, consider the case where $s_4=0$. If $d(s)=8$, then the rank of
  the root system of the centralizer of $x(s)$ is less than $4$; it is
  equal to $4$ if and only if $x(s)$ has centralizer of type
  $4A_1$. The case of $4A_1$ is clearly not possible for the Dynkin
  diagram of type $E_7$ with $s_4=0$. Thus centralizer of $x(s)$ has
  semisimple rank less than or equal to $3$. Assume that $\alpha_4$ is
  not connected to any vertex in the Dynkin diagram of the centralizer
  of $x(s)$, then $d(s)\geq 10$. Thus $4$ is connected to at least one
  more vertex in the Dynkin diagram of the centralizer of $x(s)$. If
  $\alpha_4$ and $\alpha_2$ are both roots of the centralizer of
  $x(s)$, then the fact that $d(s)\leq 8$ implies that
  $s_0+s_7+2s_1+2s_6\geq 4$, which is a contradiction to the
  assumption that $s\in X(E_7, 9)$. Thus with out loss of generality
  we may assume that $\alpha_4$ and $\alpha_5$ are both contained in
  the root system of the centralizer of $x(s)$. This implies that
  $s_1=0$ and $s_0=s_2=s_3=s_6=s_7=1$. In this case $s=t$.

  Note that the root system of the centralizer of the element $C_h^2$,
  an element of order $9$, consists of roots of height $9$. Since the
  Coxeter number of the root system ${E}_7$ is $18$, the centralizer
  of $C_h^2$ is not of type $A_1+A_2$. Thus, the centralizer of 
  $C_h^2$ is of type $4A_1$.
   \subsection{Type ${\rm E}_8$}
   Consider the case where $G$ is of type $E_8$. In this case
   $m\in \{3,5,6,10,15\}$.  The relations \eqref{kac-co} in this case
   are
  $$s_0+2s_1+2s_8+3s_2+3s_7+4s_3+4s_6+5s_5+6s_4=m$$
  and $(s_0, s_1,\dots, s_8)=1$. 
  \subsubsection{}
  We first consider the case where $m=3$. In this case, the
  centralizers of $x(s)$, for $s\in X(E_8,3)$ are of the type: $A_8$,
  $A_2+E_6$, $D_7$ and $E_7$. There exists an unique $s_0\in X(E_8,3)$
  such that $d(s_0)=d(E_8, 3)$, and the centralizer of $x(s_0)$ is of
  the type $A_2+E_6$.
\subsubsection{}
Consider the case $m=5$. The tuple $t=(0,0,0,0,0,1,0,0,0)$ belongs to
$X(E_8, 5)$. The partitions of $5$ are $(1,1,1,1,1)$, $(2, 1,1,1)$,
$(3,1,1)$, $(4,1)$, $(5)$, $(2,2,1)$ and $(3,2)$. Thus the non-zero
entries of for Kac coordinates are $5$, $(4,1)$, $(2,2,1)$ and
$(3,2)$.  If non-zero entries of $s\in X(E_8, 5)$ are $(4,1)$, then
the centralizers of $x(s)$ are of the type $A_1+A_6$, $D_7$ and
$E_7$. If non-zero entries of $s\in X(E_8, 5)$ are $(2,2,1)$, $s$ is
uniquely determined and the centralizer of $x(s)$ is of the type
$D_6$. If non-zero entries of $s\in X(E_8, 5)$ are $(2,3)$, then the
centralizers of $x(s)$ are of the type $A_7$, $D_5+A_2$, $A_6+A_1$ and
$E_6+A_1$. Hence, $t=(0,0,0,0,0,1,0,0,0)$ with centralizer $2A_4$ is
the unique element in $X(E_8, 5)$ such that $d(t)=d(E_8, 5)$.
  \subsubsection{}
  Consider the case where $m=6$. Partitions of $6$ with $1$ occurring
  at most once are as follows: $(1,5)$, $(1,2,3)$. The partitions of
  $6$ whose parts are at least $2$ are $(2,2,2)$, $(2,4)$,
  $(3,3)$. It is clear that the tuple $s\in X(E_8, 6)$ with $s_0=1$
  and $s_5=5$ and $s_i=0$, for all $i\neq 0,5$, is the unique element
  such that $d(s)=d(E_8, 6)$ and in this case the centralizer is $A_3+A_4$. 

\subsubsection{}
We consider the case where $m=15$. The tuple $t=(1,1,0,0,1,0,1,0,1)$
belongs to $X(E_8, 15)$ and the centralizer of $x(t)$ 
is of the type $4A_1$. 
For $s\in X(E_8, 15)$ we have,
$$15=\sum_{i=0}^8a_i(1-s_i)\leq \sum_{i:s_i=0}a_i.$$
Thus, we get that 
$$|\{i:0\leq i\leq 8, s_i=0\}|\geq 3,$$ 
with equality if and only if 
$$\{i:0\leq i\leq 8, s_i=0\}=\{4,5,6\}\ \text{or}\ \{3,4,5\},$$
moreover, for each $s\in X(E_8, 15)$ as above we have $d(s)\geq 12$.  Thus,
for $s\in X(E_8, 15)$ with $d(s)=d(E_8, 15)$, the semisimple rank is
at least $4$ and hence, it is of the type $4A_1$.  Let $I$ be the set
$\{i:0\leq i\leq 8, s_i=0\}$. If $4,5\not\in I$, then we get that
$\sum_{i\in I}a_i\leq 14$. Assume that $s_4=0$.  Then the set $I$ can
be one of the following $\{1,4,6,8\}$ or $\{1,4,7,0\}$, and
$\sum_{i\notin I}a_is_i\geq 16$. This implies that $s_4\neq 0$ and
$s_5=0$. In this case, the Dynkin diagram of the set $\{i: s_i= 0\}$
is $4A_1$ gives the following possibilities: $\{5,2,1,7\}$,
$\{5,2,1,8\}$, $\{5,2,1,0\}$, $\{5, 2,3,7\}$, $\{5, 2,3,8\}$,
$\{5, 2,3,0\}$, $\{5, 1,7,0\}$ and $\{5,3,7,0\}$. The condition that
$\sum_{i\in I}a_i\geq 15$ is only possible in the case where
$I=\{2,3,5,7\}$ and hence there exists a unique element
$s\in X(E_8, 15)$ such that $d(s)=d(E_8, 15)$.
\subsubsection{}
Finally, we turn to the case where $m=10$. The tuple
$t=(1,0,0,0,1,0,0,1,0)$ belongs to $X(E_8, 10)$ and the centralizer of
$x(t)$ is of the type $2A_2+2A_1$.  Let $s\in X(E_8, 10)$ such that
$d(s)=d(E_8, 10)$.

We will first obtain a contradiction to $s_4+s_5=0$. Assume that this
is indeed the case. Note that $s_3+s_6\neq 0$. If $s_3, s_6$ are both
non-zero then $s_3=s_6=1$ and there are three cases corresponding to
$s_0=2$, $s_1=2$ and $s_8=1$. The centralizers in each case are of
type $A_3+A_2+A_1$, $A_3+A_3$ and $3A_1+A_3$, clearly $d(s)>16$.
Thus, we conclude that exactly one among $s_3$ or $s_6$ is zero. Since
a copy of $A_4$ cannot be contained in the centralizer of $x(s)$, if
$s_6=0$, then $s_3, s_2, s_7$ are all non-zero. This is a
contradiction since $4s_3+3s_2+3s_7\geq 11$.  If $s_3=0$, then $s_6$,
$s_2$ and $s_1$ are non-zero. We infer that $s_0=1$. This implies that
the centralizer of $x(s)$ contains a copy of $A_3+A_2$.  Thus we get a
contradiction to the assumption that $s_4=s_5=0$.

  Assume that $s_5\neq 0$ and $s_4=0$. In this case, we have 
  $$|\{i\in \{2,7,3,6\}, s_i=0\}|\geq 3.$$ Then we can see that 
  there is a copy of $A_3+A_2$ in the centralizer and thus a
  contradiction to the assumption.
  
  Now consider the case where $s_5=0$ and $s_4\neq 0$. In this case,
  we have $s_3+s_6=0$, since if either $s_3$ or $s_6$ is non-zero then
  the centralizer of $x(s)$ contains $A_4+A_3$ or $A_4+A_2+A_1$. Note
  that $s_2+s_7$ cannot be zero, otherwise the centralizer of $x(s)$
  strictly contains $A_3+2A_1$. If $s_7=0$, then $A_4$ is contained in
  the centralizer of $x(s)$ and this is not possible. Thus $s_7\neq 0$
  and $s_2=0$, and in this case $s_1=s_8=0$ and $s_0=1$. Thus
  $s=(1, 0, 0, 0, 1, 0, 0, 1, 0)$ is the only tuple in $X(E_8, 10)$
  such that $d(s)=d(E_8, 10)$ and the centralizer of $x(s)$ of the
  type $2A_2+2A_1$.

      \end{proof}

      \vspace{4mm}

      {\bf Acknowledgment:} The authors owe to M. Reeder the
      observation that $C_m=C_h^{h/m}$, with $C_h$ the Coxeter
      conjugacy class in $\G(\C)$, may not be the only conjugacy class
      containing an element of order $m$ with minimal possible
      dimension of the centralizer group. He found that for $D_5,E_6$,
      there is one more besides $C_4$ containing an element of order
      4. This saved us the trouble of proving a wrong theorem! The
      papers \cite{Re1} and \cite{Re2} have a different focus, but
      contain results which in some cases are similar to what we do
      here. The authors thank P. Polo for discussions, for his
      reading of the earlier versions of  the paper and helpful comments,
      and for
      writing the paper \cite{Polo} completing the analysis of the
      group $\G(m)$ began here. The authors are grateful to J-P.Serre
      for his detailed
      comments on an earlier version of this paper that we shared with him,
and especially for the many (precise!) references that he provided.

\newpage

\end{document}